\newtheorem{theorem}{Theorem}[section]
 \newtheorem{lemma}[theorem]{Lemma}
 \newtheorem{proposition}[theorem]{Proposition}
 \newtheorem{corollary}[theorem]{Corollary}
 \newtheorem{definition}[theorem]{Definition}
\def\CC{{\mathbb C}}
 \def\RR{{\mathbb R}}
 \def\NN{{\mathbb N}}
 \def\SS{{\mathbb S}}
\def\SD{{\mathscr D}}
 \def\SF{{\mathscr F}}
 \def\SH{{\mathscr H}}
 \title{\bf Hilbert transform on the Dunkl-Hardy Spaces
 \thanks{1} \footnote{E-mail:
huzhuoran010@163.com[ZhuoRan Hu].}}
\author{{ZhuoRan Hu}\\
{\small Department of Mathematics, Capital Normal University}\\
{\small Beijing 100048, China}}
\begin{document}

\maketitle \setcounter{page}{1} \pagestyle{myheadings}
 \markboth{Hu}{Hilbert transform on the Dunkl-Hardy Spaces }

\begin{abstract}
For $p>p_0=\frac{2\lambda}{2\lambda+1}$ with $\lambda>0$, the Hardy space $H_{\lambda}^p(\RR_+^2)$  associated with the Dunkl transform $\mathcal{F}_\lambda$ and the Dunkl operator $D$ on the real line $\RR$, where $D_xf(x)=f'(x)+\frac{\lambda}{x}[f(x)-f(-x)]$, is the set of functions $F=u+iv$ on the upper half plane $\RR^2_+=\left\{(x, y): x\in\RR, y>0\right\}$, satisfying $\lambda$-Cauchy-Riemann equations: $ D_xu-\partial_y v=0$, $\partial_y u +D_xv=0$, and $\sup\limits_{y>0}\int_{\RR}|F(x+iy)|^p|x|^{2\lambda}dx<\infty$ in \cite{ZhongKai Li 3}. Then it is proved in \cite{hu} that the real Dunkl-Hardy Spaces $H_{\lambda}^p(\RR)$ for $\frac{1}{1+\gamma_\lambda}<p\leq1$ are Homogeneous Hardy Spaces. In this paper, we  will continue to   investigate $\lambda$-Hilbert transform on the real Dunkl-Hardy Spaces $H_{\lambda}^p(\RR)$  for $\frac{1}{1+\gamma_\lambda}<p\leq1$ with $\displaystyle{\gamma_\lambda=1/(4\lambda+2)}$ and extend the results of $\lambda$-Hilbert transform in\,\cite{ZhongKai Li 3}.

\vskip .2in
 \noindent
 {\bf 2000 MS Classification:} 42B20, 42B25, 42A38.

 \vskip .2in
 \noindent
 {\bf Key Words and Phrases:}  Hardy spaces, Dunkl transform, $\lambda$-Hilbert transform
 \end{abstract}

\setcounter{page}{1}

\section{ Introduction and preliminaries }

For $0<p<\infty$, $L_{\lambda}^p(\RR)$ is the set of measurable functions satisfying
$ \|f\|_{L_{\lambda}^p}=\Big(c_{\lambda}\int_{\RR}|f(x)|^p|x|^{2\lambda}dx\Big)^{1/p}$ $<\infty$,
$c_{\lambda}^{-1}=2^{\lambda+1/2}\Gamma(\lambda+1/2)$,
and $p=\infty$ is the usual $L^\infty(\RR)$ space.
For $\lambda\geq0$, The Dunkl operator on the line is:
$$D_xf(x)=f'(x)+\frac{\lambda}{x}[f(x)-f(-x)]$$
involving a reflection part. The associated Fourier transform for the Dunkl setting for $f\in L_{\lambda}^1(\RR)$ is given by:
\begin{eqnarray}\label{fourier}
(\SF_{\lambda}f)(\xi)=c_{\lambda}\int_{\RR}f(x)E_\lambda(-ix\xi)|x|^{2\lambda}dx,\quad
\xi\in\RR , \,\,f\in L_{\lambda}^1(\RR).
\end{eqnarray}
$E_{\lambda}(-ix\xi)$ is the Dunkl kernel
$$E_{\lambda}(iz)=j_{\lambda-1/2}(z)+\frac{iz}{2\lambda+1}j_{\lambda+1/2}(z),\ \  z\in\CC$$
where $j_{\alpha}(z)$ is the normalized Bessel function
$$j_{\alpha}(z)=2^{\alpha}\Gamma(\alpha+1)\frac{J_{\alpha}(z)}{z^{\alpha}}=\Gamma(\alpha+1)\sum_{n=0}^{\infty}\frac{(-1)^n(z/2)^{2n}}{n!\Gamma(n+\alpha+1)} .$$
Since $j_{\lambda-1/2}(z)=\cos z$, $j_{\lambda+1/2}(z)=z^{-1}\sin z$, it follows that $E_0(iz)=e^{iz}$, and $\SF_{0}$ agrees with the usual Fourier transform. We assume $\lambda>0$ in
what follows.
And the associated $\lambda$-translation in Dunkl setting is
\begin{eqnarray}\label{tau}
 \tau_y f(x)=c_\lambda
     \int_{{\RR}}(\SF_{\lambda}f)(\xi)E(ix\xi)E(iy\xi)|\xi|^{2\lambda}d\xi,
     \ \ x,y\in{\RR} .
\end{eqnarray}
The $\lambda$-convolution$(f\ast_{\lambda}g)(x)$ of two appropriate functions $f$ and $g$ on $\RR$ associated to the $\lambda$-translation $\tau_t$ is defined by
$$(f\ast_{\lambda}g)(x)=c_{\lambda}\int_{\RR}f(t)\tau_{x}g(-t)|t|^{2\lambda}dt.$$
The "Laplace Equation" associated with the Dunkl setting is given by:
$$(\triangle_{\lambda}u)(x, y)=\left(D_x^2+ \partial_y^2\right) u(x, y)=\left(\partial_x^2+ \partial_y^2\right)u+ \frac{\lambda}{x}\partial_xu-\frac{\lambda}{x^2}\left(u(x, y)-u(-x, y)\right).$$
A $C^2$ function $u(x, y)$ satisfying $\triangle_{\lambda}u=0$ is  $\lambda$-harmonic.
When u and v are $\lambda$-harmonic functions satisfying $\lambda$-Cauchy-Riemann equations:
\begin{eqnarray}\label{a c r0}
\left\{\begin{array}{ll}
                                    D_xu-\partial_y v=0,&  \\
                                    \partial_y u +D_xv=0&
                                 \end{array}\right.
\end{eqnarray}
the function F(z)=F(x,y)=u(x,y)+iv(x,y)\,(z=x+iy)\, is a $\lambda$-analytic function.
 We define the Complex-Hardy spaces $H^p_\lambda(\RR^2_+)$ to be the set of
$\lambda$-analytic functions F=u+iv on $\RR^2_+$ satisfying
$$\|F\|_{H^p_\lambda(\RR^2_+)}=\sup\limits_{y>0}\left\{c_{\lambda}\int_{\RR}|F(x+iy)|^p|x|^{2\lambda}dx \right\}^{1/p}<\infty.$$

The present paper is a successive work of  \,\cite{ZhongKai Li 3} and\, \cite{hu}. In \,\cite{ZhongKai Li 3} the Boundedness of $\lambda$-Hilbert transform is discussed  on $H^1_\lambda(\RR)$, and we will extended the result to $p<1$ in this paper.
 In section\,\ref{facts} of this paper, some necessary facts about
the harmonic analysis related to the Dunkl transform on $\RR$ are summarized, including the $\lambda$-translation, $\lambda$-Poisson integral,
Conjugate $\lambda$-Poisson integral, $\lambda$-Hilbert transform and some basic facts associated with the $\lambda$-analytic Dunkl-Hardy Spaces
$H^p_\lambda(\RR^2_+)$ and the real Dunkl-Hardy Spaces $H^p_\lambda(\RR)$. In section\,\ref{Hilbert} and section\,\ref{bounded Hilbert} of this paper, we will discuss the $\lambda$-Hilbert transform and we will obtain our $\mathbf{main\  result}$ of this paper in $\mathbf{Theorem}$\,\ref{t1} and $\mathbf{Theorem}$\,\ref{us9}, that we will show that the $\lambda$-Hilbert transform can be extended as a  bounded operator from $H^p_\lambda(\RR)$ to
$L^p_\lambda(\RR)$ with $(2\lambda)/(2\lambda+1)<p<\infty$ and from $H^p_\lambda(\RR)$ to $H^p_\lambda(\RR)$ with $1/(4\lambda+2)<p\leq 1$.

As usual, $\mathcal{B}_\lambda(\RR)$ denotes the set of Borel measures $d\mu$ on $\RR$ for which $\|d\mu\|_{\mathcal{B}_\lambda}=c_\lambda\int_\RR |x|^{2\lambda}|d\mu(x)|$ is finite.  $\SD(\RR)$ designates  the space of $C^{\infty}$ functions on $\RR$ with compact support, $C_c(\RR)$  the spaces of
 continuous function on $\RR$ with compact support, and
$\SS(\RR)$ the space of  $C^{\infty}$ functions on $\RR$ rapidly decreasing together with their derivatives.
$L_{\lambda,{\rm loc}}(\RR)$ is the set of locally integrable functions on $\RR$ associated with the measure $|x|^{2\lambda}dx$.
Throughout the paper,  we use $A\lesssim B$ to denote the estimate $|A|\leq CB$ for some  absolute universal constant $C>0$, which may vary from line to line,
$A\gtrsim B$ to denote the estimate $|A|\geq CB$ for some absolute universal constant $C>0$, $A\sim B$  to denote the estimate $|A|\leq C_1B$, $|A|\geq C_2B$ for some absolute universal constant $C_1, C_2$.

\section{Basic Facts  in  Dunkl setting on the real line $\RR$}\label{facts}

\subsection{The Dunkl Operator and the Dunkl Transform}
For sake of simplicity,  we set $|E|_{\lambda}=c_\lambda\int_E |x|^{2\lambda}dx$ for a measurable set $E\subset\RR$, and $\langle f, g\rangle_\lambda=c_\lambda\int_\RR f(x)g(x) |x|^{2\lambda}dx$ whenever the integral exists. The Dunkl operator on the real line is given by
$$Df(x)=f'(x)+\frac{\lambda}{x}[f(x)-f(-x)].$$
A direct computation shows that
$$
(D^2f)(x)=f''(x)+\frac{2\lambda}{x}f'(x)-\frac{\lambda}{x^2}
[f(x)-f(-x)].
$$
For $f\in C^1(\RR)$, an inverse operator of $D$ is given by
\begin{eqnarray}\label{D-operator-inverse-1}
f(x)=f(0)+\frac{x}{2}\left(\int_{-1}^1({\rm sgn}\,s)(Df)(sx)ds+
\int_{-1}^1(Df)(sx)|s|^{2\lambda}ds\right).
\end{eqnarray}
For $f\in \SS(\RR)$, the inverse operator of $D$ can be given by
\begin{eqnarray}\label{D-operator-inverse-2}
f(x)=\frac{1}{2}\int_{-\infty}^{\infty}[\tau_x(Df)](-t)({\rm
sgn}\,t)dt,
\end{eqnarray}
and from the Formula\,$Df(x)=f'(x)+\lambda\int_{|s|\leq1}f'(xs)ds=f'(x)-\lambda\int_{|s|\geq1}f'(xs)ds$, we could know that the operator $D$ is invariant in $\SD(\RR)$ and $\SS(\RR)$.
If $f,\phi\in \SS(\RR)$, or $f\in C^1(\RR)$ with $\phi\in \SD(\RR)$,
or $f\in C_c^1(\RR)$  with $\phi\in C^{\infty}(\RR)$, we could get
\begin{eqnarray}\label{D-operator-dual-1}
\langle Df,\phi\rangle_{\lambda}=-\langle f,D\phi\rangle_{\lambda}.
\end{eqnarray}
A Laplace-type representation of the Dunkl kernel $E_\lambda(iz)$ is given by\,(\cite{ZhongKai Li 3},\cite{Ro1})
\begin{eqnarray}\label{D-kernel-3}
E_\lambda(iz)=c_{\lambda}'\int_{-1}^1e^{izt}(1+t)(1-t^2)^{\lambda-1}dt,
\ \ \ \ \ \
c'_{\lambda}=\frac{\Gamma(\lambda+1/2)}{\Gamma(\lambda)\Gamma(1/2)},
\end{eqnarray}
and $x\mapsto E_{\lambda}(ix\xi)$ is an eigenfunction of $D_x$ with eigenvalue $i\xi$\,(\cite{Du6}), that is
\begin{eqnarray}\label{D-kernel-2}
D_x[E_{\lambda}(ix\xi)]=i\xi E_{\lambda}(ix\xi), \ \ \ \ \ \
\ \ \ E_{\lambda}(ix\xi)|_{x=0}=1.
\end{eqnarray}
From Formulas\,(\ref{D-kernel-3},\ref{D-kernel-2}), an immediately case is that $e^{-y\xi}E_{\lambda}(ix\xi)$ is a $\lambda$-analytic function for a fixed given $\xi$,\  i.e., $T_{\bar{z}}e^{-y\xi}E_{\lambda}(ix\xi)=0$.

The Dunkl transform shares many  important  properties with the usual Fourier transform, part of which are listed as follows. These conclusions extend those on the Hankel transform and are special cases on the general Dunkl transform studied in\,(\cite{dJ},\cite{Du6}).

\begin{proposition}\label{D-transform-a}

{\rm(i)} \ If $f\in L_{\lambda}^1(\RR)$, then $\SF_{\lambda}f\in
C_0(\RR)$ and $\|\SF_{\lambda}f\|_{\infty}\leq\|f\|_{L_{\lambda}^1}$.

{\rm (ii)} \ {\rm (Inversion)} \ If $f\in
L_{\lambda}^1(\RR)$ such that $\SF_{\lambda}f\in L_{\lambda}^1(\RR)$, then
$f(x)=[\SF_{\lambda}(\SF_{\lambda}f)](-x)$.

{\rm(iii)} \ For $f\in\SS(\RR)$ or $f\in C_c^1(\RR)$, we have  $[\SF_{\lambda}(Df)](\xi)=i\xi(\SF_{\lambda}f)(\xi)$, $[\SF_{\lambda}(xf)](\xi)=i[D_{\xi}(\SF_{\lambda}f)](\xi)$ for $\xi\in\RR$, and $\SF_{\lambda}$ is a topological automorphism on $\SS(\RR)$.

{\rm(iv)} \ {\rm (Product Formula)} \ For all $f,g\in L_{\lambda}^1(\RR)$, we have
$\langle \SF_{\lambda}f,g\rangle_{\lambda}=\langle
f,\SF_{\lambda}g\rangle_{\lambda}$.

{\rm (v)} \ {\rm (Plancherel)} There exists a unique extension of
$\SF_{\lambda}$ to $L_{\lambda}^2(\RR)$ with
$\|\SF_{\lambda}f\|_{L_{\lambda}^2}=\|f\|_{L_{\lambda}^2}$, and
$(\SF_{\lambda}^{-1}f)(x)=(\SF_{\lambda}f)(-x)$.
\end{proposition}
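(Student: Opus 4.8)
The plan is to handle the five assertions in order of increasing depth, using as the only nontrivial inputs the Laplace-type representation \eqref{D-kernel-3}, the eigenfunction relation \eqref{D-kernel-2}, and the integration-by-parts identity \eqref{D-operator-dual-1}. I would first dispose of (i) and (iv), which are soft. From \eqref{D-kernel-3}, since the weight $(1+t)(1-t^2)^{\lambda-1}$ is nonnegative on $[-1,1]$ (here $\lambda>0$ is used), one reads off $|E_\lambda(-ix\xi)|\le c_\lambda'\int_{-1}^1(1+t)(1-t^2)^{\lambda-1}dt=E_\lambda(0)=1$ for real $x,\xi$ by \eqref{D-kernel-2}; hence the defining integral \eqref{fourier} is dominated by $\|f\|_{L_\lambda^1}$, giving $\|\SF_\lambda f\|_\infty\le\|f\|_{L_\lambda^1}$ at once, and continuity of $\SF_\lambda f$ follows from dominated convergence. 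For (iv) I would insert the definition of $\SF_\lambda f$ into $\langle\SF_\lambda f,g\rangle_\lambda$ and apply Fubini, which is legitimate because $f,g\in L_\lambda^1(\RR)$ and $|E_\lambda|\le1$ make the double integral absolutely convergent; the symmetry $E_\lambda(-ix\xi)=E_\lambda(-i\xi x)$ then produces $\langle f,\SF_\lambda g\rangle_\lambda$.

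For (iii) the idea is to move the operators onto the kernel. For the first identity I would write $[\SF_\lambda(Df)](\xi)=\langle Df,E_\lambda(-i\,\cdot\,\xi)\rangle_\lambda$, integrate by parts via \eqref{D-operator-dual-1}, and use $D_xE_\lambda(-ix\xi)=-i\xi E_\lambda(-ix\xi)$ (which is \eqref{D-kernel-2} with $\xi$ replaced by $-\xi$) to obtain $i\xi(\SF_\lambda f)(\xi)$. For the second identity I would differentiate under the integral sign and invoke the $x\leftrightarrow\xi$ symmetry of the kernel, namely $D_\xi E_\lambda(-ix\xi)=-ix\,E_\lambda(-ix\xi)$, which gives $D_\xi(\SF_\lambda f)=-i\,\SF_\lambda(xf)$. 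Iterating these two relations converts multiplication by $\xi$ into $D$ and multiplication by $x$ into $D_\xi$, so every Schwartz seminorm $\sup_\xi|\xi^k D_\xi^m(\SF_\lambda f)(\xi)|$ is controlled, up to lower-order terms, by $\|D^k(x^m f)\|_{L_\lambda^1}$; by (i) this makes $\SF_\lambda$ continuous from $\SS(\RR)$ into itself, and its bijectivity together with continuity of the inverse then follows from the inversion formula (ii).

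The main obstacle is the inversion formula (ii), on which both the automorphism claim in (iii) and the Plancherel theorem (v) rest, since here the soft arguments no longer suffice and the Bessel structure of $E_\lambda$ must genuinely enter. My plan is a Gaussian summability argument: introduce the regularized integral
\[
I_t(x)=c_\lambda\int_\RR(\SF_\lambda f)(\xi)E_\lambda(ix\xi)\,e^{-t|\xi|^2}|\xi|^{2\lambda}d\xi,
\]
compute the Dunkl transform of the Gaussian $e^{-t|\xi|^2}$ explicitly (it is again a Gaussian, up to normalization, from the Bessel identities underlying \eqref{D-kernel-3}), and use Fubini to rewrite $I_t$ as the $\lambda$-convolution of $f$ with a Gaussian approximate identity. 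Letting $t\to0^+$ yields $[\SF_\lambda(\SF_\lambda f)](-x)=f(x)$ at Lebesgue points, and under the hypothesis $\SF_\lambda f\in L_\lambda^1(\RR)$ dominated convergence upgrades this to the stated pointwise identity. As a by-product, approximating a general $f\in L_\lambda^1(\RR)$ in norm by Schwartz functions and applying (iii) shows that $\SF_\lambda f$ decays at infinity, completing the $C_0$ claim in (i).

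Finally, for (v) I would work on the dense subspace $\SS(\RR)$. The conjugation identity $\overline{E_\lambda(-ix\xi)}=E_\lambda(ix\xi)$, visible from \eqref{D-kernel-3}, together with inversion (ii) gives $\SF_\lambda(\overline{\SF_\lambda f})=\bar f$; combining this with the product formula (iv),
\[
\|\SF_\lambda f\|_{L_\lambda^2}^2=\langle\SF_\lambda f,\overline{\SF_\lambda f}\rangle_\lambda=\langle f,\SF_\lambda(\overline{\SF_\lambda f})\rangle_\lambda=\langle f,\bar f\rangle_\lambda=\|f\|_{L_\lambda^2}^2.
\]
Since $\SS(\RR)$ is dense in $L_\lambda^2(\RR)$, the bounded-linear-transformation theorem extends $\SF_\lambda$ uniquely to an isometry of $L_\lambda^2(\RR)$, and $(\SF_\lambda^{-1}f)(x)=(\SF_\lambda f)(-x)$ is read directly off (ii). I expect the only genuinely delicate points to be the explicit Gaussian computation and the justification of Fubini in the summability step; everything else is a faithful transcription of the classical Fourier-analytic proofs into the Dunkl normalization.
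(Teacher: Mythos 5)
The paper does not prove this proposition at all: it is stated as a list of known facts imported verbatim from the literature (de Jeu's paper on the Dunkl transform and Dunkl's paper on Hankel transforms, cited just above the statement), so there is no in-paper argument to compare yours against. Judged on its own, your outline is essentially the standard proof from those references and is sound: the bound $|E_\lambda(ix\xi)|\le1$ from the Laplace representation (\ref{D-kernel-3}) gives (i) and (iv) by dominated convergence and Fubini; moving $D$ onto the kernel via (\ref{D-operator-dual-1}) and the eigenfunction relation (\ref{D-kernel-2}) gives the two identities in (iii); Gaussian summability gives inversion (ii); and Parseval on $\SS(\RR)$ plus density gives (v). Your ordering also avoids circularity (the $C_0$ claim and the automorphism claim are the only parts of (i) and (iii) that lean on (ii), and (ii) is proved independently). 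Three points would need to be fleshed out in a full write-up: (a) the hypotheses of (\ref{D-operator-dual-1}) as stated do not literally cover $f\in\SS(\RR)$ paired with the bounded, non-decaying function $x\mapsto E_\lambda(-ix\xi)$, so a cutoff or limiting argument is needed there; (b) the Schwartz seminorms involve $\partial_\xi^m$ rather than $D_\xi^m$, so you must convert between the two using $Df=f'+\lambda\int_{-1}^1f'(xs)\,ds$ before concluding that $\SF_\lambda$ is continuous on $\SS(\RR)$; and (c) the assertion that the Dunkl transform of a Gaussian is again a Gaussian is a genuine computation (via the Bessel series for $E_\lambda$ and Weber's integral), not a formality, and is the real content of the summability step — you correctly flag it as the delicate point, but it is the one place where the Bessel structure must actually be computed rather than estimated.
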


\begin{corollary}\label{Hausdorff-Young} {\rm (Hausdorff-Young)}
For $1\le p\le2$, $\SF_{\lambda}f$ exists when $f\in L_{\lambda}^p(\RR)$, and

$\|\SF_{\lambda}f\|_{L_{\lambda}^{p'}}\le\|f\|_{L_{\lambda}^p}$,
where $1/p+1/p'=1$.
\end{corollary}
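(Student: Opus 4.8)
The plan is to deduce the inequality by interpolating between the two endpoint estimates already contained in Proposition~\ref{D-transform-a}. Part~(i) supplies the bound $\|\SF_{\lambda}f\|_{L_{\lambda}^{\infty}}\le\|f\|_{L_{\lambda}^1}$, the $(1,\infty)$ endpoint with operator norm $1$, and the Plancherel identity in part~(v) supplies $\|\SF_{\lambda}f\|_{L_{\lambda}^2}=\|f\|_{L_{\lambda}^2}$, the $(2,2)$ endpoint, again with norm $1$. Both hold on the linear subspace $L_{\lambda}^1(\RR)\cap L_{\lambda}^2(\RR)$, on which the defining integral \eqref{fourier} and the $L_{\lambda}^2$-extension of $\SF_{\lambda}$ agree; this is the common domain on which I would run the interpolation.

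First I would record that $L_{\lambda}^1(\RR)\cap L_{\lambda}^2(\RR)$ is dense in $L_{\lambda}^p(\RR)$ for every $1\le p<\infty$: the simple functions supported on sets $E$ with $|E|_{\lambda}<\infty$ lie in the intersection and are dense in each $L_{\lambda}^p(\RR)$. Since the underlying measure space $(\RR,\,c_{\lambda}|x|^{2\lambda}dx)$ is $\sigma$-finite, the Riesz--Thorin interpolation theorem applies to the linear operator $\SF_{\lambda}$ on this domain. Taking endpoints $(p_0,q_0)=(1,\infty)$ and $(p_1,q_1)=(2,2)$ and a parameter $\theta\in[0,1]$, the relations $\tfrac1p=(1-\theta)+\tfrac{\theta}{2}=1-\tfrac{\theta}{2}$ and $\tfrac1q=\tfrac{\theta}{2}$ give $\tfrac1p+\tfrac1q=1$, that is $q=p'$, while $p$ sweeps out $[1,2]$ as $\theta$ runs over $[0,1]$. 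Because both endpoint norms equal $1$, the interpolated norm is bounded by $1^{1-\theta}1^{\theta}=1$, yielding $\|\SF_{\lambda}f\|_{L_{\lambda}^{p'}}\le\|f\|_{L_{\lambda}^p}$ for all $f\in L_{\lambda}^1(\RR)\cap L_{\lambda}^2(\RR)$.

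Finally I would pass to the full space by density. Given $f\in L_{\lambda}^p(\RR)$ with $1\le p\le2$, choose $f_n\in L_{\lambda}^1(\RR)\cap L_{\lambda}^2(\RR)$ with $f_n\to f$ in $L_{\lambda}^p(\RR)$; by the bound just established $(\SF_{\lambda}f_n)$ is Cauchy in $L_{\lambda}^{p'}(\RR)$, so its limit defines $\SF_{\lambda}f\in L_{\lambda}^{p'}(\RR)$ independently of the approximating sequence, and the inequality survives the limit. This simultaneously establishes existence of $\SF_{\lambda}f$ and the stated estimate. I do not anticipate a genuine difficulty here: the only points needing care are the consistency of the two definitions of $\SF_{\lambda}$ on $L_{\lambda}^1(\RR)\cap L_{\lambda}^2(\RR)$ and the availability of Riesz--Thorin for the weighted measure, both of which are routine once the Plancherel theorem of part~(v) is in place.
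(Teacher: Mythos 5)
Your argument is correct and is exactly the derivation the paper intends: the statement is presented as a corollary of Proposition~\ref{D-transform-a}, whose parts (i) and (v) supply the $(1,\infty)$ and $(2,2)$ endpoints with norm $1$, and Riesz--Thorin interpolation on the $\sigma$-finite measure space $(\RR, c_{\lambda}|x|^{2\lambda}dx)$ plus a density argument gives the stated bound. The paper offers no written proof of its own, so there is nothing further to compare.
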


\subsection{The $\lambda$-Translation and the $\lambda$-Convolution}
For $x,t,z\in\RR$, we set
$W_{\lambda}(x,t,z)=W_{\lambda}^0(x,t,z)(1-\sigma_{x,t,z}+\sigma_{z,x,t}+\sigma_{z,t,x})$,
where
$$
W_{\lambda}^0(x,t,z)=\frac{c''_{\lambda}|xtz|^{1-2\lambda}\chi_{(||x|-|t||,
|x|+|t|)}(|z|)} {[((|x|+|t|)^2-z^2)(z^2-(|x|-|t|)^2)]^{1-\lambda}},
$$
$c''_{\lambda}=2^{3/2-\lambda}\big(\Gamma(\lambda+1/2)\big)^2/[\sqrt{\pi}\,\Gamma(\lambda)]$,
and  $\sigma_{x,t,z}=\frac{x^2+t^2-z^2}{2xt}$ for $x,t\in
\RR\setminus\{0\}$,
and 0 otherwise. From\,\cite{Ro1}, we have

\begin{proposition}\label{Dunkl-kernel-P}
The Dunkl kernel $E_\lambda$ satisfies the following product formula:
\begin{eqnarray}\label{product-D-1}
E_\lambda(x\xi)E_\lambda(t\xi)=\int_{\RR}E_\lambda(z\xi)d\nu_{x,t}(z),\quad
x,t\in\RR \ ~\hbox{and} ~ \ \xi\in\CC,
\end{eqnarray}
where $\nu_{x,t}$ is a signed measure given by
$d\nu_{x,t}(z)=c_{\lambda}W_\lambda(x,t,z)|z|^{2\lambda}dz$
for $x,t\in \RR\setminus\{0\}$, $d\nu_{x,t}(z)=d\delta_x(z)$ for $t=0$,
 and $d\nu_{x,t}(z)=d\delta_t(z)$ for $x=0$.
\end{proposition}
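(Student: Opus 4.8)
The plan is to reduce the product formula to the classical product formula for normalized Bessel functions, exploiting the decomposition $E_\lambda(iz)=j_{\lambda-1/2}(z)+\tfrac{iz}{2\lambda+1}j_{\lambda+1/2}(z)$ into its even and odd parts. First I would dispose of the degenerate cases: if $x=0$ then the left side of (\ref{product-D-1}) equals $E_\lambda(0)E_\lambda(t\xi)=E_\lambda(t\xi)$ by (\ref{D-kernel-2}), which matches $\int_\RR E_\lambda(z\xi)\,d\delta_t(z)$, and symmetrically for $t=0$; so the identity holds trivially with the stated point masses. Hence I would assume $x,t\in\RR\setminus\{0\}$ for the rest of the argument.

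For $x,t\neq 0$ I would expand the product $E_\lambda(ix\xi)E_\lambda(it\xi)$ into four pieces — even$\times$even, even$\times$odd, odd$\times$even, odd$\times$odd — each a product of two normalized Bessel functions of order $\lambda-1/2$ or $\lambda+1/2$. The even$\times$even piece is handled by the Gegenbauer--Sonine product formula $j_\alpha(a)j_\alpha(b)=c_\alpha\int_0^\pi j_\alpha\!\big(\sqrt{a^2+b^2-2ab\cos\theta}\big)\sin^{2\alpha}\theta\,d\theta$ with $\alpha=\lambda-1/2$; a reassuring consistency check is that the normalizing constant $c_{\lambda-1/2}=\Gamma(\lambda+1/2)/[\Gamma(\lambda)\Gamma(1/2)]$ is exactly the constant $c_\lambda'$ appearing in the Laplace representation (\ref{D-kernel-3}). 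The odd$\times$odd piece is treated by the same formula with $\alpha=\lambda+1/2$, which converges since $\lambda>0$.

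The heart of the computation is the change of variables turning the $\theta$-integral into a $z$-integral over the support $(||x|-|t||,|x|+|t|)$. Setting $z^2=x^2+t^2-2xt\cos\theta$ gives $\cos\theta=\sigma_{x,t,z}=\tfrac{x^2+t^2-z^2}{2xt}$ and $z\,dz=xt\sin\theta\,d\theta$, whence $\sin^{2\alpha}\theta\,d\theta$ produces, after using $1-\sigma_{x,t,z}^2=\tfrac{[z^2-(x-t)^2][(x+t)^2-z^2]}{4x^2t^2}$, precisely the factor $[((|x|+|t|)^2-z^2)(z^2-(|x|-|t|)^2)]^{\lambda-1}|z|^{1-2\lambda}$ together with the remaining powers of $|xt|$; this is the weight $W_\lambda^0(x,t,z)$. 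Reassembling the even and odd contributions under the single integral $\int_\RR E_\lambda(z\xi)(\cdots)\,|z|^{2\lambda}dz$ should then reconstruct the kernel $E_\lambda(z\xi)$ and force the bracket to take the form $1-\sigma_{x,t,z}+\sigma_{z,x,t}+\sigma_{z,t,x}$, the three $\sigma$-terms recording how the even and odd parts and the sign of $x\xi$, $t\xi$, $z\xi$ recombine.

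I expect the cross terms (even$\times$odd and odd$\times$even) to be the main obstacle, since there is no symmetric Gegenbauer-type product formula for $j_{\lambda-1/2}(a)\cdot b\,j_{\lambda+1/2}(b)$ directly at hand; I would instead generate them by differentiating the even product formula in one argument, or by invoking the Bessel recurrence relations relating $j_{\lambda-1/2}$ and $j_{\lambda+1/2}$, and then carefully tracking signs to land on the asymmetric terms $\sigma_{z,x,t}$ and $\sigma_{z,t,x}$. Because all the delicate bookkeeping is in these mixed terms and in the sign conventions for $x,t,z$, an alternative — and perhaps cleaner — route is to avoid the Bessel formulas altogether and work from the Laplace representation (\ref{D-kernel-3}) directly: writing $E_\lambda(ix\xi)E_\lambda(it\xi)$ as a double integral over $[-1,1]^2$ of $e^{i\xi(xs+tu)}$ against the weights $(1+s)(1+u)[(1-s^2)(1-u^2)]^{\lambda-1}$, and then integrating out along the level sets $\{xs+tu=z\}$ to collapse the double integral into a single integral in $z$. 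This bypasses the separate cross-term analysis but shifts the difficulty onto evaluating the resulting one-dimensional fibre integral, which is a Beta/Gegenbauer-type integral that must be shown to equal $W_\lambda(x,t,z)|z|^{2\lambda}$, as in \cite{Ro1}.
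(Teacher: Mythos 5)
The paper offers no proof of this proposition at all --- it is imported verbatim from \cite{Ro1} --- so there is no internal argument to compare yours against; what you have written is, in substance, an outline of R\"osler's original proof. The skeleton is sound: the degenerate cases $x=0$ or $t=0$ are correctly dispatched via $E_\lambda(0)=1$, the even--even and odd--odd blocks do reduce to the Gegenbauer--Sonine product formula for $j_{\lambda-1/2}$ and $j_{\lambda+1/2}$ (and your consistency check that the normalizing constant equals $c_\lambda'$ is right), and the substitution $z^2=x^2+t^2-2xt\cos\theta$ does produce the weight $W^0_\lambda$. Two items separate this from a proof. First, the cross terms, which you correctly identify as the crux but leave as an expectation: the clean mechanism is the identity $j_{\lambda-1/2}'(w)=-\tfrac{w}{2\lambda+1}\,j_{\lambda+1/2}(w)$, which gives $E_\lambda(iw)=j_{\lambda-1/2}(w)-i\,j_{\lambda-1/2}'(w)$, so that $E_\lambda(ixs)E_\lambda(its)=\bigl(1-\tfrac{i}{s}\partial_x\bigr)\bigl(1-\tfrac{i}{s}\partial_t\bigr)\bigl[j_{\lambda-1/2}(xs)j_{\lambda-1/2}(ts)\bigr]$; applying this operator to the even product formula and differentiating under the integral after your change of variables is exactly what generates the three terms $-\sigma_{x,t,z}+\sigma_{z,x,t}+\sigma_{z,t,x}$. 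Your sentence that the bracket ``should then be forced'' to take that form is the entire content of the proposition and cannot remain an assertion. Second, the statement is claimed for all $\xi\in\CC$ while the Bessel product formulas are statements about real arguments; add the one-line observation that both sides of (\ref{product-D-1}) are entire in $\xi$, so the identity extends by analytic continuation once it is known on a real line. Your alternative route through the Laplace representation (\ref{D-kernel-3}) is also viable but, as you yourself note, merely relocates the same fibre-integral computation.
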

If $t\neq0$, for an appropriate function $f$ on $\RR$, the  $\lambda$-translation is given by
\begin{eqnarray}\label{tau-1}
(\tau_tf)(x)=c_{\lambda}\int_{\RR}f(z)W_\lambda(x,t,z)|z|^{2\lambda}dz;
\end{eqnarray}
and if $t=0$, $(\tau_0f)(x)=f(x)$.
If $(\tau_tf)(x)$ is taken as a function of $t$ for a given $x$,
we may set $(\tau_tf)(0)=f(t)$ as  a complement.
An unusual fact is that
$\tau_t$ is not a positive operator in general \cite{Ro1}.
If $(x, t)\neq(0,0)$, an equivalent form of $(\tau_tf)(x)$ is given by\,\cite{Ro1},
\begin{eqnarray}\label{tau-2}
(\tau_tf)(x)=c'_{\lambda}\int_0^\pi \bigg(f_e(\langle
x,t\rangle_\theta)+ f_o(\langle
x,t\rangle_\theta)\frac{x+t}{\langle
x,t\rangle_\theta}\bigg)(1+\cos\theta)\sin^{2\lambda-1}\theta
d\theta
\end{eqnarray}
where $x,t\in\RR$, $f_e(x)=(f(x)+f(-x))/2$,
$f_o(x)=(f(x)-f(-x))/2$, $\langle
x,t\rangle_\theta=\sqrt{x^2+t^2+2xt\cos\theta}$.
For two appropriated function $f$ and $g$, their $\lambda$-convolution
 $f\ast_{\lambda}g$ is defined by
\begin{eqnarray}\label{convolution-10}
(f\ast_{\lambda}
g)(x)=c_{\lambda}\int_{\RR}(\tau_xf)(-t)g(t)|t|^{2\lambda}dt.
\end{eqnarray}
The properties of $\tau$ and $\ast_{\lambda}$ are listed as follows\,\cite{ZhongKai Li 3}:

\begin{proposition}\label{tau-convolution-a} {\rm(i)} \ If $f\in L_{\lambda,{\rm loc}}(\RR)$,
then for all $x,t\in\RR$, $(\tau_tf)(x)=(\tau_xf)(t)$, and
$(\tau_t\tilde{f})(x)=(\widetilde{\tau_{-t}f})(x)$, where $\widetilde{f}(x)=f(-x)$.

{\rm(ii)}  \ For all $1\le p\le\infty$ and $f\in L_{\lambda}^p(\RR)$,
$\|\tau_tf\|_{L^p_{\lambda}}\le 4\|f\|_{L^p_{\lambda}}$ with $t\in\RR$, and for $1\leq p<\infty$, $\lim_{t\rightarrow0}\|\tau_tf-f\|_{L^p_{\lambda}}=0$.

{\rm(iii)} \ If $f\in L^p_\lambda(\RR)$, $1\leq p\leq2$ and $t\in\RR$, then
$[\SF_{\lambda}(\tau_t
f)](\xi)=E_{\lambda}(it\xi)(\SF_{\lambda}f)(\xi)$ for almost every $\xi\in\RR$.

{\rm(iv)} \ For measurable $f,g$ on $\RR$, if $\iint
|f(z)||g(x)||W_{\lambda}(x,t,z)||z|^{2\lambda}|x|^{2\lambda}dzdx$ is convergent, we have
  $\langle\tau_tf,g\rangle_{\lambda}=\langle
f,\tau_{-t}g\rangle_{\lambda}$. In particular,
$\ast_{\lambda}$ is commutative.

{\rm(v)} \ {\rm(Young inequality)} \ If $p,q,r\in[1,\infty]$ and
$1/p+1/q=1+1/r$, then for  $f\in L^p_{\lambda}(\RR)$, $g\in
L^q_{\lambda}(\RR)$, we have $\|f\ast_{\lambda}g\|_{L^r_{\lambda}}\leq
4\|f\|_{L^p_{\lambda}} \|g\|_{L^q_{\lambda}}$.

{\rm(vi)} \ Assume that $p,q,r\in[1,2]$, and $1/p+1/q=1+1/r$. Then for $f\in L^p_{\lambda}(\RR)$ and $g\in
L^q_{\lambda}(\RR)$,
$[\SF_{\lambda}(f\ast_{\lambda}g)](\xi)=(\SF_{\lambda}f)(\xi)(\SF_{\lambda}g)(\xi)$. In particular,
$\ast_{\lambda}$ is associative in $L^1_{\lambda}(\RR)$

{\rm(vii)} \ If $f\in L^1_{\lambda}(\RR)$ and $g\in\SS(\RR)$, then
$f\ast_{\lambda}g\in C^{\infty}(\RR)$.

{\rm(viii)} \ If $f,g\in L_{\lambda,{\rm loc}}(\RR)$, ${\rm
supp}\,f\subseteq\{x:\, r_1\le|x|\le r_2\}$ and ${\rm
supp}\,g\subseteq\{x:\,|x|\le r_3\}$,$r_2>r_1>0$, $r_3>0$, then
${\rm supp}\, (f\ast_{\lambda}g)\subseteq\{x:\,r_1-r_3\le|x|\le
r_2+r_3\}$.
\end{proposition}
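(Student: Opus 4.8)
The plan is to extract every one of the eight statements from two structural facts: the explicit form of the translation kernel $W_\lambda(x,t,z)$ together with the product formula of Proposition~\ref{Dunkl-kernel-P}, and the alternative representation (\ref{tau-2}) of $\tau_t$ via the even/odd splitting $f=f_e+f_o$. The purely algebraic items come first. The symmetry $(\tau_tf)(x)=(\tau_xf)(t)$ in (i) reduces to $W_\lambda(x,t,z)=W_\lambda(t,x,z)$, which one reads off directly: $W_\lambda^0$ depends symmetrically only on $|x|,|t|,|z|$, since the bracket in its denominator is the Heron expression $2(x^2t^2+t^2z^2+z^2x^2)-(x^4+t^4+z^4)$, while the factor $1-\sigma_{x,t,z}+\sigma_{z,x,t}+\sigma_{z,t,x}$ is invariant under $x\leftrightarrow t$ because $\sigma_{x,t,z}=\sigma_{t,x,z}$. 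The reflection identity $(\tau_t\tilde f)(x)=(\widetilde{\tau_{-t}f})(x)$ and the support statement (viii) are equally direct: the former follows from the parity of $W_\lambda$ under sign changes of its arguments, and the latter from the indicator $\chi_{(||x|-|t||,\,|x|+|t|)}(|z|)$ built into $W_\lambda^0$ combined with the defining integral (\ref{convolution-10}).

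For the analytic core I would begin with (ii). Because $\tau_t$ is \emph{not} a positive operator, the endpoint bounds cannot come from kernel positivity; the key quantity is the total variation $\|\nu_{x,t}\|=c_\lambda\int_\RR|W_\lambda(x,t,z)|\,|z|^{2\lambda}dz$, which I would show is at most $4$ using (\ref{tau-2}): there $|(\tau_tf)(x)|$ is bounded by $\|f\|_\infty$ times $c'_\lambda\int_0^\pi\bigl(1+|x+t|/\langle x,t\rangle_\theta\bigr)(1+\cos\theta)\sin^{2\lambda-1}\theta\,d\theta$, and controlling the odd factor $|x+t|/\langle x,t\rangle_\theta$ against the weight $(1+\cos\theta)$ gives the $L^\infty\to L^\infty$ bound with constant $4$. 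The $L^1\to L^1$ bound follows by Fubini and the same total-variation estimate applied in the first variable through the symmetry $W_\lambda(x,t,z)=W_\lambda(z,-t,x)$. Riesz--Thorin interpolation then yields $\|\tau_tf\|_{L^p_\lambda}\le 4\|f\|_{L^p_\lambda}$ for all $1\le p\le\infty$, and the continuity $\lim_{t\to0}\|\tau_tf-f\|_{L^p_\lambda}=0$ for $1\le p<\infty$ follows by density of $C_c(\RR)$ in $L^p_\lambda$, for which continuity is immediate from (\ref{tau-2}) and dominated convergence, together with the uniform bound just obtained. Young's inequality (v) is the standard consequence of (ii): the endpoint cases use the $L^p\to L^p$ bound directly, and the full range $1/p+1/q=1+1/r$ follows by Minkowski's integral inequality and interpolation, preserving the factor $4$.

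The transform identities are handled next. For (iii) I would first take $f\in\SS(\RR)$, substitute the definition of $\tau_t$ into (\ref{fourier}), apply Fubini, and use $W_\lambda(x,t,z)=W_\lambda(z,-t,x)$ to recast the inner integral as an instance of the product formula (\ref{product-D-1}) with argument $-i\xi$, giving $\SF_\lambda(\tau_tf)(\xi)=E_\lambda(it\xi)\,\SF_\lambda f(\xi)$; the extension to $f\in L^p_\lambda$ with $1\le p\le2$ then uses Corollary~\ref{Hausdorff-Young} and the density of $\SS(\RR)$. Identity (vi) follows the same pattern from the definition (\ref{convolution-10}) combined with (iii). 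The adjoint relation (iv) is obtained by Fubini on the double integral and the symmetry $W_\lambda(x,t,z)=W_\lambda(z,-t,x)$, after which commutativity of $\ast_\lambda$ drops out from (i). Finally, (vii) follows by differentiating under the integral sign in $(f\ast_\lambda g)(x)=c_\lambda\int_\RR f(t)(\tau_xg)(-t)|t|^{2\lambda}dt$: since $g\in\SS(\RR)$, the representation (\ref{tau-2}) makes $x\mapsto(\tau_xg)(-t)$ smooth with all $x$-derivatives dominated, uniformly on compacta, by fixed $L^1_\lambda$-integrable majorants against $f$.

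The main obstacle is (ii), and within it the honest control of the constant $4$. The non-positivity of $\tau_t$ forces one to bound the \emph{total variation} of the signed measure $\nu_{x,t}$ rather than a total mass that telescopes trivially, and the representation (\ref{tau-2}) is exactly what tames the dangerous factor $|x+t|/\langle x,t\rangle_\theta$, which is unbounded pointwise but integrable against $(1+\cos\theta)\sin^{2\lambda-1}\theta$. Once (ii) is in hand with its uniform bound, the remaining items are either direct readings of the explicit kernel or density-plus-Fubini arguments modeled on the classical Euclidean theory.
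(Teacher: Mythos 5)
The paper itself gives no proof of this proposition: it is imported verbatim from \cite{ZhongKai Li 3} (with the kernel estimates going back to R\"osler \cite{Ro1}), so there is no in-paper argument to compare yours against. Judged on its own, your plan follows the standard route of those references and is essentially sound: the symmetries $W_\lambda(x,t,z)=W_\lambda(t,x,z)$ and $W_\lambda(x,t,z)=W_\lambda(z,-t,x)$ that you lean on do hold (the second is indeed what makes (iii), (iv) and the $L^1\to L^1$ bound work), and (viii) does read off from the indicator in $W_\lambda^0$ via the defining integral. Two places need to be written out rather than asserted. First, the crux of (ii) is the elementary inequality $|x+t|\,(1+\cos\theta)\le 2\langle x,t\rangle_\theta$, valid for all $x,t\in\RR$ and $\theta\in[0,\pi]$ (equivalently $(u+v)(1+c)^2\le 4(u+vc)$ for $u=x^2+t^2$, $v=2xt$, $c=\cos\theta$); combined with $c'_\lambda\int_0^\pi\sin^{2\lambda-1}\theta\,d\theta=1$ and $\int_0^\pi\cos\theta\,\sin^{2\lambda-1}\theta\,d\theta=0$, it turns (\ref{tau-2}) into the uniform total-variation bound $c_\lambda\int_\RR|W_\lambda(x,t,z)|\,|z|^{2\lambda}dz\le 3\le 4$; without this inequality the phrase ``controlling the odd factor against the weight'' is not yet a proof. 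Second, in (vii) the smoothness of $x\mapsto(\tau_xg)(-t)$ \emph{across} $x=0$ is not immediate from (\ref{tau-2}), since $\langle x,t\rangle_\theta$ is only Lipschitz where it vanishes; one should note that $g_e(\sqrt{u})$ and $g_o(\sqrt{u})/\sqrt{u}$ extend to smooth functions of $u=x^2+t^2+2xt\cos\theta$ for $g\in\SS(\RR)$, or simply invoke Proposition~\ref{D-translation-a}(i) together with the majorant (\ref{D-translation-2}) to justify differentiating under the integral. With those two points supplied, your reduction of (iii)--(vi) to Fubini, the product formula (\ref{product-D-1}), density of $\SS(\RR)$, and Corollary~\ref{Hausdorff-Young} is correct.
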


\begin{proposition}(\hbox{\cite{ZhongKai Li 3}, Corollary\,2.5(i),\,Lemma\,2.7,\,Proposition\,2.8(ii)})\label{D-translation-a}

{\rm (i)} \ If $f\in\SS(\RR)$ or $\SD(\RR)$, then for fixed
$t$, the function
$x\mapsto(\tau_tf)(x)$ is also in $\SS(\RR)$ or $\SD(\RR)$, and
\begin{eqnarray}\label{D-translation-10}
 D_t(\tau_tf(x))=D_x(\tau_tf(x))=[\tau_t(Df)](x).
\end{eqnarray}
If $f\in\SS(\RR)$ and $m>0$, a pointwise estimate is given by
\begin{eqnarray}\label{D-translation-2}
\big|(\tau_t|f|)(x)\big|\le
\frac{c_m(1+x^2+t^2)^{-\lambda}}{(1+||x|-|t||^2)^m}.\end{eqnarray}

{\rm (ii)} \ If $\phi\in L_\lambda^1(\RR)$ satisfies $(\SF_{\lambda}\phi)(0)=1$ and $\phi_{\epsilon}(x)=\epsilon^{-2\lambda-1}\phi(\epsilon^{-1}x)$ for
$\epsilon>0$, then for all $f\in X= L_{\lambda}^p(\RR)$, $ 1\leq p<\infty$, or $C_0(\RR)$, $\lim_{\epsilon\rightarrow0+}\|f\ast_{\lambda}\phi_{\epsilon}-f\|_X=0$.

\end{proposition}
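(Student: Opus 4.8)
The plan is to reduce each of the three assertions to tools already available: the Dunkl-transform calculus of Propositions \ref{D-transform-a} and \ref{tau-convolution-a} handles the commutation relation, the regularity, and the approximate identity, while the genuinely kernel-dependent pointwise bound is extracted from the explicit representation \eqref{tau-2}.

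\textbf{The commutation relation and regularity in (i).} I would first prove $D_x(\tau_t f)=\tau_t(Df)$ by comparing Dunkl transforms. Proposition \ref{tau-convolution-a}(iii) gives $\SF_\lambda(\tau_t f)(\xi)=E_\lambda(it\xi)(\SF_\lambda f)(\xi)$, and Proposition \ref{D-transform-a}(iii) gives $\SF_\lambda(Df)(\xi)=i\xi(\SF_\lambda f)(\xi)$; hence both $\SF_\lambda(D_x\tau_t f)$ and $\SF_\lambda(\tau_t(Df))$ equal $i\xi E_\lambda(it\xi)(\SF_\lambda f)(\xi)$, and since $\SF_\lambda$ is an automorphism of $\SS(\RR)$ the two coincide. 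The identity $D_t(\tau_t f)=\tau_t(Df)$ then follows from the symmetry $(\tau_t f)(x)=(\tau_x f)(t)$ of Proposition \ref{tau-convolution-a}(i): rewriting $(\tau_t f)(x)$ as $(\tau_x f)(t)$, applying the commutation just proved (now with parameter $x$ and variable $t$), and using the symmetry once more. For the regularity claim I would again pass to the transform side: for fixed $t$, the Laplace representation \eqref{D-kernel-3} shows $E_\lambda(it\xi)$ is bounded by $1$ with $\partial_\xi^k E_\lambda(it\xi)$ bounded by $C|t|^k$, so $E_\lambda(it\cdot)\,\SF_\lambda f\in\SS(\RR)$ whenever $\SF_\lambda f\in\SS(\RR)$; inverting via Proposition \ref{D-transform-a}(ii) gives $\tau_t f\in\SS(\RR)$. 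When $f\in\SD(\RR)$ is supported in $\{|z|\le R\}$, formula \eqref{tau-2} shows $(\tau_t f)(x)$ only involves $f$ at points of modulus $\langle x,t\rangle_\theta\ge||x|-|t||$, so it vanishes once $||x|-|t||>R$; with smoothness this yields $\tau_t f\in\SD(\RR)$.

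\textbf{The pointwise estimate --- the main obstacle.} Inserting the even function $|f|$ into \eqref{tau-2} kills the odd part and leaves
\[
(\tau_t|f|)(x)=c'_\lambda\int_0^\pi |f|(\langle x,t\rangle_\theta)(1+\cos\theta)\sin^{2\lambda-1}\theta\,d\theta .
\]
The Schwartz bound $|f|(s)\le c_N(1+s^2)^{-N}$ reduces matters to controlling the scalar integral of $(1+\langle x,t\rangle_\theta^2)^{-N}$ against $(1+\cos\theta)\sin^{2\lambda-1}\theta\,d\theta$, where $\langle x,t\rangle_\theta^2=x^2+t^2+2xt\cos\theta$ sweeps $[(|x|-|t|)^2,(|x|+|t|)^2]$. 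The difficulty is to produce both factors at once: the decay $(1+||x|-|t||^2)^{-m}$ comes from the lower bound $\langle x,t\rangle_\theta^2\ge(|x|-|t|)^2$, but the \emph{precise} power $(1+x^2+t^2)^{-\lambda}$ is dimensional and must be read off from how the weight $\sin^{2\lambda-1}\theta$ concentrates the angular mass near the endpoints $\theta=0,\pi$. Concretely I would substitute $u=\langle x,t\rangle_\theta^2$, converting the angular measure into the kernel weight $[((|x|+|t|)^2-u)(u-(|x|-|t|)^2)]^{\lambda-1}$ on $[(|x|-|t|)^2,(|x|+|t|)^2]$, split the exponent $N$ between endpoint decay and the $(1+u)^{-\lambda}$ behaviour, and estimate the resulting one-dimensional integral separately in the regimes $|x|\sim|t|$ and $|x|\not\sim|t|$. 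This kernel computation is where the real work lies.

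\textbf{The approximate identity (ii).} Since $E_\lambda(0)=1$, the hypothesis $(\SF_\lambda\phi)(0)=1$ reads $c_\lambda\int_\RR\phi(t)|t|^{2\lambda}dt=1$, and the scaling $\phi_\epsilon(x)=\epsilon^{-2\lambda-1}\phi(\epsilon^{-1}x)$ preserves this, so $c_\lambda\int_\RR\phi_\epsilon(t)|t|^{2\lambda}dt=1$ for all $\epsilon$. Writing out \eqref{convolution-10} and using the symmetry $(\tau_x f)(-t)=(\tau_{-t}f)(x)$ of Proposition \ref{tau-convolution-a}(i),
\[
(f\ast_\lambda\phi_\epsilon)(x)-f(x)=c_\lambda\int_\RR\big[(\tau_{-t}f)(x)-f(x)\big]\phi_\epsilon(t)|t|^{2\lambda}dt .
\]
Taking the $X$-norm, applying Minkowski's integral inequality and substituting $t=\epsilon s$ yields
\[
\|f\ast_\lambda\phi_\epsilon-f\|_X\le c_\lambda\int_\RR\|\tau_{-\epsilon s}f-f\|_X\,|\phi(s)|\,|s|^{2\lambda}ds .
\]
By the strong continuity of translation in Proposition \ref{tau-convolution-a}(ii) the integrand tends to $0$ pointwise as $\epsilon\to0^+$, and it is dominated by $5\|f\|_X|\phi(s)||s|^{2\lambda}$, which is integrable by the $L^p_\lambda$-bound of Proposition \ref{tau-convolution-a}(ii); dominated convergence settles the $L^p_\lambda$ case. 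For $X=C_0(\RR)$ one first verifies $\lim_{t\to0}\|\tau_t f-f\|_\infty=0$ by approximating $f$ uniformly by $\SD(\RR)$ functions and invoking the pointwise bound of (i), and then runs the identical argument.
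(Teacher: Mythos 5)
This proposition is imported verbatim from \cite{ZhongKai Li 3} (Corollary 2.5(i), Lemma 2.7, Proposition 2.8(ii)); the present paper gives no proof of it, so there is nothing in-paper to compare against. Judged on its own, your architecture is the natural one and matches how such results are proved in the source: transform calculus for the commutation identity and the $\SS/\SD$ invariance, the explicit kernel \eqref{tau-2} for the pointwise bound, and the standard approximate-identity argument for (ii). Parts of this are fine: the $\SF_\lambda$-side proof of $D_x(\tau_tf)=\tau_t(Df)$, the symmetry trick for $D_t$, the support argument for $\SD$, and the Minkowski-plus-dominated-convergence scheme in (ii) all go through (with the small caveat that for $X=C_0(\RR)$ you still owe an argument that $\|\tau_tg-g\|_\infty\to0$ for $g\in\SD(\RR)$; the pointwise bound of (i) only gives uniform boundedness, not convergence --- the clean route is $\tau_tg(x)-g(x)=c_\lambda\int(E_\lambda(it\xi)-1)(\SF_\lambda g)(\xi)E_\lambda(ix\xi)|\xi|^{2\lambda}d\xi$ together with $|E_\lambda(it\xi)-1|\lesssim|t\xi|$).

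There are, however, two genuine problems in your treatment of the estimate \eqref{D-translation-2}, which is the substantive claim. First, $|f|$ is \emph{not} even in general ($|f(-s)|\neq|f(s)|$), so inserting it into \eqref{tau-2} does not kill the odd part. This is repairable --- one has $\bigl|(|f|)_o\bigr|\le(|f|)_e$ and the elementary bound $(1+\cos\theta)\,|x+t|/\langle x,t\rangle_\theta\le2$ (which follows from $\langle x,t\rangle_\theta^2=(x-t)^2+2xt(1+\cos\theta)$ when $xt\ge0$ and from $(x+t)^2\le\langle x,t\rangle_\theta^2$ when $xt\le0$) --- but as written the reduction is wrong. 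Second, and more seriously, the heart of the matter, namely extracting the factor $(1+x^2+t^2)^{-\lambda}$ alongside $(1+||x|-|t||^2)^{-m}$, is only announced (``this kernel computation is where the real work lies'') and never executed. It does work: writing $\langle x,t\rangle_\theta^2=A+Bu^2$ with $A=(|x|-|t|)^2$, $B=4|xt|$, $u=\cos(\theta/2)$, the angular integral becomes $\int_0^1(1+A+Bu^2)^{-N}u^{2\lambda-1}(1-u^2)^{\lambda-1}du$, and splitting at $u=1/2$ and according to whether $B\lesssim 1+A$ or $B\gtrsim 1+A$ yields the bound $(1+A)^{-m}(1+A+B)^{-\lambda}$ for $N\ge m+\lambda$, which is what is claimed since $1+A+B\sim1+x^2+t^2$. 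Without carrying out this computation the proof of \eqref{D-translation-2} is incomplete at exactly the point where the stated power $(1+x^2+t^2)^{-\lambda}$ has to be produced.
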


\subsection{The $\lambda$-Poisson Integral and its Conjugate Integral}
The following  results about  $\lambda$-Poisson Integral and Conjugate $\lambda$-Poisson  Integral are obtained in \cite{ZhongKai Li 3}.
For $1\leq p\leq\infty$, the $\lambda$-Poisson integral of $f\in L_\lambda^p(\RR)$ in the Dunkl setting is given by:
$$
(Pf)(x,y)=c_{\lambda}\int_{\RR}f(t)(\tau_xP_y)(-t)|t|^{2\lambda}dt,
\ \ \ \ \ \ \hbox{for} \ x\in\RR, \ y\in(0,\infty),
$$
where $(\tau_xP_y)(-t)$ is the $\lambda$-Poisson kernel with $P_y(x)=m_{\lambda}y(y^2+x^2)^{-\lambda-1}$,
$m_\lambda=2^{\lambda+1/2}\Gamma(\lambda+1)/\sqrt{\pi}$. Similarly,  the $\lambda$-Poisson integral for  $d\mu\in {\frak
B}_{\lambda}(\RR)$ can be given by $
(P(d\mu))(x,y)=c_{\lambda}\int_{\RR}(\tau_xP_y)(-t)|t|^{2\lambda}d\mu(t). $

\begin{proposition}\cite{ZhongKai Li 3}\label{Poisson-a} {\rm(i)} \
$\lambda$-Poisson kernel $(\tau_xP_y)(-t)$ can be represented by
\begin{eqnarray}\label{D-Poisson-ker-11}
(\tau_xP_y)(-t)=
\frac{\lambda\Gamma(\lambda+1/2)}{2^{-\lambda-1/2}\pi}\int_0^\pi\frac{y(1+{\rm
sgn}(xt)\cos\theta)
}{\big(y^2+x^2+t^2-2|xt|\cos\theta\big)^{\lambda+1}}\sin^{2\lambda-1}\theta
d\theta.
\end{eqnarray}

{\rm(ii)} \
The Dunkl transform of the function $P_y(x)$ is $(\SF_{\lambda}P_y)(\xi)=e^{-y|\xi|}$, and
$
(\tau_xP_y)(-t)=c_{\lambda}\int_{\RR}e^{-y|\xi|}E_{\lambda}(ix\xi)E_{\lambda}(-it\xi)|\xi|^{2\lambda}d\xi.
$
\end{proposition}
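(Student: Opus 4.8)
The plan is to treat the two assertions separately: I would derive (i) from the explicit angular form (\ref{tau-2}) of the $\lambda$-translation, and (ii) from a Bessel-integral evaluation combined with the translation--Fourier intertwining relation of Proposition \ref{tau-convolution-a}(iii) and Dunkl inversion. The single structural fact I would exploit throughout is that $P_y(x)=m_\lambda y(y^2+x^2)^{-\lambda-1}$ is an \emph{even} function of $x$, which annihilates the odd (reflection) part of both the translation formula and the Dunkl kernel.

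For part (i), I would apply the representation (\ref{tau-2}) directly to $(\tau_xP_y)(-t)$ with $f=P_y$. Since $P_y$ is even, its odd part $f_o$ vanishes and $f_e=P_y$, so only the first summand survives and, using $\langle -t,x\rangle_\theta^2=x^2+t^2-2xt\cos\theta$, one gets
\begin{eqnarray*}
(\tau_xP_y)(-t)=c'_\lambda m_\lambda\int_0^\pi\frac{y}{\big(y^2+x^2+t^2-2xt\cos\theta\big)^{\lambda+1}}(1+\cos\theta)\sin^{2\lambda-1}\theta\,d\theta.
\end{eqnarray*}
A short computation gives $c'_\lambda m_\lambda=\lambda\Gamma(\lambda+1/2)2^{\lambda+1/2}/\pi$, which is exactly the prefactor in the statement. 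To pass from $-2xt\cos\theta$, $(1+\cos\theta)$ to the claimed $-2|xt|\cos\theta$, $(1+{\rm sgn}(xt)\cos\theta)$, I would split on the sign of $xt$: for $xt>0$ the expressions already coincide, and for $xt<0$ the substitution $\theta\mapsto\pi-\theta$ (which fixes $\sin^{2\lambda-1}\theta$ and reverses $\cos\theta$) converts one form into the other. This settles (i) with no hard analysis.

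For part (ii), the core is the identity $(\SF_\lambda P_y)(\xi)=e^{-y|\xi|}$. Writing $E_\lambda(-ix\xi)=j_{\lambda-1/2}(x\xi)-\tfrac{ix\xi}{2\lambda+1}j_{\lambda+1/2}(x\xi)$ and using that $P_y(x)|x|^{2\lambda}$ is even, the odd term integrates to zero and the Dunkl transform collapses to the Hankel-type integral
\begin{eqnarray*}
(\SF_\lambda P_y)(\xi)=2c_\lambda m_\lambda y\int_0^\infty\frac{x^{2\lambda}}{(x^2+y^2)^{\lambda+1}}\,j_{\lambda-1/2}(x\xi)\,dx,\qquad \xi>0.
\end{eqnarray*}
After rewriting $j_{\lambda-1/2}$ via $J_{\lambda-1/2}$, I would invoke the classical Bessel integral $\int_0^\infty x^{\nu+1}(x^2+a^2)^{-\mu-1}J_\nu(bx)\,dx=\frac{a^{\nu-\mu}b^\mu}{2^\mu\Gamma(\mu+1)}K_{\nu-\mu}(ab)$ with $\nu=\lambda-1/2$, $\mu=\lambda$, $a=y$, $b=\xi$; then $\nu-\mu=-1/2$ and the reduction $K_{1/2}(z)=\sqrt{\pi/(2z)}\,e^{-z}$ turns the modified Bessel function into an exponential. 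The powers of $\xi$ cancel, and with $c_\lambda m_\lambda=\Gamma(\lambda+1)/(\sqrt\pi\,\Gamma(\lambda+1/2))$ the constants collapse to give precisely $e^{-y\xi}$; evenness in $\xi$ then yields $e^{-y|\xi|}$. The second displayed formula in (ii) follows formally: Proposition \ref{tau-convolution-a}(iii) gives $\SF_\lambda(\tau_xP_y)(\xi)=E_\lambda(ix\xi)e^{-y|\xi|}$, and since $e^{-y|\xi|}|\xi|^{2\lambda}$ is integrable, applying the inversion and Plancherel statements of Proposition \ref{D-transform-a}(ii),(v) and evaluating at $-t$ recovers $(\tau_xP_y)(-t)=c_\lambda\int_\RR e^{-y|\xi|}E_\lambda(ix\xi)E_\lambda(-it\xi)|\xi|^{2\lambda}\,d\xi$.

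The \textbf{main obstacle} is the Bessel-integral evaluation underlying $(\SF_\lambda P_y)(\xi)=e^{-y|\xi|}$: one must justify the angular reduction, verify the parameter constraints under which the $K_{\nu-\mu}$ formula applies, and track the normalizing constants so that the prefactor $c_\lambda m_\lambda$ together with the $\xi$-powers combine to the exact value $1\cdot e^{-y|\xi|}$ (indeed the choice of $m_\lambda$ is precisely what forces $(\SF_\lambda P_y)(0)=1$). Everything else---part (i) and the passage to the second integral in (ii)---is bookkeeping resting on the translation formula (\ref{tau-2}), the intertwining relation of Proposition \ref{tau-convolution-a}(iii), and Dunkl inversion.
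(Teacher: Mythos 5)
Your argument is correct, but note that the paper itself offers no proof of this proposition: it is imported verbatim from \cite{ZhongKai Li 3}, so there is no internal proof to compare against. Judged on its own terms, your derivation is the standard one and the details check out. For (i), specializing (\ref{tau-2}) to the even function $P_y$ kills $f_o$, the constant $c'_\lambda m_\lambda=\frac{\Gamma(\lambda+1/2)}{\Gamma(\lambda)\sqrt{\pi}}\cdot\frac{2^{\lambda+1/2}\Gamma(\lambda+1)}{\sqrt{\pi}}=\lambda\Gamma(\lambda+1/2)2^{\lambda+1/2}/\pi$ is exactly the stated prefactor, and the reflection $\theta\mapsto\pi-\theta$ correctly accounts for the $|xt|$ and ${\rm sgn}(xt)$ in the case $xt<0$. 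For (ii), the parity reduction to the $j_{\lambda-1/2}$ term is legitimate, the Weber--Schafheitlin/Gegenbauer-type integral you quote applies since $-1<\lambda-1/2<2\lambda+3/2$ for $\lambda>0$, and with $K_{-1/2}(z)=K_{1/2}(z)=\sqrt{\pi/(2z)}\,e^{-z}$ and $c_\lambda m_\lambda=\Gamma(\lambda+1)/(\sqrt{\pi}\,\Gamma(\lambda+1/2))$ all powers of $\xi$ and all Gamma factors cancel to give $e^{-y\xi}$. The passage to the product representation of $(\tau_xP_y)(-t)$ via Proposition \ref{tau-convolution-a}(iii) and inversion is sound; you should only add the one-line observation that $\tau_xP_y\in L^1_\lambda(\RR)$ (Proposition \ref{tau-convolution-a}(ii)) and that both sides are continuous in $t$, so the a.e.\ identity from inversion upgrades to a pointwise one. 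A common alternative route to $(\SF_\lambda P_y)(\xi)=e^{-y|\xi|}$, which avoids the Bessel--$K$ evaluation entirely, is subordination: write $e^{-y|\xi|}=\pi^{-1/2}\int_0^\infty u^{-1/2}e^{-u}e^{-y^2\xi^2/(4u)}\,du$ and use that the Dunkl transform of a Gaussian is a Gaussian; either route is acceptable.
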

Relating to $P_y(x)$, the conjugate $\lambda$-Poisson integral for $f\in L^p_{\lambda}(\RR)$ is given by:
$$
(Qf)(x,y)=c_{\lambda}\int_{\RR}f(t)(\tau_xQ_y)(-t)|t|^{2\lambda}dt,
\ \ \ \ \ \ \hbox{for} \ x\in\RR, \ y\in(0,\infty),
$$
where $(\tau_xQ_y)(-t)$ is the conjugate $\lambda$-Poisson kernel with $Q_y(x)=m_{\lambda}x(y^2+x^2)^{-\lambda-1}$.

\begin{proposition}\cite{ZhongKai Li 3}\label{conjugate-Poisson-a} {\rm(i)} \ \
The conjugate  $\lambda$-Poisson kernel $(\tau_xQ_y)(-t)$ can be represented by
\begin{eqnarray*}\label{D-conjugate-Poisson-ker-1}
(\tau_xQ_y)(-t)=
\frac{\lambda\Gamma(\lambda+1/2)}{2^{-\lambda-1/2}\pi}\int_0^\pi\frac{(x-t)(1+{\rm
sgn}(xt)\cos\theta)
}{\big(y^2+x^2+t^2-2|xt|\cos\theta\big)^{\lambda+1}}\sin^{2\lambda-1}\theta
d\theta.
\end{eqnarray*}

{\rm(ii)} \  The Dunkl transform of $Q_y(x)$ is given by  $(\SF_{\lambda}Q_y)(\xi)=-i({\rm
sgn}\,\xi)e^{-y|\xi|}$, for $\xi\neq0$, and
$
(\tau_xQ_y)(-t)=-ic_{\lambda}\int_{\RR}({\rm
sgn}\,\xi)e^{-y|\xi|}E_{\lambda}(ix\xi)E_{\lambda}(-it\xi)|\xi|^{2\lambda}d\xi.
$
\end{proposition}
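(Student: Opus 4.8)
The plan is to handle the two parts by different mechanisms: the closed integral form in (i) comes from the explicit $\lambda$-translation formula (\ref{tau-2}), while the transform identity in (ii) is extracted from the already-established fact $(\SF_\lambda P_y)(\xi)=e^{-y|\xi|}$ (Proposition \ref{Poisson-a}(ii)) together with the Cauchy--Riemann coupling between $P_y$ and $Q_y$. Throughout I would exploit the parity of the kernels: $P_y(x)=m_\lambda y(y^2+x^2)^{-\lambda-1}$ is even in $x$, whereas $Q_y(x)=m_\lambda x(y^2+x^2)^{-\lambda-1}$ is odd.

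For part (i), I would start from the oddness of $Q_y$, so that $(Q_y)_e=0$ and $(Q_y)_o=Q_y$. Applying (\ref{tau-2}) with translation parameter $x$ and argument $-t$, so that $\langle -t,x\rangle_\theta=\sqrt{x^2+t^2-2xt\cos\theta}$, only the odd term survives. Writing $s=\langle -t,x\rangle_\theta$ and using $Q_y(s)\,\frac{x-t}{s}=m_\lambda(x-t)(y^2+s^2)^{-\lambda-1}$ with $y^2+s^2=y^2+x^2+t^2-2xt\cos\theta$, one obtains exactly the stated integrand in the case $xt>0$, where $2xt=2|xt|$ and the weight is $(1+\cos\theta)=(1+{\rm sgn}(xt)\cos\theta)$. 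When $xt<0$ the denominator carries $+2|xt|\cos\theta$ and the weight $(1+\cos\theta)$; the substitution $\theta\mapsto\pi-\theta$ then converts these into $-2|xt|\cos\theta$ and $(1-\cos\theta)=(1+{\rm sgn}(xt)\cos\theta)$, so both cases merge into the single displayed formula. A short constant check, $c'_\lambda m_\lambda=\lambda\Gamma(\lambda+1/2)2^{\lambda+1/2}/\pi$, produces the prefactor $\lambda\Gamma(\lambda+1/2)/(2^{-\lambda-1/2}\pi)$.

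For part (ii), I would first verify by direct differentiation the pointwise identity $\partial_y P_y=-D_xQ_y$ (the second $\lambda$-Cauchy--Riemann equation with $u=P_y$, $v=Q_y$): both $\partial_y P_y$ and $-D_xQ_y$ reduce to $m_\lambda\big(x^2-(2\lambda+1)y^2\big)(y^2+x^2)^{-\lambda-2}$, using that $Q_y$ is odd so that $D_xQ_y=Q_y'+\frac{2\lambda}{x}Q_y$. Taking the Dunkl transform in $x$, Proposition \ref{D-transform-a}(iii) gives $\SF_\lambda(D_xQ_y)(\xi)=i\xi(\SF_\lambda Q_y)(\xi)$, while $\SF_\lambda(\partial_yP_y)(\xi)=\partial_y e^{-y|\xi|}=-|\xi|e^{-y|\xi|}$; equating yields $-|\xi|e^{-y|\xi|}=-i\xi(\SF_\lambda Q_y)(\xi)$, hence $(\SF_\lambda Q_y)(\xi)=\frac{|\xi|}{i\xi}e^{-y|\xi|}=-i\,{\rm sgn}(\xi)e^{-y|\xi|}$ for $\xi\neq0$. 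The attractive feature here is that the relation is algebraic in $\xi$, so no constant of integration in $y$ arises. Finally, the integral representation follows from the transform-side description of the $\lambda$-translation: by Proposition \ref{tau-convolution-a}(iii) together with Dunkl inversion (Proposition \ref{D-transform-a}(ii),(v)), $(\tau_xQ_y)(-t)=c_\lambda\int_\RR(\SF_\lambda Q_y)(\xi)E_\lambda(ix\xi)E_\lambda(-it\xi)|\xi|^{2\lambda}d\xi$, into which the computed transform $-i\,{\rm sgn}(\xi)e^{-y|\xi|}$ is substituted.

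The main obstacle is the rigorous justification of the transform manipulations in (ii): since $Q_y(x)$ decays only like $|x|^{-2\lambda-1}$, one has $Q_y\notin L^1_\lambda(\RR)$, whereas Proposition \ref{D-transform-a}(iii) is stated for $\SS(\RR)$ or $C_c^1(\RR)$. I would remedy this by working in $L^2_\lambda(\RR)$ — one checks directly that $Q_y$ and $D_xQ_y$ both lie in $L^2_\lambda(\RR)$ — and extending the operator identity $\SF_\lambda(Df)=i\xi\,\SF_\lambda f$ to this class via the duality relation (\ref{D-operator-dual-1}) and a density argument, with the translation--transform formula likewise justified through Plancherel (Proposition \ref{D-transform-a}(v)). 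By comparison, the $\theta\mapsto\pi-\theta$ sign reduction in part (i) is elementary, though it is the step most prone to computational slips.
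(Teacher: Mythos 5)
The paper gives no proof of this proposition: it is quoted verbatim from the reference [Li--Liao, \emph{J. Approx.} 37 (2013)], so there is no internal argument to compare yours against. Judged on its own terms, your derivation is correct. Part (i) is exactly the natural computation: oddness of $Q_y$ kills the even term in (\ref{tau-2}), the identity $Q_y(s)/s=m_\lambda(y^2+s^2)^{-\lambda-1}$ produces the displayed integrand, the $\theta\mapsto\pi-\theta$ reflection correctly unifies the signs $xt>0$ and $xt<0$ into the single ${\rm sgn}(xt)$ formula, and the constant $c'_\lambda m_\lambda=\lambda\Gamma(\lambda+1/2)2^{\lambda+1/2}/\pi$ checks out. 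Part (ii) via the pointwise Cauchy--Riemann identity $\partial_yP_y=-D_xQ_y$ (both sides equal $m_\lambda(x^2-(2\lambda+1)y^2)(y^2+x^2)^{-\lambda-2}$) is a clean and arguably more conceptual route than the direct Hankel-transform computation one would expect in the source; it leverages the already-known transform of $P_y$ and avoids evaluating any new oscillatory integral. You correctly identify the only real technical debt: $Q_y\notin L^1_\lambda(\RR)$, so both $\SF_\lambda(D_xQ_y)=i\xi\,\SF_\lambda Q_y$ and the transform-side formula for $(\tau_xQ_y)(-t)$ must be justified in the $L^2_\lambda$ setting (Plancherel, the duality (\ref{D-operator-dual-1}) with $\phi\in\SD(\RR)$, and absolute convergence of the final integral since $e^{-y|\xi|}|\xi|^{2\lambda}\in L^1$); your outline of that extension is standard and workable, and the interchange $\SF_\lambda(\partial_yP_y)=\partial_y e^{-y|\xi|}$ is covered by dominated convergence because $\partial_yP_y\in L^1_\lambda(\RR)$. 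I see no gap, only routine details left to fill in.
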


Then we can define the associated maximal functions as
$$(P^*_{\nabla}f)(x)=\sup_{|s-x|<y}|(Pf)(s,y)|,\ \ \
(P^*f)(x)=\sup_{y>0}|(Pf)(x,y)|,$$
$$(Q^*_{\nabla}f)(x)=\sup_{|s-x|<y}|(Qf)(s,y)|,\ \ \ (Q^*f)(x)=\sup_{y>0}|(Qf)(x,y)|.$$

\begin{proposition}\cite{ZhongKai Li 3}\label{Poisson-conjugate-CR}
{\rm (i)}    For $f\in L^p_{\lambda}(\RR)$, $1\le p<\infty$,  $u(x,y)=(Pf)(x,y)$ and  $v(x,y)=(Qf)(x,y)$
on $\RR^2_+$ satisfy the $\lambda$-Cauchy-Riemann equations\,(\ref{a c r0}), and are both $\lambda$-harmonic on $\RR^2_+$.\\
{\rm (ii)}\rm(semi-group property)  If $f\in L^p_{\lambda}(\RR)$, $1\leq p\leq \infty$, and $y_0>0$, then
$(Pf)(x,y_0+y)=P[(Pf)(\cdot,y_0)](x,y), \ \hbox{for} \ y>0.$\\
{\rm (iii)}  If $1 < p < \infty$, then there exists some constant $A_p'$ for any $f \in
L^p_{\lambda}(\RR)$, $\|(Q^*_{\nabla})f\|_{L^p_{\lambda}} \le A_p'
\|f\|_{L^p_{\lambda}}$.\\
{\rm (iv)}   $P^*_{\nabla}$ and $P^*$ are both $(p,p)$ type for $1<p
\leq\infty$ and weak-$(1,1)$ type.\\
{\rm (v)}    If $d\mu\in {\frak B}_{\lambda}(\RR)$, then
$\|(P(d\mu))(\cdot,y)\|_{L_{\lambda}^1}\le\|d\mu\|_{{\frak
B}_{\lambda}}$ as $y\rightarrow0+$, $[P(d\mu)](\cdot,y)$
converges $*$-weakly to $d\mu$: If $f\in X=L_{\lambda}^p(\RR)$, $1\le
p<\infty$, or $C_0(\RR)$, then $\|(Pf)(\cdot,y)\|_X\le\|f\|_X$ and
$\lim_{y\rightarrow0+}\|(Pf)(\cdot,y)-f\|_{X}=0$.\\
{\rm (vi)}  If $f\in L^p_{\lambda}(\RR)$, $1\le p\le2$, and
$[\SF_{\lambda}(Pf(\cdot,y))](\xi)=e^{-y|\xi|}(\SF_{\lambda}f)(\xi)$,
and
\begin{eqnarray}\label{D-Poisson-2}
(Pf)(x,y)=c_{\lambda}\int_{\RR}e^{-y|\xi|}(\SF_{\lambda}f)(\xi)E_{\lambda}(ix\xi)|\xi|^{2\lambda}d\xi,\quad
(x,y)\in\RR^2_+,
\end{eqnarray}
furthermore, Formula\,(\ref{D-Poisson-2}) is true when we replace $f\in L^p_{\lambda}(\RR)$ with $d\mu\in {\frak
B}_{\lambda}(\RR)$.
\end{proposition}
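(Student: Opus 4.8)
The plan is to treat the six assertions in their natural logical order, building everything on the Fourier--analytic description of the Poisson and conjugate Poisson kernels from Propositions~\ref{Poisson-a}(ii) and~\ref{conjugate-Poisson-a}(ii). For the Fourier identity (vi) I would start from $(Pf)(\cdot,y)=f\ast_{\lambda}P_{y}$ and apply the convolution theorem, Proposition~\ref{tau-convolution-a}(vi), together with $(\SF_{\lambda}P_{y})(\xi)=e^{-y|\xi|}$ to get $\SF_{\lambda}[(Pf)(\cdot,y)](\xi)=e^{-y|\xi|}(\SF_{\lambda}f)(\xi)$; inverting via Proposition~\ref{D-transform-a}(ii) yields the explicit formula~(\ref{D-Poisson-2}), and the extension to $d\mu\in\mathcal{B}_{\lambda}(\RR)$ follows from the same computation since $P(d\mu)(\cdot,y)=P_{y}\ast_{\lambda}d\mu$ still has an integrable kernel. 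The semigroup property (ii) then reduces to the kernel identity $P_{y_{0}}\ast_{\lambda}P_{y}=P_{y_{0}+y}$, which on the transform side is merely $e^{-y_{0}|\xi|}e^{-y|\xi|}=e^{-(y_{0}+y)|\xi|}$; associativity of $\ast_{\lambda}$ (Proposition~\ref{tau-convolution-a}(vi)) converts this into the stated formula.

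For the Cauchy-Riemann system (i) I would differentiate the kernel representations under the integral sign. Writing $T_{\bar z}=\tfrac12(D_{x}+i\partial_{y})$, so that $T_{\bar z}F=0$ is equivalent to~(\ref{a c r0}), the key input is the eigenfunction relation~(\ref{D-kernel-2}), $D_{x}E_{\lambda}(ix\xi)=i\xi E_{\lambda}(ix\xi)$, combined with $\partial_{y}e^{-y|\xi|}=-|\xi|e^{-y|\xi|}$ and $\operatorname{sgn}(\xi)|\xi|=\xi$. Applying $D_{x}$ to $(\tau_{x}P_{y})(-t)$ and $\partial_{y}$ to $(\tau_{x}Q_{y})(-t)$ and comparing integrands shows that the kernels themselves solve~(\ref{a c r0}); the same cancellation (or, equivalently, the remark that $e^{-y\xi}E_{\lambda}(ix\xi)$ is $\lambda$-analytic for $\xi>0$) gives $T_{\bar z}[(Pf)+i(Qf)]=0$, whence both are $\lambda$-harmonic. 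Justifying the differentiation under the integral needs only the rapid decay of the kernels encoded in~(\ref{D-translation-2}), which I would use to dominate the differentiated integrands.

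The boundedness statements (iv) and (v) I would obtain from the positivity and mass-one normalisation of the Poisson kernel: from $(\SF_{\lambda}P_{y})(0)=1$ and $(\tau_{x}P_{y})(-t)\ge0$, visible in~(\ref{D-Poisson-ker-11}), one gets $\|P_{y}\|_{L^{1}_{\lambda}}=1$, so that the convolution with this probability kernel is a contraction, $\|(Pf)(\cdot,y)\|_{X}\le\|f\|_{X}$, and the $L^{1}$ bound for measures is identical. The $(p,p)$ and weak-$(1,1)$ bounds for $P^{*}$ and $P^{*}_{\nabla}$ I would derive by dominating $P^{*}_{\nabla}f$ pointwise by the Dunkl Hardy--Littlewood maximal function $M_{\lambda}f$, using the decay of $P_{y}$ and the doubling of $|x|^{2\lambda}dx$, and then invoking the known bounds for $M_{\lambda}$; the norm convergence $\|(Pf)(\cdot,y)-f\|_{X}\to0$ and the $*$-weak convergence to $d\mu$ both follow from the approximate-identity result Proposition~\ref{D-translation-a}(ii) applied with $\phi=P_{1}$.

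The genuinely hard part is the maximal conjugate estimate (iii). Unlike $P_{y}$, the kernel $Q_{y}$ has neither a sign nor finite mass, so it cannot be dominated by $M_{\lambda}$; the operator behaves like a maximal singular integral. My plan is to factor $(Qf)(\cdot,y)=(P(Hf))(\cdot,y)$ through the $\lambda$-Hilbert transform $H$ (whose multiplier is $-i\operatorname{sgn}\xi$, matching Proposition~\ref{conjugate-Poisson-a}(ii)), so that $Q^{*}_{\nabla}f\le P^{*}_{\nabla}(Hf)$ pointwise, and then combine the $L^{p}$ bound for $P^{*}_{\nabla}$ from (iv) with the $L^{p}$-boundedness of $H$ for $1<p<\infty$. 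Establishing that bound for $H$ --- equivalently, a Calder\'on--Zygmund or Cotlar-type control of the maximal truncations in the Dunkl setting --- is where the real effort lies; an alternative I would keep in reserve is a Littlewood--Paley square-function argument exploiting the subharmonicity of $|F|^{p}$ for the conjugate-harmonic pair $(Pf,Qf)$ already produced in part (i).
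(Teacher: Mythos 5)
This proposition is one of the background results that the paper imports verbatim from \cite{ZhongKai Li 3}; no proof of it appears anywhere in the present manuscript, so there is no internal argument to compare yours against. Judged on its own terms, your outline of (i), (ii), (iv), (v) and (vi) follows the standard development in the cited source and is essentially sound: the transform-side identities $(\SF_{\lambda}P_y)(\xi)=e^{-y|\xi|}$ and $(\SF_{\lambda}Q_y)(\xi)=-i(\operatorname{sgn}\xi)e^{-y|\xi|}$ do yield (vi) and the semigroup law, the nonnegativity and unit mass of $(\tau_xP_y)(-t)$ (visible in~(\ref{D-Poisson-ker-11})) give the contraction and approximate-identity statements, and domination of $P^{*}_{\nabla}$ by the Hardy--Littlewood maximal function for the doubling measure $|x|^{2\lambda}dx$ is the right mechanism for (iv). Two small cautions: the Fourier representation~(\ref{D-Poisson-2}) is only available for $1\le p\le 2$, so for (i) with $2<p<\infty$ you must run the kernel-level differentiation (which you do mention), and the decay bound~(\ref{D-translation-2}) is stated for Schwartz functions, so for $P_y$ and $Q_y$ you need the explicit polynomial decay of the translated kernels instead.

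The genuine gap is (iii). Your reduction $Q^{*}_{\nabla}f=P^{*}_{\nabla}(\SH_{\lambda}f)$ presupposes both that $\SH_{\lambda}f$ exists in $L^{p}_{\lambda}(\RR)$ and that $(Qf)(\cdot,y)=P(\SH_{\lambda}f)(\cdot,y)$; but in the logical order of \cite{ZhongKai Li 3} these facts (Proposition~\ref{u1} here) are themselves consequences of the uniform-in-$y$ estimate $\|(Qf)(\cdot,y)\|_{L^{p}_{\lambda}}\le A_{p}\|f\|_{L^{p}_{\lambda}}$, i.e.\ of the M.~Riesz theorem in the Dunkl setting. So the factorization does not reduce (iii) to (iv); it reduces it to the one statement you have deferred, and everything of substance in (iii) lives there. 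Your ``reserve'' plan is in fact the main road: one forms $F=Pf+iQf$, uses the $\lambda$-subharmonicity of $|F|^{p_0}$ and the harmonic majorization of Theorem~\ref{majorization-2} to get the uniform $L^{p}_{\lambda}$ bound on $(Qf)(\cdot,y)$ for $1<p\le2$, and passes to $2<p<\infty$ by duality; only then do the boundary function $\SH_{\lambda}f$, the identity $Qf=P(\SH_{\lambda}f)$, and hence the maximal estimate via (iv) become available. As written, your proposal for (iii) is a correct reduction with the decisive step missing.
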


\subsection{The Theory of the Hardy space $H_{\lambda}^p(\RR_+^2)$ }
 The main results about the Hardy spaces $H_{\lambda}^p(\RR_+^2)$ obtained in\,\cite{ZhongKai Li 3} are listed as the following  Theorems\,\ref{majorization-2}--\ref{analytic-character-1} in this section.

\begin{theorem}\cite{ZhongKai Li 3} \label{majorization-2} {\rm ($\lambda$-harmonic majorization)}
For $F\in H^p_\lambda(\RR^2_+)$ and $p_0\le
p<\infty$ with $p_0=\frac{2\lambda}{2\lambda+1}$, there exists a nonnegative function $g\in
L^{p/p_0}_{\lambda}(\RR)$ such that for $(x, y)\in\RR^2_+$
\begin{align*}
& |F(x,y)|^{p_0}\leq (Pg)(x,y), \\
& \|F\|^{p_0}_{H^{p}_{\lambda}}\le\|g\|_{L^{p/p_0}_{\lambda}} \le
c_p\|F\|^{p_0}_{H^{p}_{\lambda}},
\end{align*}
where $c_p=2^{1-p_0+p_0/p}$ for $p_0\le p<1$ and $c_p=2$ for $p\ge1$.
\end{theorem}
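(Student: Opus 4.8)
The plan is to reduce everything to a single analytic fact: that the exponent $p_0=\frac{2\lambda}{2\lambda+1}$ is precisely the threshold for which $|F|^{p_0}$ is $\lambda$-subharmonic on $\RR^2_+$, i.e. $\triangle_{\lambda}(|F|^{p_0})\ge 0$ in the region where $F\neq0$ (with the singularities on the zero set removable in the distributional sense). Granting this, both the pointwise majorization and the two-sided norm bound follow from the Poisson theory already recorded in Proposition \ref{Poisson-conjugate-CR} together with a weak-compactness argument. So the first and hardest task is the subharmonicity estimate; the rest is soft.

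For the subharmonicity I would fix a point with $F\neq0$, write $G=|F|^2=u^2+v^2$, and expand $\triangle_{\lambda}(G^{s/2})$ with $s=p_0$ by splitting $\triangle_{\lambda}=\Delta_{e}+\frac{2\lambda}{x}\partial_x-\frac{\lambda}{x^2}R$, where $\Delta_e=\partial_x^2+\partial_y^2$ and $(Rw)(x,y)=w(x,y)-w(-x,y)$. The Euclidean part produces the classical expression $\tfrac{s}{2}G^{s/2-1}\Delta_e G+\tfrac{s}{2}(\tfrac{s}{2}-1)G^{s/2-2}|\nabla_e G|^2$; since $s<2$ the second term is negative, and one controls it against the first by the gradient inequality coming from the $\lambda$-Cauchy-Riemann equations $D_xu=\partial_yv$, $\partial_yu=-D_xv$. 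The decisive point is that this inequality is sharp exactly when $s\ge (d-1)/d$ with homogeneous dimension $d=2\lambda+1$, which is why $s=p_0$ is admissible. The genuine obstacle is the nonlocal reflection term $-\frac{\lambda}{x^2}(G(x,y)^{s/2}-G(-x,y)^{s/2})$ together with the drift $\frac{2\lambda}{x}\partial_x(G^{s/2})$: unlike the Bessel (Hankel) case these do not cancel pointwise, and I would combine them using the Dunkl product rule $D(fg)=(Df)g+f(Dg)-\frac{\lambda}{x}[f(x)-f(-x)][g(x)-g(-x)]$ applied to $u$ and $v$, together with the convexity of $t\mapsto t^{s/2}$, to show the net contribution is nonnegative. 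I expect this reflection bookkeeping to be the main technical burden and the source of any loss of constants.

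With subharmonicity in hand, fix $y_0>0$. Since $\sup_y\|F(\cdot,y)\|_{L^p_\lambda}<\infty$, the slice $h_{y_0}:=|F(\cdot,y_0)|^{p_0}$ lies in $L^{q}_\lambda$ with $q=p/p_0\ge 1$, and the $\lambda$-subharmonicity, via the maximum principle for $\lambda$-subharmonic functions (with the semigroup property, Proposition \ref{Poisson-conjugate-CR}(ii)), gives the domination $|F(x,y+y_0)|^{p_0}\le P[h_{y_0}](x,y)$. The family $\{h_{y_0}\}_{y_0>0}$ is bounded in $L^q_\lambda$; for $q>1$ I extract, by reflexivity, a weak limit $g\ge0$ along a sequence $y_0\to0+$, while for the endpoint $q=1$ ($p=p_0$) I pass to a weak-$*$ limiting measure and must identify its density, which is where a factor $2$ (from symmetrizing in $x\mapsto-x$) enters the constant. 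Letting $y_0\to0+$ in the domination, and using that $t\mapsto(\tau_xP_y)(-t)$ lies in $L^{q'}_\lambda$ so that $P[h_{y_0}]\to Pg$ pointwise, yields $|F(x,y)|^{p_0}\le (Pg)(x,y)$ on $\RR^2_+$.

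Finally the norm inequalities. Applying the $L^{q}_\lambda$-contraction of the Poisson integral (Proposition \ref{Poisson-conjugate-CR}(v)) to $|F(x,y)|^{p_0}\le(Pg)(x,y)$ gives $\|F(\cdot,y)\|_{L^p_\lambda}^{p_0}=\big\||F(\cdot,y)|^{p_0}\big\|_{L^q_\lambda}\le\|(Pg)(\cdot,y)\|_{L^q_\lambda}\le\|g\|_{L^q_\lambda}$, and taking the supremum over $y$ produces the left-hand bound $\|F\|_{H^p_\lambda}^{p_0}\le\|g\|_{L^{p/p_0}_\lambda}$. The right-hand bound $\|g\|_{L^{p/p_0}_\lambda}\le c_p\|F\|_{H^p_\lambda}^{p_0}$ follows from the lower semicontinuity of the $L^q_\lambda$-norm under weak convergence, bounding $\|g\|_{L^q_\lambda}$ against $\liminf_{y_0\to0}\|h_{y_0}\|_{L^q_\lambda}\le\|F\|_{H^p_\lambda}^{p_0}$, with the stated constant $c_p=2^{1-p_0+p_0/p}$ for $p_0\le p<1$ and $c_p=2$ for $p\ge1$ tracking the factor introduced at the endpoint/symmetrization step and the quasi-norm arithmetic for $p<1$.
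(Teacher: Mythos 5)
This theorem is not proved in the present paper at all --- it is imported from \cite{ZhongKai Li 3} --- so your attempt has to be measured against the proof there. Your overall architecture does coincide with that source's (and with the classical Stein--Weiss/Muckenhoupt--Stein template): prove that $|F|^{p_0}$ is $\lambda$-subharmonic, deduce the domination $|F(x,y+y_0)|^{p_0}\le P\bigl[|F(\cdot,y_0)|^{p_0}\bigr](x,y)$, extract $g$ as a weak (or weak-$*$) limit of the boundary slices, and close with the $L^{p/p_0}_\lambda$-contractivity of $P$ and lower semicontinuity of the norm. Those soft steps are essentially right, modulo the standard point that the half-plane maximum principle needs the growth bound $|F(x,y)|\lesssim y^{-(2\lambda+1)/p}\|F\|_{H^p_\lambda}$ (Theorem \ref{analytic-character-1}(iv)) to exclude mass at infinity, and the observation that your construction as written would yield constant $1$ rather than $c_p$, so the stated constant is asserted rather than derived.

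The genuine gap is the subharmonicity lemma, which is the entire analytic content of the theorem and which you only sketch --- and the sketch's bookkeeping is misallocated in a way that would not survive the computation. First, you attribute the threshold $p_0=(d-1)/d$ to a gradient inequality for the \emph{Euclidean} part of $\triangle_\lambda$; but for the plain two-dimensional Cauchy--Riemann system $|F|^{s}$ is subharmonic for every $s>0$ (there $\nabla u\perp\nabla v$ with $|\nabla u|=|\nabla v|$), so the threshold cannot be produced by the Euclidean block alone: it arises from the drift $\frac{2\lambda}{x}\partial_x$ interacting with the reflection corrections hidden inside $D_x$ in the $\lambda$-Cauchy--Riemann equations, and $\nabla u,\nabla v$ are no longer orthogonal of equal length, so the ``classical expression'' cannot be closed separately. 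Second, the claim that the drift-plus-reflection contribution is nonnegative by convexity fails: absorbing the drift via $\bigl(\Delta_e+\tfrac{2\lambda}{x}\partial_x\bigr)u=\tfrac{\lambda}{x^2}\bigl(u(x,y)-u(-x,y)\bigr)$, the nonlocal part of $\triangle_\lambda(|F|^{s})$ reduces, after Cauchy--Schwarz with $a=|F(x,y)|$, $b=|F(-x,y)|$, to $\tfrac{\lambda}{x^2}\bigl[s\,a^{s-1}(a-b)-a^{s}+b^{s}\bigr]$, and for $s<1$ the concavity of $t\mapsto t^{s}$ gives this the \emph{wrong} sign (it is $\le 0$, with equality only at $a=b$). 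The nonlocal terms are genuinely not sign-definite and must be absorbed into the positive gradient terms; carrying out that absorption, and seeing that it succeeds exactly for $s\ge\frac{2\lambda}{2\lambda+1}$, is the delicate computation done in \cite{ZhongKai Li 3}. Until it is done, the proof is not complete.
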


\begin{theorem}\cite{ZhongKai Li 3} \label{ss}
For $p\ge p_0 =\frac{2\lambda}{2\lambda+1}$ and $F\in H_{\lambda}^p(\RR^2_+)$. Then\\
{\rm (i)} \  For a.e. $x\in\RR$, $\lim F(t,y)=F(x)$ exists as $(t,y)$
tends to $(x,0)$ nontangentially.\\
{\rm (ii)} \
If $F=0$ in a subset  $E\subset\RR$ with $|E|_{\lambda}>0$ symmetric about $x=0$, then $F\equiv0$.\\
{\rm (iii)} \  If $p>p_0$, then
$\lim_{y\rightarrow0+}\|F(\cdot,y)-F\|_{L^p_{\lambda}}=0$,
and $\|F\|_{H^{p}_{\lambda}}\approx\left(c_{\lambda}\int_{\RR}|F(x)|^{p}|x|^{2\lambda}dx\right)^{1/p}$.\\
{\rm (iv)} \  For $p >p_0$, $F\in H_\lambda^{p}(\RR^2_+)$ if and only if
$F^*_{\nabla}\in L^{p}_{\lambda}(\RR)$ and
   $ \|F\|_{H^{p}_{\lambda}}\approx \|F^*_{\nabla}\|_{L^p_{\lambda}}$, where   $F_{\nabla}^*(x)=\sup_{|x-u|<y}|F(u, y)|$ is the non-tangential maximal functions of $F$.\\
{\rm (v)} \   If $p >p_0$ and $p_1\geq p_0$, $F(x,y)\in H^{p}_{\lambda}(\RR^2_+)$
and $F(x) \in L^{p_1}_{\lambda}(\RR)$, then $F\in
H^{p_1}_{\lambda}(\RR^2_+)$.
\end{theorem}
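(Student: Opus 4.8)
The plan is to reduce every assertion to the $\lambda$-harmonic majorization of Theorem~\ref{majorization-2} together with the Poisson-integral machinery of Proposition~\ref{Poisson-conjugate-CR}. Throughout I write $q=p/p_0\ge 1$ and fix the nonnegative $g\in L^{q}_\lambda(\RR)$ with $|F(x,y)|^{p_0}\le (Pg)(x,y)$ and $\|g\|_{L^{q}_\lambda}\approx\|F\|^{p_0}_{H^p_\lambda}$ supplied by that theorem. The point is that $F$ is pointwise controlled by the Poisson integral of an $L^q_\lambda$ function, so all of its boundary behaviour can be inherited from the already-established properties of $P$.

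For (i), the majorant $Pg$ has finite nontangential limits almost everywhere: when $q>1$ this follows from the $(q,q)$ bound for $P^*_{\nabla}$ in Proposition~\ref{Poisson-conjugate-CR}(iv) together with the dense-class convergence of Proposition~\ref{D-translation-a}(ii), and when $q=1$ from the weak-$(1,1)$ bound. Consequently $F$ is nontangentially bounded at almost every $x\in\RR$. I would then invoke a local Fatou (Privalov) principle for the $\lambda$-harmonic components $u$ and $v$: a $\lambda$-harmonic function bounded in a truncated cone over a set $E$ possesses nontangential limits a.e. on $E$. Granting this, $u$ and $v$, hence $F=u+iv$, converge nontangentially a.e. The main obstacle here is establishing the local Fatou principle in the Dunkl setting; I expect this to rest on comparing the cone with the $\lambda$-Poisson kernel estimate and on an area-integral or good-$\lambda$ type argument adapted to the measure $|x|^{2\lambda}dx$.

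Parts (iii), (iv), (v) are then essentially mechanical. For (iv), the easy inequality $\|F\|_{H^p_\lambda}\le\|F^*_{\nabla}\|_{L^p_\lambda}$ comes from $|F(x,y)|\le F^*_{\nabla}(x)$ for every $y>0$; for the reverse, taking the cone supremum in the majorization gives $(F^*_{\nabla})^{p_0}\le P^*_{\nabla}g$, and applying the $(q,q)$ boundedness of $P^*_{\nabla}$ (valid since $q>1$ when $p>p_0$) yields $\|F^*_{\nabla}\|^{p_0}_{L^p_\lambda}\lesssim\|g\|_{L^q_\lambda}\approx\|F\|^{p_0}_{H^p_\lambda}$. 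For (iii) I would combine the a.e.\ convergence from (i) with the domination $|F(\cdot,y)-F|^p\le 2^p(F^*_{\nabla})^p\in L^1_\lambda$ furnished by (iv) and conclude by dominated convergence; Fatou's lemma gives $\|F\|_{L^p_\lambda}\le\|F\|_{H^p_\lambda}$, the reverse follows from the $L^p$ convergence, and the stated norm equivalence results. The restriction $p>p_0$ (i.e.\ $q>1$) is exactly what makes $P^*_{\nabla}$ strongly bounded and the dominating function integrable. For (v), using (i) and (iii) I would identify the least $\lambda$-harmonic majorant of $|F|^{p_0}$ with the Poisson integral of its boundary values, so that $|F(x,y)|^{p_0}\le P(|F(\cdot)|^{p_0})(x,y)$; since $p_1\ge p_0$ means $p_1/p_0\ge 1$, the contraction property of $P$ on $L^{p_1/p_0}_\lambda$ from Proposition~\ref{Poisson-conjugate-CR}(v) gives $\||F(\cdot,y)|^{p_0}\|_{L^{p_1/p_0}_\lambda}\le\||F|^{p_0}\|_{L^{p_1/p_0}_\lambda}=\|F\|^{p_0}_{L^{p_1}_\lambda}$ uniformly in $y$, whence $F\in H^{p_1}_\lambda(\RR^2_+)$.

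Finally, for the uniqueness statement (ii) I would argue that a nontrivial $F\in H^p_\lambda$ cannot vanish on a symmetric set of positive $|\cdot|_\lambda$-measure. The symmetry hypothesis is natural because the Dunkl operator couples $x$ and $-x$ through its reflection term; I would exploit this by forming an ordinary analytic object from $F$ away from the axis and using the $\lambda$-Cauchy--Riemann system (\ref{a c r0}) to transfer the vanishing, then invoke the classical fact that the boundary values of a nonzero analytic Hardy function are nonzero almost everywhere, equivalently the integrability of $\log|F|$ against the harmonic measure. I expect part (ii), along with the local Fatou step in (i), to be the genuinely delicate points, whereas (iii)--(v) require only careful bookkeeping once the majorization and the Poisson $L^q$ estimates are in hand.
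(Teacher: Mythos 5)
This theorem is imported into the paper from \cite{ZhongKai Li 3} without proof, so there is no in-paper argument to compare against; I can only assess your outline on its own terms and against the strategy of the cited source. Your reductions of (iii), (iv) and (v) to the harmonic majorization of Theorem~\ref{majorization-2} and the maximal-function bounds of Proposition~\ref{Poisson-conjugate-CR} are essentially correct and follow the standard route. One small repair in (iii): the inequality $\|F\|_{H^p_\lambda}\lesssim\|F(\cdot)\|_{L^p_\lambda}$ does not follow from boundary $L^p_\lambda$-convergence alone, since the $H^p_\lambda$ norm is a supremum over all $y>0$; you also need that $y\mapsto\|F(\cdot,y)\|_{L^p_\lambda}$ is nonincreasing, which you can extract from the sub-mean-value inequality $|F(x,y+y_0)|^{p_0}\le P\bigl(|F(\cdot,y_0)|^{p_0}\bigr)(x,y)$ together with the $L^{p/p_0}_\lambda$-contractivity of the $\lambda$-Poisson integral.

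The genuine gaps are exactly where you suspect them, and they are not peripheral. First, part (i): you reduce it to a local Fatou (Privalov) theorem for $\lambda$-harmonic functions but do not prove that theorem, and it is not available off the shelf. The operator $\triangle_\lambda$ carries the nonlocal reflection term $\frac{\lambda}{x^2}\left(u(x,y)-u(-x,y)\right)$ and the degenerate weight $|x|^{2\lambda}$, so the classical sawtooth-domain, harmonic-measure, or area-integral proofs do not transfer verbatim; establishing this local Fatou principle is the substantive content of part (i) in \cite{ZhongKai Li 3}, not a citable black box. Second, part (ii) is a program rather than a proof: a $\lambda$-analytic function is not holomorphic, $\log|F|$ is not known to be $\lambda$-subharmonic (only $|F|^{p_0}$ is, which is precisely what powers Theorem~\ref{majorization-2}), and ``forming an ordinary analytic object away from the axis'' is itself the nontrivial step --- the reflection term couples $x$ with $-x$, which is why the symmetry of $E$ is hypothesized, and the known argument passes through the even/odd decomposition of $F$ and a transmutation to classical Hardy functions before log-integrability of boundary values can be invoked. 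As written, the two parts you yourself flag as delicate are asserted rather than proved, and they carry most of the weight of the theorem.
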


\begin{theorem}\cite{ZhongKai Li 3}\label{analytic-character-1}
{\rm (i)} \    If $1\le p<\infty$ and  $F=u+iv\in H_{\lambda}^p(\RR^2_+)$, then $F$
is the $\lambda$-Poisson integral of its boundary values $F(x)$, and $F(x)\in
L^p_{\lambda}(\RR)$.\\
{\rm (ii)} \  If $1\le p\le2$, then the $\lambda$-Poisson integral of a function in $L^{p}_{\lambda}(\RR)$ is in $H^p_{\lambda}(\RR^2_+)$ if
and  only if the Dunkl transform of the function vanishes on $(-\infty, 0)$.\\
{\rm (iii)} \  If $p_0\le p\le1$ and $F(x,y)\in H_{\lambda}^p(\RR^2_+)$,
then $F$ has the following representation
\begin{eqnarray}\label{D-Poisson-5}
F(x,y)=c_{\lambda}\int_0^{\infty}e^{-y\xi}\phi(\xi)E_{\lambda}(ix\xi)|\xi|^{2\lambda}d\xi,\ \ \ \ (x, y)\in\RR_+^2,
\end{eqnarray}
where $\phi$ is a continuous function on $\RR$ satisfying $\phi(\xi)=0$ for $\xi\in(-\infty, 0]$ and that for each $y>0$, the function
$\xi\rightarrow e^{-y|\xi|}\phi(\xi)$ is bounded on $\RR$ and in  $L^{1}_{\lambda}(\RR)$.\\
{\rm (iv)} \  For $F\in H^p_{\lambda}(\RR^2_+)$, $p\ge p_0$,
there is a constant $c>0$, such that  $|F(x,y)|\leq
cy^{-(2\lambda+1)/p}\|F\|_{H^p_\lambda}$ for  $y>0$.\\
{\rm (v)} \  If $p_0<p\le2$,  $F\in H^p_{\lambda}(\RR^2_+)$, then $\int_0^{\infty}|(\SF_{\lambda}F)(\xi)|^p|\xi|^{(2\lambda+1)(p-2)+2\lambda}d\xi\le
c\|F\|_{H_{\lambda}^p}^p,$  where $c$ is a constant independent on $F$.\\
{\rm (vi)} \ For $F\in H^p_{\lambda}(\RR^2_+)$, where $p_0<p<1$ and $k\ge p$,  we could have $$\int_0^{\infty}|(\SF_{\lambda}F)(\xi)|^k|\xi|^{(2\lambda+1)(k-1-k/p)+2\lambda}d\xi\le
c\|F\|_{H_{\lambda}^p}^p,$$  $|(\SF_{\lambda}F)(\xi)|\le
c\xi^{(1+2\lambda)(p^{-1}-1)}\|F\|_{H_{\lambda}^p}$, and $(\SF_{\lambda}F)(\xi)=o\left(\xi^{(1+2\lambda)(p^{-1}-1)}\right)$
as $\xi\rightarrow+\infty$.
\end{theorem}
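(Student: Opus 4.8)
The plan is to prove the six parts in a dependency‑respecting order, with the main engine being the $\lambda$‑harmonic majorization (Theorem~\ref{majorization-2}), the boundary theory of Theorem~\ref{ss}, the $\lambda$‑Poisson machinery of Proposition~\ref{Poisson-conjugate-CR}, and the observation after~(\ref{D-kernel-2}) that $T_{\bar z}\big(e^{-y\xi}E_\lambda(ix\xi)\big)=0$ for $\xi>0$. I would open with the growth estimate (iv): Theorem~\ref{majorization-2} gives $g\in L^{p/p_0}_\lambda(\RR)$ with $|F(x,y)|^{p_0}\le(Pg)(x,y)$ and $\|g\|_{L^{p/p_0}_\lambda}\approx\|F\|^{p_0}_{H^p_\lambda}$; applying H\"older in $(Pg)(x,y)=c_\lambda\int_\RR g(t)(\tau_xP_y)(-t)|t|^{2\lambda}dt$ with exponents $p/p_0$ and $q=(p/p_0)'$, together with the translation bound (Proposition~\ref{tau-convolution-a}(ii)) and the scaling $\|P_y\|_{L^q_\lambda}=Cy^{(2\lambda+1)(1/q-1)}$, yields $(Pg)(x,y)\lesssim\|g\|_{L^{p/p_0}_\lambda}y^{-(2\lambda+1)p_0/p}$, and a $p_0$‑th root gives (iv).

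For (i), with $1\le p<\infty$, the boundary values $F(x)=\lim_{(t,y)\to(x,0)}F(t,y)$ exist a.e.\ by Theorem~\ref{ss}(i); Fatou's lemma applied to the majorization shows $F\in L^p_\lambda$ with $\|F\|_{L^p_\lambda}\lesssim\|F\|_{H^p_\lambda}$; and the semigroup identity $F(\cdot,y_0+y)=P[F(\cdot,y_0)](\cdot,y)$ (Proposition~\ref{Poisson-conjugate-CR}(ii)), combined with the $L^p$‑convergence $F(\cdot,y_0)\to F$ (Theorem~\ref{ss}(iii)) and the contraction $\|(Pf)(\cdot,y)\|_{L^p_\lambda}\le\|f\|_{L^p_\lambda}$, identifies $F=P[F]$ on letting $y_0\to0^+$. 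For (ii), if $\SF_\lambda f$ is supported in $[0,\infty)$ then formula~(\ref{D-Poisson-2}) writes $(Pf)(x,y)=c_\lambda\int_0^\infty e^{-y\xi}(\SF_\lambda f)(\xi)E_\lambda(ix\xi)|\xi|^{2\lambda}d\xi$ as a superposition of the $\lambda$‑analytic kernels $e^{-y\xi}E_\lambda(ix\xi)$, hence is $\lambda$‑analytic and, via the contraction, lies in $H^p_\lambda$; conversely, applying $T_{\bar z}$ to the representation over all of $\RR$ isolates the $\xi<0$ part as the non‑vanishing anti‑analytic contribution, so the injectivity of $\SF_\lambda$ (Proposition~\ref{D-transform-a}(ii),(v)) forces $\SF_\lambda f=0$ on $(-\infty,0)$.

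To reach the $p\le1$ representation (iii) without circularity I would first establish the pointwise transform bound appearing in (vi) directly from (iv). For $y>0$ the slice $F(\cdot,y)$ is bounded, hence $F(\cdot,y)\in L^p_\lambda\cap L^\infty\subset L^1_\lambda$ with $\|F(\cdot,y)\|_{L^1_\lambda}\le\|F(\cdot,y)\|_{L^p_\lambda}^{p}\|F(\cdot,y)\|_\infty^{1-p}\lesssim\|F\|_{H^p_\lambda}y^{-(2\lambda+1)(1-p)/p}$ by (iv). Since $F(\cdot,y)\in L^2_\lambda$ is the boundary value of the $\lambda$‑analytic $F(\cdot,\cdot+y)$, part (ii) and the semigroup transform rule give $\SF_\lambda[F(\cdot,y)](\xi)=e^{-y\xi}\phi(\xi)$ for a single continuous $\phi$ supported in $[0,\infty)$; then $|\phi(\xi)|\le e^{y\xi}\|\SF_\lambda[F(\cdot,y)]\|_\infty\lesssim e^{y\xi}\|F\|_{H^p_\lambda}y^{-(2\lambda+1)(1-p)/p}$, and minimizing over $y$ at $y=(2\lambda+1)(1-p)/(p\xi)$ produces $|\phi(\xi)|\lesssim\xi^{(2\lambda+1)(1/p-1)}\|F\|_{H^p_\lambda}$. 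This bound makes $\xi\mapsto e^{-y|\xi|}\phi(\xi)|\xi|^{2\lambda}$ integrable on $(0,\infty)$, so letting $\epsilon\to0^+$ in $F(x,y+\epsilon)=c_\lambda\int_0^\infty e^{-(y+\epsilon)\xi}\phi(\xi)E_\lambda(ix\xi)|\xi|^{2\lambda}d\xi$ yields~(\ref{D-Poisson-5}) and completes (iii).

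The weighted Fourier‑decay bounds (v) and the first inequality of (vi) are where I expect the \textbf{main obstacle}. The endpoint $p=2$ of (v) is Plancherel (Proposition~\ref{D-transform-a}(v)) applied to $\SF_\lambda[F(\cdot,y)](\xi)=e^{-y\xi}\phi(\xi)$ as $y\to0^+$. The pointwise bound does \emph{not} suffice on its own: substituting it into $\int_0^\infty|\phi|^p|\xi|^{(2\lambda+1)(p-2)+2\lambda}d\xi$ produces an integrand of size $\xi^{-1}$ near infinity, which is only logarithmically divergent, so the $L^2$ structure must enter essentially. I would run a Hardy--Littlewood argument: integrate the slice identity $\int_0^\infty|\phi(\xi)|^2e^{-2y\xi}|\xi|^{2\lambda}d\xi=\|F(\cdot,y)\|_{L^2_\lambda}^2$ against a power weight $y^s\,dy$, evaluate the inner integral via $\int_0^\infty e^{-2y\xi}y^s\,dy\sim\xi^{-s-1}$, and bound the $y$‑integral by splitting its range and using on each piece either the slice estimate $\|F(\cdot,y)\|_{L^2_\lambda}^2\lesssim y^{-(2\lambda+1)(2-p)/p}\|F\|_{H^p_\lambda}^2$ (from (iv)) or the pointwise bound; choosing $s$ so that the emergent $\xi$‑weight equals $(2\lambda+1)(p-2)+2\lambda$ gives (v), and the identical scheme with $k$ replacing $p$ gives the first bound of (vi). The $o\big(\xi^{(1+2\lambda)(1/p-1)}\big)$ decay then follows from a density/Riemann--Lebesgue argument, approximating $F$ in $H^p_\lambda$ by functions with faster‑decaying spectral density and invoking the uniform pointwise bound. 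The genuinely delicate points are the bookkeeping in the $y$‑range splitting so the two bounds combine to the exact target exponent, and justifying every interchange of integration using only the a priori growth of $\phi$.
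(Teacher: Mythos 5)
First, a structural remark: the present paper does not prove Theorem~\ref{analytic-character-1} at all --- it is imported verbatim from \cite{ZhongKai Li 3} as background in Section~\ref{facts} --- so there is no in-paper argument to measure you against; I can only assess your reconstruction on its own terms. Your route through (iv) (majorization plus H\"older against $\|P_y\|_{L^q_\lambda}\sim y^{(2\lambda+1)(1/q-1)}$), (i), (ii), the representation (iii), and the pointwise bound and $o(\cdot)$ statement in (vi) is the standard Stein--Weiss-type argument and is sound in outline, modulo routine justifications (interchanging $T_{\bar z}$ with the integral in (ii); using Theorem~\ref{ss}(v) to place the shifted function $F(\cdot,\cdot+y)$ in $H^2_\lambda$ so that (i)--(ii) apply to the slices; consistency of $\phi$ across slices).

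The genuine gap is in part (v) and the first display of (vi), and it is exactly where you flag your ``main obstacle'' --- but the scheme you sketch cannot be repaired by better bookkeeping. After integrating the Plancherel identity against $y^s\,dy$ and matching exponents, your argument reduces to needing
\begin{equation*}
\int_0^\infty y^{(2\lambda+1)(2-p)/p-1}\,\|F(\cdot,y)\|_{L^2_\lambda}^2\,dy\;\lesssim\;\|F\|_{H^p_\lambda}^2 ,
\end{equation*}
and the only two estimates you have in hand --- the slice bound $\|F(\cdot,y)\|_{L^2_\lambda}^2\lesssim y^{-(2\lambda+1)(2-p)/p}\|F\|_{H^p_\lambda}^2$ from (iv), and the pointwise bound on $\phi$ substituted back into the Plancherel identity --- produce \emph{identical} powers of $y$, each turning the integrand into $y^{-1}\|F\|_{H^p_\lambda}^2$. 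No splitting of the $y$-range between two bounds with the same scaling can convert a doubly (logarithmically) divergent integral into a convergent one; the same failure occurs if one instead applies H\"older on dyadic blocks in $\xi$, where every block contributes a uniform constant and the sum over blocks diverges. What is missing is a genuinely \emph{integrated-in-$y$} estimate of Hardy--Littlewood type (e.g.\ a Dunkl analogue of $\int_0^\infty y^{(2\lambda+1)(1/p-1)-1}\|F(\cdot,y)\|_{L^1_\lambda}\,dy\lesssim\|F\|_{H^p_\lambda}$, or the weighted $L^2$ version displayed above), which is strictly stronger than the pointwise-in-$y$ slice bounds and requires its own proof from the harmonic majorization. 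Until you supply such a lemma, (v) and the integral inequality of (vi) remain unproved; note also that you cannot fall back on an atomic decomposition here without circularity, since in the logical order of these papers the atomic theory comes later.
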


\subsection{The Real Hardy spaces $H_{\lambda}^{p}(\RR)$ for $1 \geq p>\frac{1}{1+\gamma_\lambda}$ with $\displaystyle{\gamma_\lambda=1/(4\lambda+2)}$ }
 The following  Definition\,\ref{o1}, Proposition\,\ref{s5}, Proposition\,\ref{s4}, Proposition\,\ref{s3} and Theorem\,\ref{u2} in this section can be found in \,\cite{hu} and\,\cite{MS2}.

A Lebesgue measure  function $a(x)$ is  a $p_{\lambda}$-atom, if it satisfies
 $\|a(x)\|_{L_{\lambda}^{\infty}}\lesssim \frac{1}{|I|_{\lambda}^{1/p}}$,
 ${\rm supp}\,a(x)\subseteq  I$, and
 $\int_{\RR} a(t)|t|^{2\lambda}dt=0 $, where $I$ is denoted as an interval on the real line $\RR$ and $|I|_{\lambda}=\int_I |t|^{2\lambda}dt$.

\begin{definition}\cite{hu}\label{o1}
For $\frac{2\lambda}{2\lambda+1}<p<\infty$, $\widetilde{H}_{\lambda}^p(\RR)$ is denote as
\begin{eqnarray*}
\widetilde{H}_{\lambda}^p(\RR)&\triangleq&\left\{g(x)\in L_{\lambda}^1(\RR)\bigcap L_{\lambda}^2(\RR): \|P_{\nabla}^*g\|^p_{L_{\lambda}^p(\RR)}<\infty\right\}.
\end{eqnarray*}
with:
$$\|g\|^p_{H_{\lambda}^p(\RR)}=\|P_{\nabla}^*g\|^p_{L_{\lambda}^p(\RR)}.$$
Thus for $p\geq1$, $\|\cdot\|_{H^p_{\lambda}(\RR)}$ is a norm and for $1>p>0$, $\|\cdot\|^p_{H_{\lambda}^p(\RR)}$ is a norm.

Thus  $\widetilde{H}_{\lambda}^p(\RR)$ is a linear space equipped with: $\|\cdot\|^p_{H_{\lambda}^p(\RR)}$ when $\frac{2\lambda}{2\lambda+1}<p<\infty$, and $\widetilde{H}_{\lambda}^p(\RR)$ is not a complete space. The completion of $\widetilde{H}_{\lambda}^p(\RR)$ with  $\|\cdot\|^p_{H_{\lambda}^p(\RR)}$ is denoted as  $H_{\lambda}^p(\RR)$.
\end{definition}

\begin{proposition}\cite{hu}\label{s5}
 $H_{\lambda}^p(\RR)\bigcap H_{\lambda}^2(\RR)\bigcap H_{\lambda}^1(\RR)$ is dense in $H_{\lambda}^p(\RR)$ for $\frac{2\lambda}{2\lambda+1}<p<\infty$.   $H_{\lambda}^p(\RR)=L^{p}_{\lambda}(\RR)$, for $1<p<\infty$.    $H_{\lambda}^1(\RR)\subset L^{1}_{\lambda}(\RR).$
\end{proposition}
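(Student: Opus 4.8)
Write $Mg:=P_{\nabla}^{*}g$ for the non-tangential Poisson maximal function. Everything rests on two inputs from Section~\ref{facts}: the fact (Proposition~\ref{Poisson-conjugate-CR}(iv)) that $M$ is of type $(q,q)$ for $1<q\le\infty$ and of weak type $(1,1)$, and the pointwise domination $|g(x)|\le (P^{*}g)(x)\le (Mg)(x)$ a.e. This last bound holds because $(P^{*}g)(x)=\sup_{y>0}|(Pg)(x,y)|$ is the restriction of the cone supremum to $s=x$, while $(Pg)(x,y)\to g(x)$ a.e. along a suitable $y\to0$ (take an a.e.-convergent subsequence from the $L_{\lambda}^{p}$/$C_0$ convergence of Proposition~\ref{Poisson-conjugate-CR}(v)). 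In particular $\|g\|_{L_{\lambda}^{q}}\le\|Mg\|_{L_{\lambda}^{q}}$ whenever the right side is finite. For $H_{\lambda}^{p}(\RR)=L_{\lambda}^{p}(\RR)$, $1<p<\infty$, I would first identify the set $\widetilde{H}_{\lambda}^{p}(\RR)$: for $g\in L_{\lambda}^{1}\cap L_{\lambda}^{2}$ the $(p,p)$ bound gives $\|Mg\|_{L_{\lambda}^{p}}\lesssim\|g\|_{L_{\lambda}^{p}}$ and the domination gives the reverse, so $\widetilde{H}_{\lambda}^{p}=L_{\lambda}^{1}\cap L_{\lambda}^{2}\cap L_{\lambda}^{p}$ with $\|\cdot\|_{H_{\lambda}^{p}}\approx\|\cdot\|_{L_{\lambda}^{p}}$ on it. Since $\SD(\RR)\subset L_{\lambda}^{1}\cap L_{\lambda}^{2}\cap L_{\lambda}^{p}$ is dense in $L_{\lambda}^{p}$ and $L_{\lambda}^{p}$ is complete, the completion of $\widetilde{H}_{\lambda}^{p}$ for this equivalent norm is exactly $L_{\lambda}^{p}$.

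\textbf{The inclusion $H_{\lambda}^{1}\subset L_{\lambda}^{1}$.} The domination yields $\|g\|_{L_{\lambda}^{1}}\le\|g\|_{H_{\lambda}^{1}}$ on $\widetilde{H}_{\lambda}^{1}$, so $\widetilde{H}_{\lambda}^{1}$ embeds isometrically (the norm being $\|M\cdot\|_{L_{\lambda}^{1}}$ in both) into the concrete space $\mathbf{H}:=\{g\in L_{\lambda}^{1}(\RR):Mg\in L_{\lambda}^{1}(\RR)\}$. Because the completion of a normed space is its closure inside any complete space into which it embeds isometrically, it suffices to show $\mathbf{H}$ is complete; then $H_{\lambda}^{1}\cong\overline{\widetilde{H}_{\lambda}^{1}}^{\,\mathbf{H}}\subset\mathbf{H}\subset L_{\lambda}^{1}$, with $\|\cdot\|_{L_{\lambda}^{1}}\le\|\cdot\|_{H_{\lambda}^{1}}$ preserved by continuity. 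Completeness I would get from lower semicontinuity of $M$: if $g_n\to g$ in $L_{\lambda}^{1}$ then for each fixed $(s,y)$, $(Pg_n)(s,y)=\langle g_n,(\tau_{s}P_{y})(-\cdot)\rangle_{\lambda}\to(Pg)(s,y)$, since the kernel is bounded for $y>0$ (the denominator in Proposition~\ref{Poisson-a}(i) is $\gtrsim y^{2\lambda+2}$, so $\|\tau_{s}P_{y}\|_{\infty}\lesssim y^{-2\lambda-1}$). Taking the cone supremum and using $\sup_{\alpha}\liminf_{n}a_{n,\alpha}\le\liminf_{n}\sup_{\alpha}a_{n,\alpha}$ gives $Mg(x)\le\liminf_{n}Mg_n(x)$; applying this to the differences $g_n-g_m$ ($m$ fixed, $n\to\infty$) and invoking Fatou yields $\|g-g_m\|_{\mathbf{H}}\le\liminf_{n}\|g_n-g_m\|_{\mathbf{H}}$, so every $\mathbf{H}$-Cauchy sequence converges in $\mathbf{H}$.

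\textbf{Density of $H_{\lambda}^{p}\cap H_{\lambda}^{2}\cap H_{\lambda}^{1}$.} Since $\widetilde{H}_{\lambda}^{p}$ is dense in $H_{\lambda}^{p}$ by construction, for $p_0<p\le1$ it is enough to observe $\widetilde{H}_{\lambda}^{p}\subset\widetilde{H}_{\lambda}^{1}\cap\widetilde{H}_{\lambda}^{2}$. Indeed $g\in\widetilde{H}_{\lambda}^{p}$ lies in $L_{\lambda}^{1}\cap L_{\lambda}^{2}$, so $Mg\in L_{\lambda}^{2}$ by the $(2,2)$ bound (hence $g\in\widetilde{H}_{\lambda}^{2}$), while for $p<1$ the elementary splitting $\int|Mg|\,|x|^{2\lambda}dx\le\int(Mg)^{p}|x|^{2\lambda}dx+\int(Mg)^{2}|x|^{2\lambda}dx<\infty$ shows $Mg\in L_{\lambda}^{p}\cap L_{\lambda}^{2}\subset L_{\lambda}^{1}$ (hence $g\in\widetilde{H}_{\lambda}^{1}$). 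For $1<p<\infty$ I would instead use $H_{\lambda}^{p}=L_{\lambda}^{p}$ and $H_{\lambda}^{2}=L_{\lambda}^{2}$ and approximate in $L_{\lambda}^{p}$ by mean-zero $\phi\in\SD(\RR)$: such $\phi$ lie in $L_{\lambda}^{1}\cap L_{\lambda}^{2}\cap L_{\lambda}^{p}$, and by the cancellation $\int\phi\,|t|^{2\lambda}dt=0$ together with the kernel estimates of Propositions~\ref{Poisson-a} and~\ref{D-translation-a} one gets $M\phi\lesssim(1+|x|)^{-2\lambda-2}\in L_{\lambda}^{1}$, so $\phi\in H_{\lambda}^{1}$. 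Density of these $\phi$ in $L_{\lambda}^{p}$ follows by first approximating by a Schwartz function and then subtracting a smooth mean-one bump supported on an annulus $\{R\le|x|\le2R\}$ whose $L_{\lambda}^{p}$-norm tends to $0$ as $R\to\infty$ (here $p>1$ is used), which removes the mean while perturbing the $L_{\lambda}^{p}$-norm arbitrarily little.

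\textbf{Main obstacle.} The functional-analytic skeleton—identifying completions with concrete spaces via ``completion $=$ closure in a complete superspace,'' and the soft equivalences of norms—is routine once the maximal bounds of Section~\ref{facts} are in hand. The two points that require genuine work are the lower-semicontinuity step underlying the completeness of $\mathbf{H}$ (which needs the explicit boundedness of $(\tau_{s}P_{y})(-\cdot)$ for $y>0$) and, above all, the concrete estimate $M\phi\in L_{\lambda}^{1}$ for mean-zero $\phi\in\SD(\RR)$ used in the $p>1$ density. This last is the real-variable heart of the argument: it is where the cancellation of $\phi$ must be combined with the precise decay of the $\lambda$-Poisson kernel to beat the weight $|x|^{2\lambda}$ at infinity, and I expect it, rather than any of the abstract steps, to be the main technical hurdle.
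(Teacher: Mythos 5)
The paper does not prove Proposition~\ref{s5}: it is imported verbatim from \cite{hu}, so there is no in-text argument to compare against. Judged on its own, your proposal is correct and essentially self-contained. The three pillars all hold up: (a) the norm equivalence $\|Mg\|_{L_{\lambda}^{p}}\approx\|g\|_{L_{\lambda}^{p}}$ on $\widetilde{H}_{\lambda}^{p}=L_{\lambda}^{1}\cap L_{\lambda}^{2}\cap L_{\lambda}^{p}$ for $1<p<\infty$ (the $(p,p)$ bound one way, the pointwise domination $|g|\le Mg$ the other), which identifies the completion with $L_{\lambda}^{p}$; (b) the lower semicontinuity of $M$ under $L_{\lambda}^{1}$-convergence, which makes $\mathbf{H}=\{g\in L_{\lambda}^{1}:Mg\in L_{\lambda}^{1}\}$ complete and realizes $H_{\lambda}^{1}$ concretely inside $L_{\lambda}^{1}$ — this step is needed because the abstract completion in Definition~\ref{o1} does not by itself consist of functions; and (c) the inclusion $\widetilde{H}_{\lambda}^{p}\subset\widetilde{H}_{\lambda}^{1}\cap\widetilde{H}_{\lambda}^{2}$ via $t\le t^{p}+t^{2}$ for $p\le1$, together with the mean-zero $\SD$ approximation for $p>1$. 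The one piece of genuine analysis you defer — $M\phi\lesssim(1+|x|)^{-2\lambda-2}$ for a cancellative bump, hence $M\phi\in L_{\lambda}^{1}$ — is exactly the computation this paper carries out for $p_{\lambda}$-atoms in Proposition~\ref{us8} (Formulas~(\ref{us1})--(\ref{106})), so you could cite that estimate rather than redo it; note that the cancellation must be taken against the weighted measure, $\int\phi(t)|t|^{2\lambda}dt=0$, as you wrote. The only point worth tightening is the meaning of the intersection $H_{\lambda}^{p}\cap H_{\lambda}^{2}\cap H_{\lambda}^{1}$ of three distinct abstract completions; your embedding of each into a common concrete space ($L_{\lambda}^{1}+L_{\lambda}^{2}$, via part (b)) is precisely what legitimizes it and should be stated explicitly.
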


\begin{proposition}\cite{hu}\label{s4}
For $1 \geq p>\frac{1}{1+\gamma_\lambda}$, $\displaystyle{\gamma_\lambda=\frac{1}{2(2\lambda+1)}}$, $H_{\lambda}^p(\RR)$ are  Homogeneous Hardy Spaces $H^p$. Thus by
\cite{MS2}, for any $f\in H_{\lambda}^p(\RR)$ there exists a sequence of $p_\lambda$-atoms $\{a_n(x)\}$ and a numerical sequence $\{\lambda_n\}$ such that
$$f(x)=\sum_n\lambda_n a_n(x)$$
in $H_{\lambda}^p(\RR)$ spaces, moreover, there exists two positive and finite constants $c_p$ and $C_p$ independent of $f$ such that
$$c_p\|f\|_{H_{\lambda}^p(\RR)}^p\leq \sum_n|\lambda_n|^p\leq C_p\|f\|_{H_{\lambda}^p(\RR)}^p.$$
Also, we could obtain the following Formula:
$$\|f\|_{H_{\lambda}^p(\RR)}^p\sim_{p} \sum_n|\lambda_n|^p\sim_{\lambda,p}\|P^*f\|^p_{L_{\lambda}^p(\RR)}.$$

\end{proposition}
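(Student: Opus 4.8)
The plan is to recognize $(\RR,\,|x|^{2\lambda}dx,\,|\cdot|)$ as a normal space of homogeneous type in the sense of Coifman--Weiss and Mac\'ias--Segovia, to identify $H_\lambda^p(\RR)$ --- defined in Definition~\ref{o1} through the nontangential $\lambda$-Poisson maximal function $P^*_{\nabla}$ --- with the abstract maximal Hardy space over that space, and then to read off the atomic decomposition and the norm equivalences directly from \cite{MS2}. The first step is to verify that $d\mu=c_\lambda|x|^{2\lambda}dx$ is doubling with respect to Euclidean intervals $I=(x_0-r,x_0+r)$. Splitting into the regime $|x_0|\ge 2r$, where $|x|\sim|x_0|$ on $I$ and hence $|I|_\lambda\sim c_\lambda|x_0|^{2\lambda}r$, and the regime $|x_0|<2r$, where the origin dominates and $|I|_\lambda\sim r^{2\lambda+1}$, one checks in both cases that $|2I|_\lambda\lesssim 2^{2\lambda+1}|I|_\lambda$ uniformly in $x_0,r$. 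Thus $(\RR,d\mu)$ is of homogeneous type with upper dimension $2\lambda+1$.

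Next I would make the normal structure explicit. The increasing homeomorphism $\Phi(x)=\tfrac{c_\lambda}{2\lambda+1}(\operatorname{sgn}x)|x|^{2\lambda+1}$ satisfies $\Phi'(x)=c_\lambda|x|^{2\lambda}$, so it pushes $d\mu$ forward to Lebesgue measure; consequently $\delta(x,y)=|\Phi(x)-\Phi(y)|=|I_{x,y}|_\lambda$ (with $I_{x,y}$ the smallest interval containing $x,y$) is a genuine metric that is \emph{normal}, $\mu\{z:\delta(z,x)<s\}\sim s$. In these normalized coordinates a function that is Lipschitz in the Euclidean variable becomes H\"older continuous, the worst case occurring at the origin where $\Phi^{-1}(s)\sim s^{1/(2\lambda+1)}$. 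Tracking this degeneracy through the specific form of the $\lambda$-Poisson kernel produces the usable regularity order $\gamma_\lambda=\tfrac{1}{2(2\lambda+1)}$, which is exactly the exponent that selects the admissible range $\tfrac{1}{1+\gamma_\lambda}<p\le1$.

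With the geometry fixed, I would control $P^*_{\nabla}f$ by the grand maximal function of \cite{MS2}. Using the closed form \eqref{D-Poisson-ker-11} of the $\lambda$-Poisson kernel together with the pointwise decay \eqref{D-translation-2}, one shows that $(\tau_xP_y)(-\cdot)$, normalized in $d\mu$, is an admissible approximate identity: it has size $\lesssim \mu(I(x,y))^{-1}$ concentrated where $\delta(x,t)\lesssim\mu(I(x,y))$, sufficiently fast decay beyond that scale, and a matching H\"older estimate of order $\gamma_\lambda$ in the $x$-variable. Hence $P^*_{\nabla}f$ is pointwise dominated by a constant multiple of the grand maximal function, while conversely the general theory bounds the grand maximal function in $L^p_\lambda$-norm by the $\lambda$-Poisson maximal function; the same circle of ideas gives the comparability of $\|P^*_{\nabla}f\|_{L^p_\lambda}$ with $\|P^*f\|_{L^p_\lambda}$, the radial and nontangential versions both being equivalent to the grand maximal function (compare Proposition~\ref{Poisson-conjugate-CR}(iv) for $p>1$). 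Once $H^p_\lambda(\RR)$ is identified with the Mac\'ias--Segovia maximal Hardy space of the normal homogeneous-type space $(\RR,\delta,d\mu)$, the decomposition $f=\sum_n\lambda_n a_n$ into $p_\lambda$-atoms and the two-sided bound $c_p\|f\|^p_{H^p_\lambda}\le\sum_n|\lambda_n|^p\le C_p\|f\|^p_{H^p_\lambda}$ are exactly the content of \cite{MS2}, and the final chain of equivalences follows.

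The main obstacle I anticipate is not the invocation of \cite{MS2} but the kernel estimates that feed it: establishing the size-and-smoothness bounds for $(\tau_xP_y)(-t)$ uniformly across the degeneracy of $|x|^{2\lambda}$ at the origin and in the presence of the reflection term built into $\tau_x$. One must control how the factor $1+\operatorname{sgn}(xt)\cos\theta$ in \eqref{D-Poisson-ker-11} and the failure of $\tau_x$ to be a positive operator interact with the normalized distance $\delta$, and it is precisely this interaction that forces the H\"older order down to $\gamma_\lambda=\tfrac{1}{2(2\lambda+1)}$ rather than the naive Euclidean-Lipschitz exponent $\tfrac{1}{2\lambda+1}$; carrying these estimates out carefully, in a form compatible with the Mac\'ias--Segovia regularity hypotheses, is where the real work lies.
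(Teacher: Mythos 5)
The paper itself contains no proof of Proposition~\ref{s4}: the statement is imported verbatim from \cite{hu} and \cite{MS2} (see the sentence opening that subsection), so there is no in-text argument to measure your proposal against line by line. That said, your outline is the standard and almost certainly the intended route: verify that $c_\lambda|x|^{2\lambda}dx$ is doubling, pass to the normalized distance $\delta(x,y)=|\Phi(x)-\Phi(y)|$ with $\Phi'(x)=c_\lambda|x|^{2\lambda}$, show that the $\lambda$-Poisson maximal function is equivalent in $L^p_\lambda$ to the Mac\'ias--Segovia grand maximal function, and then quote the atomic decomposition of \cite{MS2}. Your doubling computation and the normalization step are correct, and your identification of where the difficulty sits is accurate.

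The genuine gap --- which, to your credit, you flag yourself --- is that the kernel estimates making the whole scheme run are only gestured at: the size, decay, and H\"older bounds of order exactly $\gamma_\lambda=\frac{1}{2(2\lambda+1)}$ for $(\tau_xP_y)(-t)$ with respect to $\delta$. This exponent is not produced by the normalization alone (which would give $\frac{1}{2\lambda+1}$); the extra factor $\frac12$ has a concrete source visible elsewhere in the paper, namely the computation in the proof of Proposition~\ref{us8}, where the mean value theorem applied to $\langle x,t\rangle_{y,s}^{\lambda+1}$ gains only $|t-t_0|\,\langle x,t_0\rangle_{y,s}^{\lambda+1/2}$, i.e.\ half a power of the quadratic form, because $|\partial_t\langle x,t\rangle_{y,s}|\lesssim\langle x,t\rangle_{y,s}^{1/2}$. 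Until that square-root loss is propagated through the Mac\'ias--Segovia regularity hypotheses --- together with the companion facts that the radial and nontangential Poisson maximal functions are $L^p_\lambda$-equivalent to the grand maximal function for $p\le1$ (Proposition~\ref{Poisson-conjugate-CR}(iv) only covers $p>1$) and that the completion in Definition~\ref{o1} coincides with the distributional space of Proposition~\ref{s6} --- what you have is a correct and well-aimed plan rather than a proof. The plan is sound and matches what \cite{hu} is reported to establish.
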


\begin{proposition}\cite{MS2}\label{s6}
For $1 \geq p>0$,  Homogeneous Hardy spaces $H^p$ is the  linear space of all bounded linear functions $f$ on $Lip(1/p-1)$, which can be represented as
$$f(x)=\sum_n\lambda_n a_n(x)$$
in the sense of distributions on $Lip(1/p-1)$, where $\{a_n(x)\}$ is a sequence of $p_\lambda$-atoms and $\{\lambda_n\}$ is a numerical sequence with
$\sum_n|\lambda_n|^p<\infty$, and the norm of $H^p$ is given by
\begin{eqnarray}\label{usus1}
\|f\|_{H^p}= \inf \bigg\{\bigg(\sum_n|\lambda_n|^p\bigg)^{1/p}\bigg\}.
\end{eqnarray}

\end{proposition}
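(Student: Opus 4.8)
The plan is to read this statement as the realization of the atomic Hardy space inside the dual of $Lip(1/p-1)$, so that two things must be verified: first, that every atomic series $f=\sum_n\lambda_n a_n$ with $\sum_n|\lambda_n|^p<\infty$ converges to a well-defined bounded linear functional on $Lip(1/p-1)$; and second, that the right-hand side of \eqref{usus1} is a genuine complete quasi-norm on the resulting space. Write $\beta=1/p-1\ge 0$. The crucial ingredient is a uniform estimate showing that a single $p_\lambda$-atom acts boundedly on $Lip(\beta)$, with a bound independent of the atom.

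To prove this estimate, fix a $p_\lambda$-atom $a$ supported on an interval $I$ with center $x_I$, and let $\phi\in Lip(\beta)$. The cancellation property $\int_\RR a(t)|t|^{2\lambda}\,dt=0$ lets me subtract the constant $\phi(x_I)$, giving
$$
\langle a,\phi\rangle_\lambda
= c_\lambda\int_I a(x)\big(\phi(x)-\phi(x_I)\big)|x|^{2\lambda}\,dx .
$$
Using $\|a\|_{L_\lambda^\infty}\lesssim|I|_\lambda^{-1/p}$ together with the Lipschitz bound $|\phi(x)-\phi(x_I)|\lesssim\|\phi\|_{Lip(\beta)}\,|I|_\lambda^{\beta}$ for $x\in I$ (valid in the normalization of \cite{MS2}, where the quasi-distance is comparable to the measure of the enclosing interval), the remaining integral is $c_\lambda\int_I|x|^{2\lambda}\,dx=|I|_\lambda$, so that
$$
|\langle a,\phi\rangle_\lambda|
\lesssim |I|_\lambda^{-1/p}\,\|\phi\|_{Lip(\beta)}\,|I|_\lambda^{\beta}\,|I|_\lambda
= \|\phi\|_{Lip(\beta)},
$$
since $\beta+1=1/p$. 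This is precisely the balance between the atom's normalization and the homogeneous measure, and it is the step I expect to require the most care.

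With the atom estimate in hand, the remaining steps are standard. For $0<p\le1$ the elementary inequality $\sum_n|\lambda_n|\le\big(\sum_n|\lambda_n|^p\big)^{1/p}$ shows that $\sum_n\lambda_n\langle a_n,\phi\rangle_\lambda$ converges absolutely for each fixed $\phi\in Lip(\beta)$, so the partial sums converge in the weak-$*$ (distributional) sense to a functional $f$ with $|\langle f,\phi\rangle_\lambda|\lesssim\big(\sum_n|\lambda_n|^p\big)^{1/p}\|\phi\|_{Lip(\beta)}$; taking the infimum over all decompositions yields $\|f\|_{(Lip(\beta))^*}\lesssim\|f\|_{H^p}$. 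In particular $\|f\|_{H^p}=0$ forces $f=0$, so \eqref{usus1} is positive-definite; homogeneity is immediate, and the $p$-triangle inequality $\|f+g\|_{H^p}^p\le\|f\|_{H^p}^p+\|g\|_{H^p}^p$ follows by concatenating atomic decompositions of $f$ and $g$. Completeness is then obtained by the usual telescoping argument: given a Cauchy sequence, pass to a subsequence whose consecutive differences have $H^p$-quasi-norm summable in $p$-th power, decompose each difference atomically, and merge all the atoms into a single decomposition of the limit. The genuinely deep part of \cite{MS2} — not needed for the literal statement here, but underlying Proposition \ref{s4} — is the converse matching of this atomic quasi-norm with a maximal-function characterization, which requires a Calder\'on--Zygmund decomposition on the space of homogeneous type $(\RR,|x|^{2\lambda}dx)$.
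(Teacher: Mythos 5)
The paper does not actually prove this statement: Proposition~\ref{s6} is quoted from Macias--Segovia \cite{MS2} and used as a black box, so there is no internal argument to compare yours against. What you have written is the standard verification that the atomic expression defines a complete quasi-normed space of functionals on $Lip(1/p-1)$, and it is correct: the pairing estimate $|\langle a,\phi\rangle_\lambda|\lesssim\|\phi\|_{Lip(\beta)}$ rests on exactly the right exponent bookkeeping ($-1/p+\beta+1=0$), the embedding $\ell^p\hookrightarrow\ell^1$ for $p\le1$ gives absolute convergence of $\sum_n\lambda_n\langle a_n,\phi\rangle_\lambda$, and the telescoping argument gives completeness; the concluding remark that the hard content of \cite{MS2} is the maximal-function characterization (which is what Proposition~\ref{s4} actually relies on) is also accurate. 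Two small caveats, neither of which is a gap. First, at the endpoint $p=1$ the space $Lip(0)$ degenerates and the cancellation of the atom is not what makes the pairing finite there; the crude bound $|\langle a,\phi\rangle_\lambda|\le\|a\|_{L_{\lambda}^1}\|\phi\|_{\infty}\lesssim\|\phi\|_{\infty}$ already suffices, and the duality for $p=1$ is properly read against $L^\infty$ (or BMO) rather than a Lipschitz class. Second, your bound $|\phi(x)-\phi(x_I)|\lesssim\|\phi\|_{Lip(\beta)}|I|_{\lambda}^{\beta}$ silently uses the Macias--Segovia normalized quasi-distance on the homogeneous space $(\RR,|x|^{2\lambda}dx)$ rather than the Euclidean distance; you flag this, and it is indeed the convention under which the paper invokes \cite{MS2}, but it deserves the explicit sentence you gave it, since with the Euclidean normalization the exponents would not cancel.
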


\begin{proposition}\cite{hu}\label{s3}
$u(x, y)$ is a $\lambda$-harmonic function, for $1 \geq p>\frac{1}{1+\gamma_\lambda}$\\
case1, $u_{\nabla}^\ast(x) \in L_{\lambda}^p(\RR)\bigcap L_{\lambda}^2(\RR)\bigcap L_{\lambda}^1(\RR)$, then there exists $f\in\widetilde{H}_{\lambda}^p(\RR)$, such that
\begin{eqnarray}\label{tan13}
u(x, y)= f*_\lambda P_y(x).
\end{eqnarray}
case2, $u_{\nabla}^\ast(x) \in L_{\lambda}^p(\RR)$, then there exists $f\in H_{\lambda}^p(\RR)$, such that
\begin{eqnarray}\label{tan14}
\displaystyle{\int \sup_{|x-s|<y}\bigg|u(s,y)-f*_\lambda P_y(s)\bigg|^p|x|^{2\lambda}dx=0,}
\end{eqnarray}
moreover, $$\|u_{\nabla}^\ast\|_{L_{\lambda}^p(\RR)}\sim \|f\|_{H_{\lambda}^p(\RR)}.$$
\end{proposition}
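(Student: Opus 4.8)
The plan is to reconstruct the boundary datum $f$ from the $\lambda$-harmonic function $u$ by controlling $u$ pointwise through $u^*_\nabla$ and then passing to the limit of the slices $u(\cdot,y)$ as $y\to0+$. Before separating the two cases I would record two elementary facts. Taking the vertex $x=s$ in the definition of the non-tangential maximal function gives $|u(s,y)|\le u^*_\nabla(s)$ for every $y>0$, whence $\|u(\cdot,y)\|_{L^q_\lambda}\le\|u^*_\nabla\|_{L^q_\lambda}$ uniformly in $y$, for each $q$. A H\"older/covering argument over the interval $(x-y,x+y)$ then upgrades this to the decay estimate $|u(x,y)|\lesssim\|u^*_\nabla\|_{L^p_\lambda}\,|(x-y,x+y)|_\lambda^{-1/p}$, so that each fixed slice $u(\cdot,\epsilon)$ is bounded and decaying and hence lies in $L^1_\lambda(\RR)\cap L^2_\lambda(\RR)$ as soon as $u^*_\nabla\in L^p_\lambda(\RR)$.

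For Case 1, the uniform $L^2_\lambda$ bound lets me take the Dunkl transform of $u(\cdot,y)$ in $x$. Using $\triangle_\lambda u=0$ together with Proposition \ref{D-transform-a}(iii), this reduces to the ordinary differential equation $\partial_y^2\widehat u=|\xi|^2\widehat u$ in $y$, whose solutions are $A(\xi)e^{-y|\xi|}+B(\xi)e^{y|\xi|}$; the uniform $L^2_\lambda$ boundedness for all $y>0$ forces $B\equiv0$, so $\widehat u(\xi,y)=A(\xi)e^{-y|\xi|}$ with $\|A\|_{L^2_\lambda}\le\|u^*_\nabla\|_{L^2_\lambda}$. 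Setting $f=\SF_\lambda^{-1}A\in L^2_\lambda(\RR)$ and comparing with the Poisson representation in Proposition \ref{Poisson-conjugate-CR}(vi) yields $u=Pf=f\ast_\lambda P_y$. Membership $f\in L^1_\lambda(\RR)$ then follows from Fatou's lemma applied to $u(\cdot,y)\to f$ (Proposition \ref{Poisson-conjugate-CR}(v)) together with the uniform $L^1_\lambda$ bound, and since $u=Pf$ gives $P^*_\nabla f=u^*_\nabla\in L^p_\lambda(\RR)$, Definition \ref{o1} shows $f\in\widetilde H^p_\lambda(\RR)$.

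For Case 2 only $u^*_\nabla\in L^p_\lambda(\RR)$ is assumed, and I would reduce to Case 1 by the vertical shift $u_\epsilon(x,y)=u(x,y+\epsilon)$. The cone inclusion $|x-s|<y\Rightarrow|x-s|<y+\epsilon$ gives $(u_\epsilon)^*_\nabla\le u^*_\nabla\in L^p_\lambda$, while the decay estimate above shows $(u_\epsilon)^*_\nabla$ is bounded, hence in $L^1_\lambda\cap L^2_\lambda$. Case 1 therefore applies to $u_\epsilon$ and produces $f_\epsilon=u(\cdot,\epsilon)\in\widetilde H^p_\lambda(\RR)\subset H^p_\lambda(\RR)$ with $u(x,y+\epsilon)=(f_\epsilon\ast_\lambda P_y)(x)$ and $\|f_\epsilon\|_{H^p_\lambda}\lesssim\|u^*_\nabla\|_{L^p_\lambda}$; the semigroup property of Proposition \ref{Poisson-conjugate-CR}(ii) gives the consistency $f_{\epsilon_1}=P_{\epsilon_1-\epsilon_2}\ast_\lambda f_{\epsilon_2}$ for $\epsilon_1>\epsilon_2$. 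Since $\{f_\epsilon\}$ is bounded in $H^p_\lambda(\RR)$, I would invoke the homogeneous Hardy-space structure (Propositions \ref{s4} and \ref{s6}), which realizes $H^p_\lambda(\RR)$ as bounded linear functionals on $Lip(1/p-1)$, and extract by Banach--Alaoglu a sequence $\epsilon_n\downarrow0$ and $f\in H^p_\lambda(\RR)$ with $f_{\epsilon_n}\to f$ weak-$\ast$ and $\|f\|_{H^p_\lambda}\le\liminf\|f_{\epsilon_n}\|_{H^p_\lambda}\le C\|u^*_\nabla\|_{L^p_\lambda}$. Pairing with the kernel $t\mapsto(\tau_xP_y)(-t)$ turns $u(x,y+\epsilon_n)=\langle f_{\epsilon_n},(\tau_xP_y)(-\cdot)\rangle_\lambda$ into $u(x,y)=(Pf)(x,y)$ in the limit, i.e. $u=f\ast_\lambda P_y$; this is exactly (\ref{tan14}), the non-tangential maximal function of $u-Pf$ being identically zero. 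Finally $u=Pf$ gives $u^*_\nabla=P^*_\nabla f$, and the equivalence $\|f\|_{H^p_\lambda}\sim\|P^*_\nabla f\|_{L^p_\lambda}$ from Proposition \ref{s4} yields $\|u^*_\nabla\|_{L^p_\lambda}\sim\|f\|_{H^p_\lambda}$.

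I expect the two limit passages to be the main obstacle. In Case 1 one must justify transforming $\triangle_\lambda u=0$ slice-by-slice and solving the ODE distributionally, which amounts to a Poisson-uniqueness principle for $L^2_\lambda$-bounded $\lambda$-harmonic functions. The genuinely delicate point, however, is the weak-$\ast$ compactness argument in Case 2 for $p<1$: it requires that $H^p_\lambda(\RR)$ really be a dual space and that the Poisson kernel $(\tau_xP_y)(-\cdot)$ be an admissible predual test function, so confirming the necessary smoothness and decay of $(\tau_xP_y)(-\cdot)$ via Propositions \ref{Poisson-a} and \ref{D-translation-a}, and that evaluation against it is weak-$\ast$ continuous, is the technical core of the proof.
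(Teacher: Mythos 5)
A preliminary remark: this paper does not actually prove Proposition \ref{s3}; it is imported verbatim from the reference \cite{hu}, so there is no in-paper argument to compare yours against. Judged on its own terms, your Case 1 follows the standard and essentially correct route (slice-wise Dunkl transform, the ODE $\partial_y^2\widehat u=\xi^2\widehat u$, killing the growing mode via the uniform $L^2_\lambda$ bound, identifying $f=\SF_\lambda^{-1}A$), modulo the justification of transforming the equation slice by slice, which you flag yourself and which is routine.

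The genuine gap is the compactness step in Case 2. You extract $f$ as a weak-$*$ limit of the bounded family $f_\epsilon=u(\cdot,\epsilon)$ via Banach--Alaoglu in the dual of $Lip(1/p-1)$. Three things go wrong. First, $Lip(1/p-1)$ is not separable, so Banach--Alaoglu yields subnets, not sequences. Second, and more seriously, the weak-$*$ limit is a priori only a bounded linear functional on $Lip(1/p-1)$; Proposition \ref{s6} identifies $H^p$ with those functionals admitting an atomic decomposition with $\sum|\lambda_n|^p<\infty$, and nothing in your argument shows that the limit functional again admits such a decomposition with controlled coefficients, nor that $\|\cdot\|_{H^p_\lambda}$ is lower semicontinuous under weak-$*$ convergence --- that Fatou-type property of $H^p_\lambda$ is essentially of the same depth as the statement you are proving and cannot simply be invoked. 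Third, in this paper $H^p_\lambda(\RR)$ is \emph{defined} (Definition \ref{o1}) as the abstract completion of $\widetilde H^p_\lambda(\RR)$ in the quasi-norm $\|P^*_\nabla(\cdot)\|_{L^p_\lambda}$, so an element of it should be produced as a Cauchy family, not as a functional. One standard repair (Fefferman--Stein, Macias--Segovia) is to show directly that $\{u(\cdot,\epsilon)\}$ is Cauchy in that quasi-norm as $\epsilon\to0+$, using the semigroup identity $u(\cdot,\epsilon_1)=P_{\epsilon_1-\epsilon_2}\ast_\lambda u(\cdot,\epsilon_2)$, the domination $\sup_{|x-s|<y}\big|u(s,y+\epsilon_1)-u(s,y+\epsilon_2)\big|\le 2u^*_\nabla(x)$, and the a.e.\ existence of non-tangential boundary limits; the maximal-function (rather than atomic) characterization then places the limit $f$ in $H^p_\lambda(\RR)$, yields (\ref{tan14}), and gives $\|f\|_{H^p_\lambda}\sim\|u^*_\nabla\|_{L^p_\lambda}$ directly from $u^*_\nabla=P^*_\nabla f$. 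As written, your Case 2 does not close.
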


\begin{theorem}\cite{hu}\label{u2}
Let $u(x, y)$ to be a $\lambda$-harmonic function satisfying  $u_{\nabla}^* \in L^p_{\lambda}(\RR)$. For $\frac{2\lambda}{2\lambda+1}<p <\infty$, there exists a $\lambda$-analytic function $F(z)\in H_{\lambda}^p(\RR_+^2)$ satisfying $u(x, y)=ReF(z)$ and
$$\|F\|_{H_{\lambda}^p(\RR_+^2)}\sim \|u_{\nabla}^*\|_{L_{\lambda}^p(\RR)}.$$
\end{theorem}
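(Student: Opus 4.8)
\emph{Proof proposal.} The plan is to realize $u$ as the $\lambda$-Poisson integral of a boundary datum $f$ in the real Hardy space $H^p_\lambda(\RR)$, to take the harmonic conjugate $v$ to be the conjugate $\lambda$-Poisson integral $Qf$, and then to estimate the resulting $\lambda$-analytic function $F=Pf+iQf=u+iv$. First I would invoke Proposition \ref{s3} (case 2): since $u$ is $\lambda$-harmonic with $u^*_\nabla\in L^p_\lambda(\RR)$, there is $f\in H^p_\lambda(\RR)$ with $u=Pf$ on $\RR_+^2$ and $\|u^*_\nabla\|_{L^p_\lambda}\sim\|f\|_{H^p_\lambda(\RR)}$; for $p>1$ the same holds with $f=\lim_{y\to0+}u(\cdot,y)\in L^p_\lambda(\RR)=H^p_\lambda(\RR)$ by Proposition \ref{s5} and the $(p,p)$-boundedness of $P^*_\nabla$ (Proposition \ref{Poisson-conjugate-CR}(iv)). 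Setting $v=Qf$, Proposition \ref{Poisson-conjugate-CR}(i) shows $(u,v)$ satisfies the $\lambda$-Cauchy--Riemann equations (\ref{a c r0}), so $F=u+iv$ is $\lambda$-analytic with $u=\operatorname{Re}F$, as required.

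It then remains to establish the two-sided estimate $\|F\|_{H^p_\lambda(\RR_+^2)}\sim\|u^*_\nabla\|_{L^p_\lambda}$. The lower bound is immediate: since $u=\operatorname{Re}F$ we have $|u(x,y)|\le|F(x,y)|$, hence $u^*_\nabla\le F^*_\nabla$ pointwise, and Theorem \ref{ss}(iv), valid throughout $p>p_0$, gives $\|u^*_\nabla\|_{L^p_\lambda}\le\|F^*_\nabla\|_{L^p_\lambda}\sim\|F\|_{H^p_\lambda(\RR_+^2)}$. Thus the entire difficulty is the upper bound, which amounts to controlling the conjugate $v=Qf$ by $u$, that is, to the boundedness of the map $f\mapsto Pf+iQf$ from $H^p_\lambda(\RR)$ into $H^p_\lambda(\RR_+^2)$.

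For $p>1$ this is routine: by Proposition \ref{Poisson-conjugate-CR}(iii) the maximal conjugate operator $Q^*_\nabla$ is of $(p,p)$ type, so $F^*_\nabla\le P^*_\nabla f+Q^*_\nabla f$ lies in $L^p_\lambda(\RR)$ with $\|F^*_\nabla\|_{L^p_\lambda}\lesssim\|f\|_{L^p_\lambda}\sim\|u^*_\nabla\|_{L^p_\lambda}$, and Theorem \ref{ss}(iv) upgrades this to the desired $H^p_\lambda(\RR_+^2)$ bound. For $\frac{1}{1+\gamma_\lambda}<p\le1$ the operator $Q$ is no longer $L^p$-bounded, so I would pass to the atomic decomposition of Proposition \ref{s4}: write $f=\sum_n\lambda_n a_n$ with $\sum_n|\lambda_n|^p\sim\|f\|^p_{H^p_\lambda(\RR)}$, each $a_n$ a $p_\lambda$-atom, and set $F=\sum_n\lambda_n(Pa_n+iQa_n)$. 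By the $p$-subadditivity of $\|\cdot\|^p_{H^p_\lambda(\RR_+^2)}$ it then suffices to prove a uniform bound $\|Pa+iQa\|_{H^p_\lambda(\RR_+^2)}\le C$ for a single atom $a$; summing yields $\|F\|^p_{H^p_\lambda(\RR_+^2)}\lesssim\sum_n|\lambda_n|^p\sim\|u^*_\nabla\|^p_{L^p_\lambda}$, which together with the lower bound closes the argument.

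The main obstacle is precisely this single-atom estimate. Fixing an atom $a$ with $\operatorname{supp}a\subseteq I$, $\|a\|_{L^\infty_\lambda}\lesssim|I|_\lambda^{-1/p}$ and $\int a(t)|t|^{2\lambda}dt=0$, one must show that $Pa+iQa$ behaves like an $H^p_\lambda(\RR_+^2)$-molecule, namely that $(Pa+iQa)^*_\nabla\in L^p_\lambda(\RR)$ with norm independent of $a$. Over a fixed dilate of $I$ this follows from the $L^2_\lambda$-boundedness of $Q^*_\nabla$ (Proposition \ref{Poisson-conjugate-CR}(iii)) together with H\"older's inequality; away from $I$ one must exploit the vanishing moment of $a$ against the smoothness and decay of the conjugate $\lambda$-Poisson kernel $(\tau_xQ_y)(-t)$ of Proposition \ref{conjugate-Poisson-a}. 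The genuine difficulty, absent classically, is that $\tau_x$ is non-local, is not positivity-preserving, and couples $t$ with $-t$, so the kernel estimates must be carried out through the explicit integral representation (\ref{D-Poisson-ker-11}) and its conjugate, with particular care near the reflected point $x=-t$; this is exactly the kernel analysis underlying the paper's Theorem \ref{us9}. Finally, I should note that the stated range descends below $\frac{1}{1+\gamma_\lambda}$ to $p_0$, where single-moment atoms no longer suffice; there one must either enlarge the atom class by imposing higher-order vanishing moments adapted to $Lip(1/p-1)$, or argue directly from the $\lambda$-subharmonicity of $|F|^{p_0}$ and the harmonic majorization of Theorem \ref{majorization-2}.
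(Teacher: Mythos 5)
First, a point of comparison: this paper does not actually prove Theorem \ref{u2} --- it is imported wholesale from \cite{hu} --- so there is no in-paper argument to measure you against. Judged on its own terms, your outline has the right general shape (realize $u=Pf$ with $f\in H^p_\lambda(\RR)$ via Proposition \ref{s3}, take $v=Qf$, prove the lower bound from $u^*_\nabla\le F^*_\nabla$ and Theorem \ref{ss}(iv), and reduce the upper bound to a uniform bound on a single atom), but it contains two genuine gaps. The first is that the entire content of the theorem for $p\le1$ is the single-atom estimate $\|Pa+iQa\|_{H^p_\lambda(\RR^2_+)}\le C$, and you do not prove it: you correctly name the obstacles (non-locality of $\tau_x$, the reflected point $x=-t$, lack of positivity) but the actual cancellation estimate for the conjugate kernel $(\tau_xQ_y)(-t)$ against the vanishing moment is never carried out. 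Note that you cannot borrow it from the paper's Propositions \ref{us6}--\ref{us8} or Theorem \ref{us9}, since those results are themselves deduced \emph{from} Theorem \ref{u2}; citing that machinery here would be circular. The analogous computation the paper does perform (Proposition \ref{us8}) treats only the Poisson kernel $(\tau_xP_y)(-t)$, whose conjugate analogue has worse decay (the factor $y$ in the numerator is replaced by $x-t$), so the estimate is not a formal repetition.

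The second gap is the range. The theorem is asserted for all $p>p_0=\frac{2\lambda}{2\lambda+1}$, but every tool you invoke on the real-line side --- Proposition \ref{s3} (to produce $f$ at all), Proposition \ref{s4} (the atomic decomposition), and the identification of $H^p_\lambda(\RR)$ as a homogeneous Hardy space --- is only available for $p>\frac{1}{1+\gamma_\lambda}=\frac{4\lambda+2}{4\lambda+3}$, which is strictly larger than $p_0=\frac{4\lambda}{4\lambda+2}$. So your argument, even if the atom estimate were completed, proves a strictly weaker statement; the band $p_0<p\le\frac{1}{1+\gamma_\lambda}$ is untouched. You flag this yourself in the last sentence, but the two remedies you mention are not interchangeable: higher-moment atoms would require redeveloping the atomic theory of \cite{MS2} in this setting, whereas the harmonic-majorization route through the $\lambda$-subharmonicity of $|F|^{p_0}$ (Theorem \ref{majorization-2}) is the one compatible with the stated range and is almost certainly how \cite{hu} proceeds. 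As written, then, the proposal is a plausible strategy for the restricted range with its key lemma missing, not a proof of the theorem as stated.
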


\section{The $\lambda$-Hilbert transform}\label{Hilbert}
In this section, we will discuss the Definition of $\lambda$-Hilbert transform on the Real Hardy Spaces associated with the Dunkl setting $H_{\lambda}^{p}(\RR)$.

\begin{proposition}\cite{ZhongKai Li 3}\label{u1}\label{uu6}
For $1<p<\infty$, there is a constant $A_p$ such that for arbitrary $f(x)\in L_\lambda^p(\RR)$, $\|(Qf)(\cdot,y)\|_{L_\lambda^p(\RR)}\le A_p\|f\|_{L_\lambda^p(\RR)}$, there exists a function in $L_\lambda^p(\RR)$, denoted by $\SH_\lambda f$ such that
$(Qf)(\cdot,y)$ converge to $\SH_\lambda f$ as $y\rightarrow0+$, both in $L_\lambda^p(\RR)$ norm and almost everywhere nontangentially, and we have $\|\SH_\lambda f\|_{L_\lambda^p(\RR)}\le A_p\|f\|_{L_\lambda^p(\RR)}$. Moreover,
$$(Qf)(x,y)=[P(\SH_\lambda f)](x, y),$$
and for $1<p\leq2$,
$$[\SF_{\lambda}(\SH_\lambda f)](\xi)=-i(sgn \xi)(\SF_{\lambda}f)(\xi).$$

\end{proposition}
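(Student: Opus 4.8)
The plan is to transplant the classical theory of the conjugate Poisson integral and the conjugate function into the Dunkl setting, using the $L^2_\lambda$ spectral picture as the anchor and then propagating to $L^p_\lambda$ by means of the maximal bound already recorded in Proposition~\ref{Poisson-conjugate-CR}(iii). First I would dispose of the uniform-in-$y$ estimate: since $|(Qf)(x,y)|\le (Q^*_\nabla f)(x)$ pointwise, Proposition~\ref{Poisson-conjugate-CR}(iii) yields
\[
\|(Qf)(\cdot,y)\|_{L^p_\lambda}\le \|Q^*_\nabla f\|_{L^p_\lambda}\le A_p'\|f\|_{L^p_\lambda}
\]
for every $y>0$ and $1<p<\infty$, which is the first assertion.

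Next I would build $\SH_\lambda$ on $L^2_\lambda$. Starting from the kernel identity in Proposition~\ref{conjugate-Poisson-a}(ii) and interchanging the order of integration (justified first for $f\in L^1_\lambda(\RR)\cap L^2_\lambda(\RR)$ and then by density), one gets the spectral representation
\[
(Qf)(x,y)=-ic_\lambda\int_\RR ({\rm sgn}\,\xi)\,e^{-y|\xi|}(\SF_\lambda f)(\xi)\,E_\lambda(ix\xi)\,|\xi|^{2\lambda}\,d\xi,
\]
so that, by the inversion formula of Proposition~\ref{D-transform-a}(ii), $\SF_\lambda[(Qf)(\cdot,y)](\xi)=-i({\rm sgn}\,\xi)e^{-y|\xi|}(\SF_\lambda f)(\xi)$. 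Since $|-i({\rm sgn}\,\xi)|=1$, the function $-i({\rm sgn}\,\xi)(\SF_\lambda f)(\xi)$ lies in $L^2_\lambda(\RR)$ with the same norm as $\SF_\lambda f$, and I define $\SH_\lambda f\in L^2_\lambda(\RR)$ by $\SF_\lambda(\SH_\lambda f)(\xi)=-i({\rm sgn}\,\xi)(\SF_\lambda f)(\xi)$. By Plancherel (Proposition~\ref{D-transform-a}(v)) and dominated convergence on the Fourier side, $(Qf)(\cdot,y)\to \SH_\lambda f$ in $L^2_\lambda(\RR)$ as $y\to0+$, which also settles the multiplier formula for $p=2$. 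Comparing Dunkl transforms, $\SF_\lambda[(Qf)(\cdot,y)]=e^{-y|\xi|}\SF_\lambda(\SH_\lambda f)=\SF_\lambda[P(\SH_\lambda f)(\cdot,y)]$, so injectivity of $\SF_\lambda$ gives the crucial identity $(Qf)(x,y)=[P(\SH_\lambda f)](x,y)$.

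With this identity in hand the $L^p_\lambda$ theory follows. For $f\in L^p_\lambda(\RR)\cap L^2_\lambda(\RR)$ (a dense class in $L^p_\lambda(\RR)$), the Poisson a.e.\ convergence theorem (Proposition~\ref{Poisson-conjugate-CR}(iv),(v)) applied to $P(\SH_\lambda f)$ shows that $(Qf)(\cdot,y)=P(\SH_\lambda f)(\cdot,y)$ converges to $\SH_\lambda f$ almost everywhere nontangentially; then Fatou's lemma together with the uniform bound of the first step yields
\[
\|\SH_\lambda f\|_{L^p_\lambda}^p\le \liminf_{y\to0+}\|(Qf)(\cdot,y)\|_{L^p_\lambda}^p\le (A_p')^p\|f\|_{L^p_\lambda}^p,
\]
so $\SH_\lambda$ is bounded on the dense class and extends to all of $L^p_\lambda(\RR)$ with $\|\SH_\lambda f\|_{L^p_\lambda}\le A_p\|f\|_{L^p_\lambda}$. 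Norm convergence $(Qf)(\cdot,y)\to\SH_\lambda f$ in $L^p_\lambda(\RR)$ is then immediate from $(Qf)(\cdot,y)=P(\SH_\lambda f)(\cdot,y)$ and Proposition~\ref{Poisson-conjugate-CR}(v); the a.e.\ nontangential convergence and the identity $(Qf)=P(\SH_\lambda f)$ pass to general $f\in L^p_\lambda(\RR)$ through the approximation, controlled by the maximal bound for $Q^*_\nabla$. Finally, for $1<p<2$ the multiplier formula is obtained by approximating $f$ in $L^p_\lambda(\RR)$ by $f_n\in L^p_\lambda(\RR)\cap L^2_\lambda(\RR)$ and passing to the limit on the Fourier side using Hausdorff--Young (Corollary~\ref{Hausdorff-Young}): the functions $\SF_\lambda(\SH_\lambda f_n)=-i({\rm sgn}\,\xi)\SF_\lambda f_n$ converge in $L^{p'}_\lambda(\RR)$ to both $\SF_\lambda(\SH_\lambda f)$ and $-i({\rm sgn}\,\xi)\SF_\lambda f$, forcing them to coincide.

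I expect the main obstacle to be the passage from the clean $L^2_\lambda$ picture to the pointwise (a.e.\ nontangential) convergence statement for $1<p<\infty$: this rests entirely on recognizing $(Qf)(\cdot,y)$ as the \emph{Poisson} integral of the already-constructed boundary function $\SH_\lambda f$ and then invoking the Poisson a.e.\ convergence theorem, so the care lies in justifying the Fubini interchange in the spectral representation and in controlling the density argument uniformly via $Q^*_\nabla$. Everything else is a routine transcription of the Euclidean conjugate-function argument, with the maximal estimate of Proposition~\ref{Poisson-conjugate-CR}(iii) playing the role of the $L^p$-boundedness of the conjugate Poisson maximal operator.
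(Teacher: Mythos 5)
Your proposal is correct, and it is essentially the canonical argument: anchor the definition of $\SH_\lambda$ on $L_\lambda^2(\RR)$ via the multiplier $-i(\mathrm{sgn}\,\xi)$, identify $(Qf)(\cdot,y)$ with the $\lambda$-Poisson integral of the boundary function, and then propagate to $L_\lambda^p(\RR)$ by density using the maximal bound for $Q^*_\nabla$ from Proposition~\ref{Poisson-conjugate-CR}(iii) and Hausdorff--Young for the multiplier identity when $1<p<2$. Note that the paper itself offers no proof of this proposition --- it is quoted verbatim from \cite{ZhongKai Li 3} --- so there is nothing internal to compare against; your reconstruction matches the approach taken in that reference, the only step you should make fully explicit being the standard deduction of a.e.\ nontangential convergence of $P(\SH_\lambda f)(\cdot,y)$ from the $(p,p)$ bound on $P^*_\nabla$ together with convergence on a dense subclass.
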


\begin{proposition}\cite{ZhongKai Li 3}\label{uu}
For $1\leq p<\infty$, $f(x)\in L_\lambda^p(\RR)$, its $\lambda$-Hilbert transform
$$(\SH_\lambda f)(x)=\lim_{y\rightarrow0+}(Qf)(x, y)$$
exists almost everywhere, and the mapping $f\rightarrow \SH_\lambda f$ is strongly-$(p, p)$ bounded for $1< p<\infty$ and weakly-$(1, 1)$ bounded.

\end{proposition}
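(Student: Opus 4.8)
The plan is to exhibit $\SH_\lambda$ as a Calderón--Zygmund singular integral on the space of homogeneous type $(\RR,|\cdot|,d\mu)$, where $d\mu=|x|^{2\lambda}dx$ is a doubling measure, and then invoke the general Calderón--Zygmund machinery. First I would observe that for $1<p<\infty$ most of the assertion is already available: Proposition~\ref{Poisson-conjugate-CR}(iii) gives $\|Q^*_\nabla f\|_{L_\lambda^p}\lesssim\|f\|_{L_\lambda^p}$, and Proposition~\ref{u1} gives both the strong $(p,p)$ bound for $\SH_\lambda$ and the a.e.\ nontangential convergence of $(Qf)(\cdot,y)$ to $\SH_\lambda f$. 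Hence the genuinely new content is the weak $(1,1)$ bound, together with the a.e.\ existence of $\lim_{y\to0+}(Qf)(x,y)$ when $p=1$; once these are in hand the full statement follows.

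Second, I would record the two inputs the Calderón--Zygmund theorem requires. The $L^2$ boundedness is immediate from the multiplier identity $[\SF_\lambda(\SH_\lambda f)](\xi)=-i(\mathrm{sgn}\,\xi)(\SF_\lambda f)(\xi)$ of Proposition~\ref{u1} combined with Plancherel (Proposition~\ref{D-transform-a}(v)), giving $\|\SH_\lambda f\|_{L_\lambda^2}=\|f\|_{L_\lambda^2}$. The kernel is identified by passing to the boundary in the conjugate $\lambda$-Poisson integral: for $f\in\SD(\RR)$ and $x\notin\mathrm{supp}\,f$ one has $\SH_\lambda f(x)=c_\lambda\int_\RR f(t)K(x,t)|t|^{2\lambda}dt$ with $K(x,t)=\lim_{y\to0+}(\tau_xQ_y)(-t)$, and I would use the explicit representation of $(\tau_xQ_y)(-t)$ in Proposition~\ref{conjugate-Poisson-a}(i) to compute this limit off the diagonal.

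Third---and this is the main obstacle---I must verify the size and Hörmander regularity estimates for $K$. In the $\theta$-integral of Proposition~\ref{conjugate-Poisson-a}(i) the kernel splits into a term carrying the factor $(x-t)$, which near the diagonal behaves like the classical Hilbert kernel $\tfrac1\pi(x-t)^{-1}$, and reflection terms carrying $\mathrm{sgn}(xt)\cos\theta$. For the size estimate I expect $|K(x,t)|\lesssim \mu(B(x,|x-t|))^{-1}$, uniformly in $y$ for the truncated kernels. The delicate point is the smoothness bound $\int_{|x-x'|\ge 2|t-x'|}|K(x,t)-K(x',t)|\,d\mu(x)\lesssim 1$: one must control the derivative of the $\theta$-integrand, treat separately the regions where $x$ and $t$ share or oppose signs, and handle the degeneracy of $|x|^{2\lambda}$ at the origin. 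I would split $\RR$ into a part comparable to the classical kernel, where the Euclidean estimates transfer after accounting for the doubling weight, and a smooth remainder estimated directly; obtaining these bounds uniformly in the truncation parameter $y$ is what makes the step technical.

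Finally, with $L^2$ boundedness and the kernel estimates in place, the Calderón--Zygmund theorem on $(\RR,|\cdot|,d\mu)$ yields the weak $(1,1)$ bound for $\SH_\lambda$ and, via the Calderón--Zygmund decomposition of $f$ at height $\alpha$, the weak $(1,1)$ bound for the maximal operator $Q^*f(x)=\sup_{y>0}|(Qf)(x,y)|$ (alternatively through a Cotlar-type inequality comparing $Q^*$ with the maximal function of $\SH_\lambda f$ and the Hardy--Littlewood maximal function). The a.e.\ existence of $\lim_{y\to0+}(Qf)(x,y)$ for $f\in L_\lambda^1(\RR)$ then follows from the standard oscillation argument: the limit exists for $f$ in the dense class $\SD(\RR)$ (indeed $\SS(\RR)$), the oscillation functional is dominated by $2Q^*f$, and the weak $(1,1)$ bound for $Q^*$ forces the oscillation to vanish a.e., giving convergence for every $f\in L_\lambda^1(\RR)$ and hence, by \mbox{Proposition~\ref{u1}}, for every $f\in L_\lambda^p(\RR)$ with $1\le p<\infty$.
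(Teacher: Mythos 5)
The paper itself offers no proof of this proposition: it is quoted verbatim from \cite{ZhongKai Li 3}, so there is nothing internal to compare against. Your Calder\'on--Zygmund strategy on the homogeneous space $(\RR,|\cdot|,|x|^{2\lambda}dx)$ is in fact the route taken in that reference, and your reduction is correctly organized: the strong $(p,p)$ bounds and a.e.\ nontangential convergence for $1<p<\infty$ are already in Propositions~\ref{u1} and~\ref{Poisson-conjugate-CR}(iii), so the real content is the weak $(1,1)$ bound for $\SH_\lambda$ and for $Q^*$, plus the density/oscillation argument for a.e.\ convergence at $p=1$. The $L^2$ input via the multiplier $-i\,{\rm sgn}\,\xi$ and Plancherel is also right.

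There is, however, one concrete error in the key technical step. The pointwise size bound you ``expect,'' $|K(x,t)|\lesssim \mu\bigl(B(x,|x-t|)\bigr)^{-1}$, is false. Taking $t\to -x$ in the kernel of Proposition~\ref{Poisson-b}, one has $x^2+t^2-2xts=2x^2(1+s)$ and the $s$-integrand becomes $(1+s)^{-1}(1-s)^{\lambda-1}$, which diverges logarithmically at $s=-1$; hence $h(x,t)$ blows up (logarithmically) along the reflected diagonal $t=-x$, where $\mu\bigl(B(x,|x-t|)\bigr)^{-1}\sim |x|^{-2\lambda-1}$ stays finite. This second singularity is precisely the characteristic difficulty of the Dunkl setting and cannot be absorbed into ``a smooth remainder estimated directly.'' The correct size statement is of the form $|h(x,t)|\lesssim |x-t|^{-1}(|x|+|t|)^{-2\lambda}$ away from \emph{both} $t=x$ and $t=-x$, with an additional integrable logarithmic term near $t=-x$; since the H\"ormander condition is an integral condition, it survives this singularity, but its verification must explicitly split off the region $|x+t_0|\lesssim |t-t_0|$ and use the integrability of the logarithm there (this is where Formula~(\ref{Basic-Es0}) and estimates like Proposition~\ref{estimate a} enter). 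With that repair the Calder\'on--Zygmund theorem applies as you describe, and the remainder of your argument (Cotlar-type control of $Q^*$, density of $\SS(\RR)$, oscillation bounded by $2Q^*f$) is standard and sound.
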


\begin{proposition}\cite{ZhongKai Li 3} \label{Poisson-b}
For $f\in L_{\lambda}^1(\RR)\bigcap L_{\lambda}^{\infty}(\RR)$, $x\in\RR, \ y\in(0,\infty)$,  $\lambda$-Hilbert transform can be given by
\begin{eqnarray*}
\SH_\lambda f(x)=c_{\lambda}\lim_{\epsilon\rightarrow0+}\int_{|t-x|>\epsilon} f(t)h(x,t)|t|^{2\lambda}dt\ \ a.e.\ x\in\RR
\end{eqnarray*}
where
the $\lambda$-Hilbert kernel  is defined as :
$$h(x,t)=\frac{\lambda\Gamma(\lambda+1/2)}{2^{-\lambda-1/2}\pi}(x-t)\int_{-1}^1\frac{(1+s)(1-s^2)^{\lambda-1}
}{(x^2+t^2-2xts)^{\lambda+1}}ds.$$

\end{proposition}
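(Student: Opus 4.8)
The plan is to start from the integral representation of the conjugate $\lambda$-Poisson integral and pass to the boundary limit, identifying it with the principal-value singular integral. By Proposition \ref{uu} the limit $\SH_\lambda f(x)=\lim_{y\to0+}(Qf)(x,y)$ already exists for a.e. $x$ when $f\in L_{\lambda}^1(\RR)\cap L_{\lambda}^{\infty}(\RR)$, so the content of the statement is the explicit formula, not the existence of the limit. The first step is to identify the kernel: I would show that for $x\ne t$ (and $xt\ne0$) the $\lambda$-Hilbert kernel $h(x,t)$ is exactly the pointwise limit $\lim_{y\to0+}(\tau_xQ_y)(-t)$ of the conjugate $\lambda$-Poisson kernel of Proposition \ref{conjugate-Poisson-a}(i). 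Setting $y=0$ in that representation and substituting $s=\cos\theta$ converts $\int_0^\pi(\cdots)\sin^{2\lambda-1}\theta\,d\theta$ into $\int_{-1}^1(\cdots)(1-s^2)^{\lambda-1}ds$; in the case $xt>0$ this matches $h(x,t)$ directly, and in the case $xt<0$ the substitution $s\mapsto-s$ together with $|xt|=-xt$ reconciles the two forms, the factor $(1+s)$, which vanishes at $s=-1$, being what removes the would-be singularity at $t=-x$.

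Second, I would record the size of the kernel. A scaling analysis of the $s$-integral near its only genuine singularity $s=1$ (where the denominator degenerates to $(x-t)^2$ as $t\to x$) gives $\int_{-1}^1(1+s)(1-s^2)^{\lambda-1}(x^2+t^2-2xts)^{-\lambda-1}ds\sim |x|^{-2\lambda}|x-t|^{-2}$, hence $h(x,t)|t|^{2\lambda}\sim {\rm sgn}(x-t)/|x-t|$ as $t\to x$, while $h(x,t)$ stays locally bounded near $t=0$ and $t=-x$ and decays like $|t|^{-2\lambda-1}$ at infinity. Thus $h(x,\cdot)$ is a Calderón--Zygmund--type kernel with an odd principal part in $(x-t)$, which is why the truncated integral requires a principal value and why it converges. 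The same computation carried out with the denominator exponent $\lambda+2$ yields the crucial difference bound
$$\left|(\tau_xQ_y)(-t)-h(x,t)\right|\lesssim\frac{1}{|x|^{2\lambda}}\cdot\frac{y^2}{|x-t|\,(|x-t|^2+y^2)},\qquad |t-x|>y,$$
together with $|(\tau_xQ_y)(-t)|\lesssim |x|^{-2\lambda}|x-t|\,(|x-t|^2+y^2)^{-1}$ near the diagonal.

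Third comes the main decomposition. Writing $\SH_\lambda f(x)$ via $\lim_{y\to0+}(Qf)(x,y)$ and using that $\int_{|t-x|>y}$ with $y\to0+$ is literally the principal value, it suffices to show
$$(Qf)(x,y)-c_{\lambda}\int_{|t-x|>y}f(t)h(x,t)|t|^{2\lambda}dt\longrightarrow 0\qquad\text{a.e.}$$
Splitting the left side into the regions $|t-x|\le y$ and $|t-x|>y$ and replacing $f(t)$ by $f(x)+(f(t)-f(x))$, the two terms carrying $f(t)-f(x)$ are controlled at every Lebesgue point of $f$ by the kernel bounds above through a dyadic-shell estimate (here one also uses that $|t|^{2\lambda}$ is comparable to $|x|^{2\lambda}$ near $t=x\ne0$). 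The two remaining terms combine, after invoking the exact cancellation $\int_\RR(\tau_xQ_y)(-t)|t|^{2\lambda}dt=0$ (oddness of $Q_y$, read as a symmetric principal value since $Q_y\notin L_{\lambda}^1(\RR)$), into $-f(x)\int_{|t-x|>y}h(x,t)|t|^{2\lambda}dt$, and this last integral tends to $0$ by the odd symmetry of the principal part of $h$ in $(x-t)$ together with the cancellation at infinity of its tails, which behave like a constant multiple of $1/t$.

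I expect the kernel estimates and this final cancellation to be the main obstacles. The difficulty is that $(\tau_xQ_y)(-t)$ is not an ordinary convolution kernel but is built from the $\lambda$-translation, so the classical dilation and translation bookkeeping is unavailable and every estimate must be extracted from the explicit $\theta$- (or $s$-) integral of Proposition \ref{conjugate-Poisson-a}(i), with separate care for the reflection contributions and for the points $t=0$ and $t=-x$. Establishing that $\int_{|t-x|>y}h(x,t)|t|^{2\lambda}dt\to0$ is delicate precisely because the integrand is only conditionally integrable --- it behaves like a constant multiple of $1/t$ at infinity --- so both the singularity at $t=x$ and the tails at infinity must be handled through symmetric cancellation rather than absolute convergence.
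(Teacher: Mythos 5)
The paper does not prove this proposition at all --- it is imported from \cite{ZhongKai Li 3} as a black box --- so there is no in-paper argument to compare yours against; I can only judge the proposal on its own. Your overall strategy is the right one and is essentially how results of this type are established in the source: identify $h(x,t)$ as the $y\to0+$ limit of the conjugate $\lambda$-Poisson kernel of Proposition \ref{conjugate-Poisson-a}(i) via $s=\cos\theta$ (your reconciliation of the two cases ${\rm sgn}(xt)=\pm1$ is correct), extract size and difference bounds from the $s$-integral (Proposition \ref{estimate a} and its analogue with exponent $\lambda+2$), take the existence of $\lim_{y\to0+}(Qf)(x,y)$ from Proposition \ref{uu}, and run a Lebesgue-point argument on the difference between $(Qf)(x,y)$ and the truncated singular integral.

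The genuine soft spots are your two cancellation claims, whose stated justifications do not survive the Dunkl setting even though the claims themselves are true. First, $\int_{\RR}(\tau_xQ_y)(-t)|t|^{2\lambda}dt=0$ cannot be obtained from ``oddness of $Q_y$'': oddness handles only the untranslated kernel, whereas $(\tau_xQ_y)(-t)\,|t|^{2\lambda}$ is not odd in $t$ --- its expansion at infinity is $-c/t+O(|t|^{-2})$, so the symmetric principal value exists but its value is not forced by symmetry; it must be computed, e.g.\ from $\SF_\lambda(\tau_xQ_y)(\xi)=-i({\rm sgn}\,\xi)e^{-y|\xi|}E_\lambda(ix\xi)\to0$ as $\xi\to0$ together with an Abel-type summation argument, or by explicitly tracking the absolutely integrable corrections. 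Second, and for the same reason, $\int_{|t-x|>y}h(x,t)|t|^{2\lambda}dt\to0$ does not follow from ``odd symmetry of the principal part of $h$ in $(x-t)$'': the weight $|t|^{2\lambda}$ and the reflection contribution destroy antisymmetry about $t=x$, and what is actually needed is $\mathrm{p.v.}\int_\RR h(x,t)|t|^{2\lambda}dt=0$, which should be \emph{derived} from the first cancellation by letting $y\to0$ under your uniform difference bounds rather than asserted independently. Finally, note that the bound coming from Proposition \ref{estimate a} degenerates like $(|x|-|t|)^{-2}$ at the reflected point $t=-x$ (where $x-t\neq0$); the factor $(1+s)$ rescues the pointwise identification of $h$ there, as you say, but the same care must be carried through the difference estimate $|(\tau_xQ_y)(-t)-h(x,t)|$, and the case $x=0$ should be explicitly discarded as a null set. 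None of this changes the architecture of your proof, but as written these steps are gaps, not proofs.
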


\begin{definition}\label{u3}
For $\frac{2\lambda}{2\lambda+1}< p\leq1$, by Proposition\,\ref{s3} and Theorem\,\ref{u2}, we could know that for  any $f\in H_{\lambda}^{p}(\RR)\bigcap L_{\lambda}^{1}(\RR)\bigcap L_{\lambda}^{2}(\RR)$, there is  a $\lambda$-analytic function $F=u+iv$, such that $f$ and $u$ satisfy  Proposition\,\ref{s3}. By Theorem\,\ref{ss}, we could know that  $\lim F(t,y)=F(x)$ exists as $(t,y)$
tends to $(x,0)$ almost everywhere nontangentially.  By Proposition\,\ref{uu} and Theorem\, \ref{ss}\,{\rm (iii)} , we will define the $\lambda$-Hilbert transform of $f\in H_{\lambda}^{p}(\RR)\bigcap L_{\lambda}^{1}(\RR)\bigcap L_{\lambda}^{2}(\RR)$ as a function $\SH_\lambda f$ satisfying the following Formulas\,(\ref{uu1},\,\ref{uu7}):
\begin{eqnarray}\label{uu1}
(\SH_\lambda f)(x)=\lim_{(t, y) \underrightarrow{\angle}(x, 0+)}v(t, y),\ \ \ a.e.x\in\RR,
\end{eqnarray}
\begin{eqnarray}\label{uu7}
\lim_{y\rightarrow0+}\bigg\|u(\cdot,y)+iv(\cdot,y)-f(\cdot)-i(\SH_\lambda f)(\cdot)\bigg\|_{L^p_{\lambda}}=0.
\end{eqnarray}
\end{definition}

\section{The Boundedness of $\lambda$-Hilbert transform}\label{bounded Hilbert}
Notice that for $p>1$, $H_{\lambda}^{p}(\RR)=L_{\lambda}^{p}(\RR)$, thus the $\lambda$-Hilbert transform on $H_{\lambda}^{p}(\RR)$ is in fact on the spaces  $L_{\lambda}^{p}(\RR)$.  By Proposition\,\ref{u1}, we could know that  the $\lambda$-Hilbert transform is bounded on  $H_{\lambda}^{p}(\RR)$ spaces when $p>1$.

 For $1 \geq p>\frac{1}{1+\gamma_\lambda}$, the $H_{\lambda}^{p}(\RR)$ are Homogeneous Hardy Spaces, thus the $\lambda$-Hilbert transform  on $H_{\lambda}^{p}(\RR)$ may have connection to the $p_\lambda$-atoms. Thus we will give an estimation of $\lambda$-Hilbert transform of the $p_\lambda$-atoms.

\begin{definition}\label{u5}
For $\frac{1}{1+\gamma_\lambda}<p\leq 1$,  $\displaystyle{\gamma_\lambda=\frac{1}{2(2\lambda+1)}}$, $H_{\lambda, F}^p(\RR)$ is defined as following
\begin{eqnarray*}
H_{\lambda, F}^p(\RR)&\triangleq&\left\{f(x)=\sum_{n=1}^N\lambda_{n} a_n(x)\ in\ H_{\lambda}^p(\RR)\ spaces: \|f\|_{H_{\lambda}^p(\RR)}^p\sim\sum_{n=1}^N|\lambda_n|^p<\infty \ \hbox{for}\ any\ N\in\NN,\right.\\
 & &  \{a_n(x)\} \ are\    p_\lambda-atoms. \bigg\}
\end{eqnarray*}
\end{definition}

\begin{proposition}\label{u6}
For $\frac{1}{1+\gamma_\lambda}<p\leq 1$,  $\displaystyle{\gamma_\lambda=\frac{1}{2(2\lambda+1)}}$, $H_{\lambda, F}^p(\RR)$ is dense in
$H_{\lambda}^p(\RR)$, and we could have $H_{\lambda, F}^p(\RR)\subset H_{\lambda}^p(\RR)\bigcap L_{\lambda}^1(\RR) \bigcap L_{\lambda}^{2}(\RR) \subset H_{\lambda}^p(\RR)$.
\end{proposition}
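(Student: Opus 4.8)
The statement bundles three assertions, of which the middle inclusion $H_{\lambda}^p(\RR)\cap L_{\lambda}^1(\RR)\cap L_{\lambda}^2(\RR)\subset H_{\lambda}^p(\RR)$ is trivial, so the real work is the first inclusion together with density. For the first inclusion I would reduce to a single atom by linearity. If $a$ is a $p_\lambda$-atom supported in an interval $I$, then combining the size bound $\|a\|_{L_{\lambda}^\infty}\lesssim|I|_{\lambda}^{-1/p}$ with the finiteness of the $\lambda$-measure of the support gives
\[
\|a\|_{L_{\lambda}^1}\le\|a\|_{L_{\lambda}^\infty}\,|I|_{\lambda}\lesssim|I|_{\lambda}^{1-1/p},\qquad \|a\|_{L_{\lambda}^2}\le\|a\|_{L_{\lambda}^\infty}\,|I|_{\lambda}^{1/2}\lesssim|I|_{\lambda}^{1/2-1/p},
\]
both finite. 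Hence every atom, and by linearity every finite combination $f=\sum_{n=1}^N\lambda_n a_n\in H_{\lambda,F}^p(\RR)$, lies in $L_{\lambda}^1(\RR)\cap L_{\lambda}^2(\RR)$; since such $f$ belongs to $H_{\lambda}^p(\RR)$ by Definition\,\ref{u5}, the inclusion $H_{\lambda,F}^p(\RR)\subset H_{\lambda}^p(\RR)\cap L_{\lambda}^1(\RR)\cap L_{\lambda}^2(\RR)$ follows.

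For density I would invoke the atomic decomposition of Proposition\,\ref{s4}: given $f\in H_{\lambda}^p(\RR)$, choose $p_\lambda$-atoms $a_n$ and scalars $\lambda_n$ with $f=\sum_n\lambda_n a_n$ in $H_{\lambda}^p(\RR)$ and $\sum_n|\lambda_n|^p\sim\|f\|_{H_{\lambda}^p}^p<\infty$. The truncations $f_N=\sum_{n=1}^N\lambda_n a_n$ are finite combinations of atoms. Using that $\|\cdot\|_{H_{\lambda}^p}^p$ is subadditive for $p\le1$ and that each atom satisfies $\|a_n\|_{H_{\lambda}^p}^p\le1$ (the quasi-norm being an infimum over atomic representations, Proposition\,\ref{s6}), I would estimate
\[
\|f-f_N\|_{H_{\lambda}^p}^p=\Big\|\sum_{n>N}\lambda_n a_n\Big\|_{H_{\lambda}^p}^p\le\sum_{n>N}|\lambda_n|^p\,\|a_n\|_{H_{\lambda}^p}^p\le\sum_{n>N}|\lambda_n|^p\longrightarrow0
\]
as $N\to\infty$, convergence being guaranteed by $\sum_n|\lambda_n|^p<\infty$. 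Thus $f_N\to f$ in $H_{\lambda}^p(\RR)$, which yields density.

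The principal care lies not in any single analytic estimate but in the structural bookkeeping of the truncation step. Because $\|\cdot\|_{H_{\lambda}^p}^p$ is only a $p$-quasi-norm when $p\le1$, I must use the $p$-triangle inequality, not the ordinary one, in the displayed tail estimate, and I must keep the atom normalization $\|a_n\|_{H_{\lambda}^p}^p\le1$ consistent throughout so that the tail $\sum_{n>N}|\lambda_n|^p$ genuinely controls $\|f-f_N\|_{H_{\lambda}^p}^p$. Finally, each truncation $f_N$ is a finite atomic combination and hence lies in $H_{\lambda}^p(\RR)$, so the norm equivalence demanded by Definition\,\ref{u5} is supplied by Proposition\,\ref{s4} applied to $f_N$ itself, with constants independent of $N$; this places $f_N\in H_{\lambda,F}^p(\RR)$ and completes the density argument.
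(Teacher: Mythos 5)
Your overall route coincides with the paper's: reduce the $L^1_\lambda\cap L^2_\lambda$ membership to a single atom, then use the atomic decomposition of Proposition\,\ref{s4} and the $p$-triangle inequality to show the truncations $f_N$ converge to $f$. The inclusion argument and the tail estimate $\|f-f_N\|_{H_\lambda^p}^p\le\sum_{n>N}|\lambda_n|^p\to0$ are both fine (the paper states the first part as ``easy to check'' and carries out the second essentially as you do).

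There is, however, a gap in your last step, where you place $f_N$ in $H_{\lambda,F}^p(\RR)$ by saying the norm equivalence ``is supplied by Proposition\,\ref{s4} applied to $f_N$ itself.'' Membership in $H_{\lambda,F}^p(\RR)$ as defined in Definition\,\ref{u5} requires the two-sided comparison $\|f_N\|_{H_\lambda^p}^p\sim\sum_{n=1}^N|\lambda_n|^p$ for the finite representation in hand. Proposition\,\ref{s4} only guarantees that \emph{some} atomic decomposition of $f_N$ (a priori infinite, and in general different from the inherited one) realizes the equivalence; it does not say that every decomposition does. Indeed the upper bound $\|f_N\|_{H_\lambda^p}^p\lesssim\sum_{n=1}^N|\lambda_n|^p$ is automatic from the infimum in Formula\,(\ref{usus1}), but the lower bound can fail for a particular finite combination because of cancellation (e.g.\ $\lambda a+(-\lambda)a=0$ has zero norm but nonzero coefficient sum). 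The paper closes this gap by restricting to $m$ large: once $\|f-f_m\|_{H_\lambda^p}^p\lesssim\varepsilon$ with $\varepsilon<\|f\|_{H_\lambda^p}/M$, one gets $\|f_m\|_{H_\lambda^p}\sim\|f\|_{H_\lambda^p}$ and $\sum_{n=1}^m|\lambda_n|^p\sim\sum_{n=1}^\infty|\lambda_n|^p\sim\|f\|_{H_\lambda^p}^p$, whence $\sum_{n=1}^m|\lambda_n|^p\sim\|f_m\|_{H_\lambda^p}^p$ and $f_m\in H_{\lambda,F}^p(\RR)$ for all sufficiently large $m$ --- which is all that density requires. You should replace your appeal to Proposition\,\ref{s4} at that point by this limiting argument (or an equivalent one).
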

\begin{proof}
It is easy to check that $H_{\lambda, F}^p(\RR) \subseteq L_{\lambda}^1(\RR)\bigcap L_{\lambda}^{2}(\RR)$, thus we could deduce that $H_{\lambda, F}^p(\RR)\subset H_{\lambda}^p(\RR)\bigcap L_{\lambda}^1(\RR) \bigcap L_{\lambda}^{2}(\RR) \subset H_{\lambda}^p(\RR)$.

By Proposition\,\ref{s4}, any $f\in H_{\lambda}^p(\RR)$ can be represented as
$$f(x)=\sum_{n=1}^{\infty}\lambda_{n} a_n(x)\ \ \hbox{in\ $H_{\lambda}^p(\RR)$\ spaces},$$
where $\{a_n(x)\}$ \ are\    $p_\lambda$-atoms, and $\{\lambda_n\}$ is a numerical sequence with
$\sum_n|\lambda_n|^p<\infty$. For any $m\in\NN$, we use $f_m(x)$ to denote as:
$f_m(x)=\sum_{n=1}^{m}\lambda_{n} a_n(x)$ in\ $H_{\lambda}^p(\RR)$\ spaces.
Thus it is easy to check that $f_m(x)\in  H_{\lambda}^p(\RR)$, then $f-f_m\in H_{\lambda}^p(\RR)$. Thus by Proposition\,\ref{s4} and Formula\,(\ref{usus1}), we could deduce that
\begin{eqnarray}\label{us4}
\|f-f_m\|_{H_{\lambda}^p(\RR)}\lesssim \sum_{i=m+1}^{\infty} |\lambda_i|^p.
\end{eqnarray}
With the fact $ \sum_{n=1}^{\infty}|\lambda_n|^p<\infty$, thus for any $\varepsilon>0$, $\exists N_\varepsilon>0$, such that for any $m>N_\varepsilon$
\begin{eqnarray}\label{us5}
 \sum_{i=m+1}^{\infty} |\lambda_i|^p\lesssim \varepsilon.
\end{eqnarray}
By Formula\,({\ref{us4}}) and Formula\,({\ref{us5}}), we could deduce that for any $m>N_\varepsilon$,
$$\|f-f_m\|_{H_{\lambda}^p(\RR)}\lesssim \varepsilon.$$
Then there exists some sufficiently large number $M$ depending only on $c_p$ and $C_p$ in Proposition\,\ref{s4},  such that when $\displaystyle{ \varepsilon< \frac{\|f\|_{H_{\lambda}^p(\RR)}}{M}}$,  for any $m>N_\varepsilon$, the following Formula holds:
$$\sum_{i=1}^{m} |\lambda_i|^p\sim \sum_{i=1}^{\infty} |\lambda_i|^p\sim \|f\|_{H_{\lambda}^p(\RR)}\sim\|f_m\|_{H_{\lambda}^p(\RR)}.$$
That is
$$\sum_{i=1}^{m} |\lambda_i|^p\sim\|f_m\|_{H_{\lambda}^p(\RR)}.$$
Thus we could deduce that $f_m\in H_{\lambda, F}^p(\RR)$ if  $m\in \NN$ and $m$ is sufficiently large. Thus $H_{\lambda, F}^p(\RR)$ is dense in
$H_{\lambda}^p(\RR)$.
\end{proof}

\begin{proposition}\label{uu2}
For $1<p<\infty$, $f\in L_{\lambda}^p(\RR)$, we have the following equations:
\begin{eqnarray}\label{uu4}
P(\SH_\lambda f)(x, y)=Qf(x, y),
\end{eqnarray}
\begin{eqnarray}\label{uu5}
Q(\SH_\lambda f)(x, y)=-Pf(x, y).
\end{eqnarray}
And $P(\SH_\lambda f)(x, y)+iQ(\SH_\lambda f)(x, y)$ is a $\lambda$-analytic function.
\end{proposition}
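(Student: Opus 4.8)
The plan is to reduce everything to the identity $(Qf)(\cdot,y)=[P(\SH_\lambda f)](\cdot,y)$ already recorded in Proposition~\ref{u1}, together with the involution property $\SH_\lambda^2=-\mathrm{Id}$ of the $\lambda$-Hilbert transform. Equation~(\ref{uu4}) requires no new work: it is literally the identity $(Qf)(x,y)=[P(\SH_\lambda f)](x,y)$ stated in Proposition~\ref{u1}. For~(\ref{uu5}), observe first that $\SH_\lambda f\in L^p_\lambda(\RR)$, because $\SH_\lambda$ is strongly $(p,p)$ bounded for $1<p<\infty$ by Proposition~\ref{uu}. Hence I may apply~(\ref{uu4}) with $f$ replaced by $\SH_\lambda f$, obtaining
$$P(\SH_\lambda(\SH_\lambda f))(x,y)=Q(\SH_\lambda f)(x,y).$$
If I can show $\SH_\lambda(\SH_\lambda f)=-f$, then by linearity of the $\lambda$-Poisson integral the left side equals $P(-f)(x,y)=-Pf(x,y)$, which is exactly~(\ref{uu5}).

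The core step is therefore the involution identity $\SH_\lambda^2=-\mathrm{Id}$. For $1<p\le2$ I would argue on the Dunkl-transform side: by the last formula of Proposition~\ref{u1}, $[\SF_\lambda(\SH_\lambda g)](\xi)=-i(\mathrm{sgn}\,\xi)(\SF_\lambda g)(\xi)$, so applying this twice to $g=f$ gives
$$[\SF_\lambda(\SH_\lambda^2 f)](\xi)=\big(-i\,\mathrm{sgn}\,\xi\big)^2(\SF_\lambda f)(\xi)=-(\SF_\lambda f)(\xi)\qquad\text{a.e. }\xi,$$
since $(\mathrm{sgn}\,\xi)^2=1$ for $\xi\neq0$; the injectivity of $\SF_\lambda$ on $L^2_\lambda(\RR)$ (Plancherel, Proposition~\ref{D-transform-a}(v)) then yields $\SH_\lambda^2 f=-f$. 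For the remaining range $2<p<\infty$ I would use a density/continuity argument: $L^p_\lambda(\RR)\cap L^2_\lambda(\RR)$ is dense in $L^p_\lambda(\RR)$, the identity $\SH_\lambda^2 g=-g$ holds for every $g$ in this intersection, and both $\SH_\lambda^2$ and $-\mathrm{Id}$ are bounded on $L^p_\lambda(\RR)$; passing to the limit gives $\SH_\lambda^2=-\mathrm{Id}$ on all of $L^p_\lambda(\RR)$.

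Finally, the $\lambda$-analyticity of $P(\SH_\lambda f)+iQ(\SH_\lambda f)$ is immediate from Proposition~\ref{Poisson-conjugate-CR}(i): since $\SH_\lambda f\in L^p_\lambda(\RR)$, the pair $u=P(\SH_\lambda f)$, $v=Q(\SH_\lambda f)$ satisfies the $\lambda$-Cauchy--Riemann equations~(\ref{a c r0}) and both components are $\lambda$-harmonic, which is precisely the definition of a $\lambda$-analytic function.

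The main obstacle is the extension of the involution identity from $1<p\le2$ to $2<p<\infty$, where the Dunkl-transform identity of Proposition~\ref{u1} is unavailable (the Dunkl transform of an $L^p_\lambda$ function need not exist for $p>2$). The delicate point is to ensure that $\SH_\lambda$ is a single, $p$-independent operator on the overlap $L^p_\lambda(\RR)\cap L^2_\lambda(\RR)$, so that the $L^2_\lambda$ identity can legitimately be transported to $L^p_\lambda$; this is guaranteed by the almost-everywhere pointwise description $\SH_\lambda g=\lim_{y\to0+}(Qg)(\cdot,y)$ in Proposition~\ref{uu}, which makes no reference to $p$ and so gives the same function for all $g$ in the intersection.
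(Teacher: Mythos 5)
Your proof is correct, but it takes a genuinely different route from the paper's. The paper also gets (\ref{uu4}) for free from Proposition~\ref{u1}, but for (\ref{uu5}) it argues via uniqueness of $\lambda$-analytic functions: it sets $\widetilde{F}=P(\SH_\lambda f)+iQ(\SH_\lambda f)$ and $F=Qf-iPf$, notes both are $\lambda$-analytic with $\mathrm{Re}(\widetilde{F}-F)=0$ by (\ref{uu4}), concludes $\widetilde{F}-F=iC$ (a Liouville-type step for the $\lambda$-Cauchy--Riemann system), and kills the constant using that all the functions involved lie in $L^p_\lambda(\RR)$. You instead prove the involution identity $\SH_\lambda^2=-\mathrm{Id}$ and feed $\SH_\lambda f$ back into (\ref{uu4}). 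Your route avoids the (unproved, though true) assertion that a $\lambda$-analytic function with vanishing real part is an imaginary constant, and it yields the involution identity as a reusable byproduct; the paper's route avoids any appeal to the Dunkl-transform symbol and to density. One small imprecision in yours: for $1<p<2$ you invoke ``injectivity of $\SF_\lambda$ on $L^2_\lambda(\RR)$,'' but $\SF_\lambda f$ for $f\in L^p_\lambda(\RR)$, $1<p<2$, is defined via Hausdorff--Young and lives in $L^{p'}_\lambda(\RR)$, not $L^2_\lambda(\RR)$, so Plancherel does not literally apply; the cleanest fix is to run the multiplier computation only for $p=2$ and then use your own density/continuity argument (which works verbatim) for all $p\neq 2$, relying as you note on the $p$-independent pointwise definition $\SH_\lambda g=\lim_{y\to0+}(Qg)(\cdot,y)$ from Proposition~\ref{uu} to identify the operators on the overlap. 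The treatment of the $\lambda$-analyticity claim via Proposition~\ref{Poisson-conjugate-CR}(i) coincides with the paper's.
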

\begin{proof}
Formula\,(\ref{uu4}) is deduced from Proposition\,\ref{u1} directly. Then we will prove  Formula\,(\ref{uu5}). By Proposition\,\ref{uu6}, we could deduce  that   $\lambda$-Hilbert transform is a bounded map from $L_{\lambda}^p(\RR)$ to $L_{\lambda}^p(\RR)$ for $1<p<\infty$. Thus $(\SH_\lambda f)(x)\in L_{\lambda}^p(\RR)$ if $f\in L_{\lambda}^p(\RR)$. By Proposition\,\ref{Poisson-conjugate-CR}{\rm (i)}, $P(\SH_\lambda f)(x, y)+iQ(\SH_\lambda f)(x, y)$ is a $\lambda$-analytic function. We could also notice that $Qf(x, y)-iPf(x, y)$ is a $\lambda$-analytic function. We use $\widetilde{F}(z)$ to denote as $\widetilde{F}(z)= P(\SH_\lambda f)(x, y)+iQ(\SH_\lambda f)(x, y)$  and $F(z)$ to denote as $F(z)=Qf(x, y)-iPf(x, y)$.
Thus by Formula\,(\ref{uu4}), we could obtain
$$Re\left(\widetilde{F}(z)-F(z)\right)=0.$$
Thus  $\widetilde{F}(z)-F(z)$ is a $\lambda$-analytic function with the real parts $0$. Then we could deduce that $$\widetilde{F}(z)-F(z)=iC,$$
for some constant $C$. Notice that the operators $P$, $Q$ and $\SH_\lambda$ are  bounded maps from $L_{\lambda}^p(\RR)$ to $L_{\lambda}^p(\RR)$ for $1<p<\infty$, thus we could deduce that the function $x\rightarrow\widetilde{F}(z)-F(z)\in L_{\lambda}^p(\RR)$. Then $iC\in L_{\lambda}^p(\RR) $ only when
$C=0$. This proves the Proposition.
\end{proof}

\begin{proposition}\label{uu2*}
 $F(x, y)=u(x, y)+iv(x, y))\in H_{\lambda}^p(\RR_+^2)$ with $\frac{2\lambda}{2\lambda+1}<p<\infty$, we have the following:
$$\|v_{\nabla}^*\|_{L_{\lambda}^p(\RR)}\sim\|u_{\nabla}^*\|_{L_{\lambda}^p(\RR)}\sim\|F\|_{H_{\lambda}^p(\RR_+^2)}.$$
\end{proposition}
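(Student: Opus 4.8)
The plan is to prove the two non-trivial directions separately and then reduce the statement for $v$ to that for $u$ by multiplication by $i$. The easy half is immediate: since $|u(s,y)|\le|F(s,y)|$ and $|v(s,y)|\le|F(s,y)|$ pointwise, taking the supremum over the cone $|x-s|<y$ gives $u_{\nabla}^*(x)\le F_{\nabla}^*(x)$ and $v_{\nabla}^*(x)\le F_{\nabla}^*(x)$ for every $x$. Combined with the maximal-function characterization $\|F\|_{H_{\lambda}^p(\RR_+^2)}\approx\|F_{\nabla}^*\|_{L_{\lambda}^p}$ of Theorem \ref{ss}{\rm(iv)} (valid for $p>p_0=\tfrac{2\lambda}{2\lambda+1}$), this yields $\|u_{\nabla}^*\|_{L_{\lambda}^p}\lesssim\|F\|_{H_{\lambda}^p(\RR_+^2)}$ and $\|v_{\nabla}^*\|_{L_{\lambda}^p}\lesssim\|F\|_{H_{\lambda}^p(\RR_+^2)}$.

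For the reverse inequality $\|F\|_{H_{\lambda}^p(\RR_+^2)}\lesssim\|u_{\nabla}^*\|_{L_{\lambda}^p}$, I would proceed as follows. The real part $u$ is $\lambda$-harmonic and satisfies $u_{\nabla}^*\le F_{\nabla}^*\in L_{\lambda}^p$, so Theorem \ref{u2} applies and produces a $\lambda$-analytic function $\tilde F=u+i\tilde v\in H_{\lambda}^p(\RR_+^2)$ with the \emph{same} real part and with $\|\tilde F\|_{H_{\lambda}^p(\RR_+^2)}\sim\|u_{\nabla}^*\|_{L_{\lambda}^p}$. The crux is then to identify $F$ with $\tilde F$. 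Their difference $G:=F-\tilde F=i(v-\tilde v)$ is $\lambda$-analytic with $\operatorname{Re}G\equiv0$; writing $G=iw$ with $w=v-\tilde v$ real, the $\lambda$-Cauchy--Riemann equations \eqref{a c r0} collapse to $\partial_y w=0$ and $D_x w=0$. Hence $w=w(x)$ is independent of $y$ and lies in the kernel of $D_x$. Splitting $w=w_e+w_o$ into even and odd parts, the equation $w'(x)+\tfrac{\lambda}{x}[w(x)-w(-x)]=0$ separates into $w_e'=0$ and $w_o'+\tfrac{2\lambda}{x}w_o=0$, whose solutions are $w_e=\mathrm{const}$ and $w_o=c\,\operatorname{sgn}(x)|x|^{-2\lambda}$. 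The singular solution is excluded by continuity at $x=0$ (a $\lambda$-analytic function is $C^2$ on all of $\RR_+^2$, in particular along $\{x=0,\,y>0\}$), so $w$ is constant; but a nonzero constant has infinite $H_{\lambda}^p(\RR_+^2)$-norm while $G\in H_{\lambda}^p(\RR_+^2)$, forcing $w\equiv0$. Thus $F=\tilde F$ and $\|F\|_{H_{\lambda}^p(\RR_+^2)}=\|\tilde F\|_{H_{\lambda}^p(\RR_+^2)}\sim\|u_{\nabla}^*\|_{L_{\lambda}^p}$, which together with the easy half gives $\|u_{\nabla}^*\|_{L_{\lambda}^p}\sim\|F\|_{H_{\lambda}^p(\RR_+^2)}$.

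To obtain the equivalence for $v$ I would rotate by $i$: the function $iF=-v+iu$ is again $\lambda$-analytic (one checks directly that both equations of \eqref{a c r0} are preserved under $F\mapsto iF$), it satisfies $|iF|=|F|$ so that $\|iF\|_{H_{\lambda}^p(\RR_+^2)}=\|F\|_{H_{\lambda}^p(\RR_+^2)}$, and its real part is $-v$ with $(-v)_{\nabla}^*=v_{\nabla}^*$. Applying the equivalence just established for real parts to $iF$ gives $\|v_{\nabla}^*\|_{L_{\lambda}^p}\sim\|iF\|_{H_{\lambda}^p(\RR_+^2)}=\|F\|_{H_{\lambda}^p(\RR_+^2)}$. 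Chaining the two equivalences yields $\|v_{\nabla}^*\|_{L_{\lambda}^p}\sim\|u_{\nabla}^*\|_{L_{\lambda}^p}\sim\|F\|_{H_{\lambda}^p(\RR_+^2)}$, as claimed.

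The main obstacle I anticipate is the uniqueness step $F=\tilde F$: the whole argument rests on showing that a $\lambda$-analytic function with vanishing real part and finite $H_{\lambda}^p(\RR_+^2)$-norm must vanish identically. The delicate point is the analysis of the $y$-independent kernel of $D_x$, where one must rule out the homogeneous singular solution $\operatorname{sgn}(x)|x|^{-2\lambda}$ (via $C^2$-regularity at $x=0$) and the constant solution (via finiteness of the norm); once this is settled, the reduction for $v$ via $F\mapsto iF$ is routine.
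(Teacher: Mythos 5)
Your argument is correct and follows the same basic route as the paper's proof: both hinge on the rotation $F\mapsto \pm iF$, which exchanges the roles of $u$ and $v$ while preserving the $H_{\lambda}^p(\RR_+^2)$-norm, combined with Theorem~\ref{u2}. The difference is one of completeness rather than strategy. The paper's proof is two lines: it observes that $\widetilde F=v-iu=-iF\in H_{\lambda}^p(\RR_+^2)$ with $\|\widetilde F\|_{H_{\lambda}^p(\RR_+^2)}=\|F\|_{H_{\lambda}^p(\RR_+^2)}$ and then cites Theorem~\ref{u2} to read off $\|u_{\nabla}^*\|_{L_{\lambda}^p}\sim\|F\|_{H_{\lambda}^p(\RR_+^2)}$ and $\|v_{\nabla}^*\|_{L_{\lambda}^p}\sim\|\widetilde F\|_{H_{\lambda}^p(\RR_+^2)}$. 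As stated, however, Theorem~\ref{u2} is an existence result: it produces \emph{some} $\lambda$-analytic function with real part $u$ satisfying the norm equivalence, and applying it to the \emph{given} $F$ tacitly requires knowing that the two coincide. You supply exactly this missing step: the difference is $\lambda$-analytic with vanishing real part, hence of the form $iw$ with $\partial_y w=0$ and $D_xw=0$; the kernel of $D_x$ consists of constants together with the singular odd solution $c\,\operatorname{sgn}(x)|x|^{-2\lambda}$, and both are excluded (the latter by $C^2$-regularity of $\lambda$-analytic functions across the ray $\{x=0,\ y>0\}$, the former by finiteness of the $H_{\lambda}^p(\RR_+^2)$-norm of the difference, which holds for all $p>p_0$ by the $p$-triangle inequality). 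You also record the easy bounds $u_{\nabla}^*,v_{\nabla}^*\le F_{\nabla}^*$ via Theorem~\ref{ss}(iv), which simultaneously verifies the hypothesis $u_{\nabla}^*\in L_{\lambda}^p(\RR)$ needed to invoke Theorem~\ref{u2} and gives the reverse inequalities. In short, your proposal is the paper's argument with the implicit uniqueness assertion made explicit and proved; nothing in it fails.
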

\begin{proof}
Notice that $\widetilde{F}(z)= v(x, y)-iu(x, y)\in H_{\lambda}^p(\RR_+^2)$ and
$$\|F\|_{H_{\lambda}^p(\RR_+^2)}=\|\widetilde{F}\|_{H_{\lambda}^p(\RR_+^2)}.$$
Thus by Theorem\,\ref{u2} we could deduce that
$$\|v_{\nabla}^*\|_{L_{\lambda}^p(\RR)}\sim\|\widetilde{F}\|_{H_{\lambda}^p(\RR_+^2)}=\|F\|_{H_{\lambda}^p(\RR_+^2)}\sim\|u_{\nabla}^*\|_{L_{\lambda}^p(\RR)}.$$
This proves the Proposition.
\end{proof}

\begin{theorem}\label{t1}
For $\frac{2\lambda}{2\lambda+1}<p<\infty$, $\lambda$-Hilbert transform is a bounded map from $H_{\lambda}^p(\RR)$ to $L_{\lambda}^p(\RR)$ for any function $f \in H_{\lambda}^p(\RR)\bigcap L_{\lambda}^{1}(\RR)\bigcap L_{\lambda}^{2}(\RR)$:
$$\|\SH f\|_{L_{\lambda}^p(\RR)}\lesssim\|f\|_{H_{\lambda}^p(\RR)}.$$
 Thus $\lambda$-Hilbert transform can be extended as a bounded operator from $H_{\lambda}^p(\RR)$ to $L_{\lambda}^p(\RR)$.
\end{theorem}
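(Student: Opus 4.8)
The plan is to split the argument along the two ranges already distinguished at the opening of this section. For $1<p<\infty$ the identity $H_\lambda^p(\RR)=L_\lambda^p(\RR)$ from Proposition \ref{s5}, combined with the strong $(p,p)$ bound of Proposition \ref{u1} (equivalently Proposition \ref{uu}), gives $\|\SH_\lambda f\|_{L_\lambda^p}\le A_p\|f\|_{L_\lambda^p}=A_p\|f\|_{H_\lambda^p(\RR)}$ at once, so nothing further is needed there. The substance of the theorem is the range $\frac{2\lambda}{2\lambda+1}<p\le1$, which I would treat through the $\lambda$-analytic extension supplied by Definition \ref{u3}.

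First I would fix $f\in H_\lambda^p(\RR)\cap L_\lambda^1(\RR)\cap L_\lambda^2(\RR)$ and let $F=u+iv$ be the $\lambda$-analytic function attached to $f$ as in Definition \ref{u3}. By Proposition \ref{s3} the $\lambda$-harmonic function $u$ satisfies $u_\nabla^*\in L_\lambda^p(\RR)$ with $\|u_\nabla^*\|_{L_\lambda^p}\sim\|f\|_{H_\lambda^p(\RR)}$, and by Theorem \ref{u2} we may take $F\in H_\lambda^p(\RR_+^2)$ with $u=\mathrm{Re}\,F$ and $\|F\|_{H_\lambda^p(\RR_+^2)}\sim\|u_\nabla^*\|_{L_\lambda^p}$. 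Since $F\in H_\lambda^p(\RR_+^2)$, Proposition \ref{uu2*} then yields the crucial two-sided comparison $\|v_\nabla^*\|_{L_\lambda^p}\sim\|u_\nabla^*\|_{L_\lambda^p}\sim\|F\|_{H_\lambda^p(\RR_+^2)}$.

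The remaining ingredient is a pointwise domination. By formula (\ref{uu1}) of Definition \ref{u3}, $\SH_\lambda f(x)$ is the nontangential limit of $v(t,y)$ as $(t,y)\to(x,0+)$, which exists almost everywhere by Theorem \ref{ss}(i); hence $|\SH_\lambda f(x)|\le\sup_{|x-s|<y}|v(s,y)|=v_\nabla^*(x)$ for a.e.\ $x$. Integrating this bound and chaining the equivalences of the previous step gives
$$\|\SH_\lambda f\|_{L_\lambda^p}\le\|v_\nabla^*\|_{L_\lambda^p}\sim\|u_\nabla^*\|_{L_\lambda^p}\sim\|f\|_{H_\lambda^p(\RR)},$$
which is the asserted estimate. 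Finally, since $H_\lambda^p(\RR)\cap L_\lambda^1(\RR)\cap L_\lambda^2(\RR)$ is dense in $H_\lambda^p(\RR)$ (Proposition \ref{u6}, using Proposition \ref{s5}), the operator, being bounded on this dense subspace, extends uniquely to a bounded operator $\SH_\lambda\colon H_\lambda^p(\RR)\to L_\lambda^p(\RR)$.

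I expect the main obstacle to be bookkeeping rather than a single hard estimate: one must check that the $F$ produced by Definition \ref{u3} genuinely lies in $H_\lambda^p(\RR_+^2)$ so that Proposition \ref{uu2*} applies, and that the nontangential limit defining $\SH_\lambda f$ coincides a.e.\ with the boundary value of $\mathrm{Im}\,F$, so that the domination by $v_\nabla^*$ is legitimate. Both facts are consequences of the cited results, but aligning the three norm equivalences and the almost-everywhere limit so that they all refer to one and the same $\lambda$-analytic function is the delicate point of the proof.
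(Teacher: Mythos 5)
Your proposal is correct and follows essentially the same route as the paper: produce the $\lambda$-analytic $F=u+iv$ via Proposition \ref{s3} and Theorem \ref{u2}, dominate $\SH_\lambda f$ by $v_{\nabla}^*$ using Definition \ref{u3}, invoke Proposition \ref{uu2*} to pass from $v_{\nabla}^*$ to $\|f\|_{H_\lambda^p(\RR)}$, and conclude by density. The only cosmetic difference is that you derive the bound $\|\SH_\lambda f\|_{L_\lambda^p}\le\|v_\nabla^*\|_{L_\lambda^p}$ from the a.e.\ nontangential limit (\ref{uu1}) rather than from the norm convergence (\ref{uu7}) as the paper does; both are immediate from Definition \ref{u3}.
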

\begin{proof}
For $\frac{2\lambda}{2\lambda+1}< p\leq1$, for  any $f\in H_{\lambda}^p(\RR)\bigcap L_{\lambda}^{1}(\RR)\bigcap L_{\lambda}^{2}(\RR)$, there is  a $\lambda$-analytic function $F=u+iv$, such that $f$ and $u$ satisfy  Proposition\,\ref{s3}, $u$ and $F$ satisfy Theorem\,\ref{u2}.
From Formula\,(\ref{uu1}) and Formula\,(\ref{uu7}), we could obtain the following inequality
\begin{eqnarray}\label{uu3}
\|\SH_\lambda f\|_{L_{\lambda}^p(\RR)}=\lim_{y\rightarrow0+}\|v(,y)\|_{L_{\lambda}^p(\RR)}\leq\|v_{\nabla}^*\|_{L_{\lambda}^p(\RR)}.
\end{eqnarray}
Thus by Proposition\,\ref{uu2*} and  Formula\,(\ref{uu3}), we could deduce that
$$ \|\SH_\lambda f\|_{L_{\lambda}^p(\RR)}\lesssim\|v_{\nabla}^*\|_{L_{\lambda}^p(\RR)}\lesssim\|F\|_{H_{\lambda}^p(\RR_+^2)}\sim\|f\|_{H_{\lambda}^p(\RR)}.$$
Thus we could have
$$ \|\SH_\lambda f\|_{L_{\lambda}^p(\RR)}\lesssim\|f\|_{H_{\lambda}^p(\RR)}.$$
 Notice that $ H_{\lambda}^p(\RR)\bigcap L_{\lambda}^{1}(\RR)\bigcap L_{\lambda}^{2}(\RR)$ is dense in  $ H_{\lambda}^p(\RR)$, thus $\lambda$-Hilbert transform can be extended as a bounded operator from $H_{\lambda}^p(\RR)$ to $L_{\lambda}^p(\RR)$.
\end{proof}

\begin{proposition}\label{estimate a}
 $$\int_{-1}^{1}  \left(1-bs\right)^{-\lambda-1}(1+s)(1-s^2)^{\lambda-1}ds \leq C \frac{1}{1-|b|} ,\ \  \forall -1< b<1 \ ,\ \ \lambda>0$$
 C is depend on $\lambda$, and independent on $b$. ($C\thicksim 1/\lambda$)
\end{proposition}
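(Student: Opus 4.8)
The plan is to first simplify the weight by writing $(1-s^2)^{\lambda-1}=(1-s)^{\lambda-1}(1+s)^{\lambda-1}$, so that the integral to be bounded becomes
\[
I(b)=\int_{-1}^{1}(1-bs)^{-\lambda-1}(1+s)^{\lambda}(1-s)^{\lambda-1}\,ds .
\]
The whole point is that the naive pointwise bound $(1-bs)^{-\lambda-1}\le(1-|b|)^{-\lambda-1}$ is far too lossy, since it would produce the power $(1-|b|)^{-\lambda-1}$ rather than the claimed $(1-|b|)^{-1}$; the argument must instead exploit the interaction between the singular factor and the vanishing Jacobi weights. I would split into the easy range $|b|\le 1/2$ and the singular range $1/2<|b|<1$. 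In the easy range one has $1-bs\ge 1-|b|\ge 1/2$ uniformly in $s$, so $(1-bs)^{-\lambda-1}\le 2^{\lambda+1}$ and $I(b)$ is dominated by the finite Beta integral $\int_{-1}^1(1+s)^\lambda(1-s)^{\lambda-1}\,ds=2^{2\lambda}B(\lambda+1,\lambda)$; since $1/(1-|b|)\ge 1$ always, this uniform constant bound is already of the desired form.

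For the singular range I would treat $1/2<b<1$ first, as this is where the genuine $(1-b)^{-1}$ blow-up occurs. The key algebraic identity is the affine decomposition
\[
1-bs=(1-b)+b(1-s),
\]
which after the substitution $w=1-s\in[0,2]$ turns the integral into $\int_0^2\bigl((1-b)+bw\bigr)^{-\lambda-1}(2-w)^{\lambda}w^{\lambda-1}\,dw$. Dominating the harmless factor by $(2-w)^{\lambda}\le 2^{\lambda}$ and enlarging the domain to $[0,\infty)$ (the integrand being nonnegative), I would rescale $w=\tfrac{1-b}{b}\tau$ to pull out the scale: this produces the scale-invariant integral $\int_0^\infty(1+\tau)^{-\lambda-1}\tau^{\lambda-1}\,d\tau=B(\lambda,1)=1/\lambda$ together with the explicit prefactor $(1-b)^{-1}b^{-\lambda}$. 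Since $b^{-\lambda}\le 2^{\lambda}$ on $[1/2,1)$, this yields $I(b)\lesssim_\lambda (1-b)^{-1}$, with the constant carrying the Beta value $1/\lambda$, matching the parenthetical claim $C\sim 1/\lambda$.

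The remaining range $-1<b<-1/2$ I would reduce to the previous computation by the reflection $s\mapsto-s$, under which $I(b)=\int_{-1}^{1}(1-|b|s)^{-\lambda-1}(1-s)^{\lambda}(1+s)^{\lambda-1}\,ds$. The difference is that now the singular factor near $s=1$ meets the vanishing weight $(1-s)^{\lambda}$ rather than $(1-s)^{\lambda-1}$; running the same $w=1-s$ substitution and rescaling gives the truncated integral $\int_0^{c/a}(1+\tau)^{-\lambda-1}\tau^{\lambda}\,d\tau$ (with $a=1-|b|$, $c=|b|$), whose integrand decays like $\tau^{-1}$, so it grows only like $\log(c/a)\lesssim\log\frac{1}{1-|b|}$. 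Using the elementary inequality $\log\frac1x\le\frac1x$ on $(0,1]$, this is again $\lesssim(1-|b|)^{-1}$, so this case is in fact strictly milder. I expect the main obstacle to be precisely the extraction of the sharp power in the case $1/2<b<1$: one must resist the crude pointwise estimate and instead use the decomposition $1-bs=(1-b)+b(1-s)$ together with the scaling $w\mapsto\tfrac{1-b}{b}\tau$, which converts all of the $b$-dependence into a single explicit factor $(1-b)^{-1}$ multiplying a convergent, $b$-independent Beta integral.
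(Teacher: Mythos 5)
Your argument is correct, but it runs along a genuinely different line from the paper's. The paper also splits by the sign of $b$ and discards the non-singular half-interval, but on the singular half it integrates by parts once (integrating the Jacobi weight $(1-s)^{\lambda-1}$, resp.\ $(1+s)^{\lambda}$, and differentiating the kernel), which produces the boundary term $1/\lambda$ (resp.\ $1/(\lambda+1)$) plus a remainder that is tamed by the pointwise inequality $1-s\le 1-bs$ on $[0,1]$ (resp.\ $1+s\le 1-bs$ on $[-1,0]$), reducing everything to the elementary integrals $\int_0^1(1-bs)^{-2}ds=\frac{1}{1-b}$ and $\int_{-1}^0(1-bs)^{-1}ds$. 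You instead use the affine identity $1-bs=(1-b)+b(1-s)$ and the scaling $w=\frac{1-b}{b}\tau$ to convert the singular half into an exact Beta integral, $B(\lambda,1)=1/\lambda$ for $b>0$ and a truncated Beta integral growing like $\log\frac{1}{1-|b|}$ for $b<0$. Both proofs detect the same structural facts --- the genuine $(1-b)^{-1}$ blow-up occurs only for $b>0$, while for $b<0$ the growth is merely logarithmic, and the constant $C\sim 1/\lambda$ comes from the weight $(1-s)^{\lambda-1}$ near $s=1$ --- but your scaling argument makes the value $1/\lambda$ transparent as $B(\lambda,1)$, whereas the paper reads it off the boundary term of the integration by parts. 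One small point to patch in your reflected case $-1<b<-1/2$: after the substitution the spectator factor is $(1+s)^{\lambda-1}=(2-w)^{\lambda-1}$, which for $\lambda<1$ is unbounded near $w=2$ (unlike the factor $(2-w)^{\lambda}$ you dominate by $2^{\lambda}$ in the positive case); you should note that on $w\in[1,2]$ the kernel factor is already bounded by $2^{\lambda+1}$ since $(1-|b|)+|b|w\ge|b|\ge 1/2$ there, so that region contributes only the absolute constant $\int_1^2(2-w)^{\lambda-1}w^{\lambda}\,dw<\infty$ and the scaling argument need only be applied on $w\in[0,1]$.
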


\begin{proof}
CASE 1: when $0\leq b<1$.\\
It is obvious to see that when $0\leq b<1 $,
$\int_{-1}^{0}  \left(1-bs\right)^{-\lambda-1}(1+s)(1-s^2)^{\lambda-1}ds \lesssim 1 .$ Then, by the Formula of integration by parts and  $1-s\leq1-bs$ when $1\geq s\geq0$, we obtain:
\begin{eqnarray*}
\left|\int_{0}^{1} \left(1-bs\right)^{-\lambda-1}(1+s)(1-s^2)^{\lambda-1}ds\right|
 &\lesssim&
 \left|\int_{0}^{1}   \left(1-bs\right)^{-\lambda-1}(1-s)^{\lambda-1}ds\right|
\\&\lesssim&\frac{1}{\lambda}+\frac{\lambda+1}{\lambda}b\int_{0}^{1}\left(1-bs\right)^{-2}ds
\\&\lesssim&
\frac{1}{1-b}.
\end{eqnarray*}

CASE 2: when $-1<b\leq0$.\\
Obviously we could have$\int_{0}^{1}  \left(1-bs\right)^{-\lambda-1}(1+s)(1-s^2)^{\lambda-1}ds \lesssim 1 .$  Then,
by the Formula of integration by parts and  $1+s\leq1-bs$ when $-1\leq s\leq0$, we obtain:
\begin{eqnarray*}
 \left|\int_{-1}^{0} \left(1-bs\right)^{-\lambda-1}(1+s)(1-s^2)^{\lambda-1}ds\right|
 &\lesssim&
 \left|\int_{-1}^{0}   \left(1-bs\right)^{-\lambda-1}(1+s)^{\lambda}ds\right|
\\&\lesssim&\frac{1}{\lambda+1}-b\int_{-1}^{0}\left(1-bs\right)^{-1}ds
\\&\lesssim&
-\ln(1+b)
\\&\lesssim&
\frac{1}{1+b}.
\end{eqnarray*}
By CASE 1 and CASE 2, we could obtain
$$\int_{-1}^{1}  \left(1-bs\right)^{-\lambda-1}(1+s)(1-s^2)^{\lambda-1}ds \leq C \frac{1}{1-|b|} ,\ \  \forall -1< b<1 \ ,\ \ \lambda>0.$$
 This proves the proposition.
\end{proof}

\begin{lemma}\cite{ZhongKai Li 3}
  Let $c>1$ be a fixed number. Then for $x,t,t', s\in\RR$ and $|s|\leq1$, if $||x|-|
t||>c\delta, |t-t'|<\delta$ with $\delta>0$, we have
\begin{equation}\label{Basic-Es0}|x|^2+|t|^2-2xts\thickapprox |x|^2+|t'|^2-2xt's\end{equation}
\end{lemma}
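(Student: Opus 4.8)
The plan is to show that the two quantities, which I denote
$$A := x^2 + t^2 - 2xts, \qquad B := x^2 + t'^2 - 2xt's,$$
are comparable with constants depending only on $c$. I would first note that the naive route --- estimating the difference $A - B = (t-t')\bigl(t + t' - 2xs\bigr)$ directly and comparing it to $A$ --- does not work: although $|s|\le 1$ gives the lower bound $A \ge (|x|-|t|)^2$, the quantity $A$ can be as small as $(|x|-|t|)^2$ while $|t + t' - 2xs|$ is of size $|x|+|t|$, so $|A-B|$ need not be small relative to $A$. The resolution is to pass to square roots and use the hypothesis $|s|\le 1$ geometrically.

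First I would complete the square to write
$$A = (x - ts)^2 + t^2(1 - s^2), \qquad B = (x - t's)^2 + t'^2(1 - s^2).$$
Because $|s|\le 1$, the factor $1-s^2$ is nonnegative, so $A$ and $B$ are sums of two squares; hence $\sqrt{A}$ and $\sqrt{B}$ are exactly the Euclidean lengths of the plane vectors $\vb = (x - ts,\ t\sqrt{1-s^2})$ and $\vb' = (x - t's,\ t'\sqrt{1-s^2})$. The key computation is then the triangle inequality for these lengths, using $\vb - \vb' = (t-t')(-s,\ \sqrt{1-s^2})$:
$$\bigl|\sqrt{A} - \sqrt{B}\bigr| \le |\vb - \vb'| = |t - t'|\,\sqrt{s^2 + (1 - s^2)} = |t - t'| < \delta,$$
where the last equality again uses $|s|\le 1$ and the final bound is the hypothesis $|t-t'|<\delta$. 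Separately, from $A \ge x^2 + t^2 - 2|x|\,|t| = (|x|-|t|)^2$ and the separation hypothesis I would record the lower bound $\sqrt{A} \ge \bigl||x|-|t|\bigr| > c\delta$.

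To finish, I would combine the two displayed facts: since $\bigl|\sqrt{A}-\sqrt{B}\bigr| < \delta < \tfrac{1}{c}\sqrt{A}$, it follows that $\bigl(1-\tfrac1c\bigr)\sqrt{A} < \sqrt{B} < \bigl(1+\tfrac1c\bigr)\sqrt{A}$, and squaring --- which is legitimate because $c>1$ forces $1-\tfrac1c>0$ --- yields $\bigl(1-\tfrac1c\bigr)^2 A < B < \bigl(1+\tfrac1c\bigr)^2 A$, i.e. $A \thickapprox B$ with comparability constants depending only on $c$. The only genuine obstacle is spotting the completing-the-square / vector-norm reformulation; once $A$ and $B$ are recognized as squared lengths, the constraint $|s|\le 1$ makes the difference vector $\vb - \vb'$ have length exactly $|t-t'|$, after which the result is just the triangle inequality together with the separation hypothesis $\bigl||x|-|t|\bigr| > c\delta$.
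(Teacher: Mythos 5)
Your proof is correct and complete. Note that the paper itself does not prove this lemma --- it is quoted verbatim from the reference [Li--Liao] --- so there is no in-paper argument to compare against; your write-up stands on its own. The key steps all check out: the completion of the square $x^2+t^2-2xts=(x-ts)^2+t^2(1-s^2)$ is valid precisely because $|s|\le 1$ makes $1-s^2\ge 0$, the difference vector $\vb-\vb'=(t-t')(-s,\sqrt{1-s^2})$ has norm exactly $|t-t'|$, and the lower bound $\sqrt{A}\ge\bigl||x|-|t|\bigr|>c\delta$ together with $c>1$ gives two-sided comparability with constants $\bigl(1\mp\tfrac1c\bigr)^2$ depending only on $c$, as the statement requires. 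Your opening remark is also well taken: the naive estimate $|A-B|=|t-t'|\,|t+t'-2xs|$ genuinely fails when $A\sim(|x|-|t|)^2$ is much smaller than $(|x|+|t|)^2$, so passing to square roots (or an equivalent device) is necessary, not merely convenient.
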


\begin{proposition}\label{us8}
$a(x)$ is a $p_\lambda$-atom, then we  could deduce that $a(x)\in H_{\lambda}^p(\RR)$, moreover the function $P(a+i\SH_\lambda a)(x, y)$ is a $\lambda$-analytic function and $P(a+i\SH_\lambda a)(x, y)\in H_{\lambda}^p(\RR_+^2)$ for the range of $\frac{1}{1+\gamma_\lambda}<p\leq1$, with $\displaystyle{\gamma_\lambda=1/(4\lambda+2)}$, further more, we could have
$$\|a\|^p_{H_{\lambda}^p(\RR)}\lesssim_{\lambda,p} C$$
for some constant $C$.
\end{proposition}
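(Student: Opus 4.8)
The plan is to dispatch membership and the norm bound first, since these are immediate from the atomic theory, and then concentrate on the genuine content, namely that the associated $\lambda$-analytic completion stays in the Hardy space $H^p_\lambda(\RR_+^2)$.

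Since $a$ is bounded with support in a bounded interval $I$, it lies in $L_\lambda^1(\RR)\cap L_\lambda^2(\RR)$. Viewing $a$ as the single-term atomic decomposition $a=1\cdot a$, Proposition \ref{s6} together with the norm formula \eqref{usus1} gives $a\in H^p=H_\lambda^p(\RR)$ (the identification being Proposition \ref{s4}), with $\|a\|_{H_\lambda^p(\RR)}^p\lesssim 1$; this is already the last assertion. Because $a\in L_\lambda^2(\RR)$, Proposition \ref{uu} (strong $(2,2)$ boundedness) gives $\SH_\lambda a\in L_\lambda^2(\RR)$, so $a+i\SH_\lambda a\in L_\lambda^2(\RR)$ and its $\lambda$-Poisson integral is well defined. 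By Proposition \ref{u1} (equivalently Proposition \ref{uu2} with $p=2$) one has $P(\SH_\lambda a)(x,y)=(Qa)(x,y)$, whence
$$P(a+i\SH_\lambda a)(x,y)=(Pa)(x,y)+i(Qa)(x,y).$$
Since $u=Pa$ and $v=Qa$ satisfy the $\lambda$-Cauchy--Riemann equations \eqref{a c r0} by Proposition \ref{Poisson-conjugate-CR}(i), the function $F:=Pa+iQa$ is $\lambda$-analytic, which settles the analyticity claim.

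It remains to prove $F\in H_\lambda^p(\RR_+^2)$, i.e. $\sup_{y>0}c_\lambda\int_\RR|F(x,y)|^p|x|^{2\lambda}\,dx<\infty$. Using $|F|^p=(|Pa|^2+|Qa|^2)^{p/2}\le|Pa|^p+|Qa|^p$ for $0<p\le1$, I would bound the two pieces separately and uniformly in $y$. For the first, $|Pa(x,y)|\le(P^*a)(x)$ gives $\int_\RR|Pa(x,y)|^p|x|^{2\lambda}\,dx\le\|P^*a\|_{L_\lambda^p}^p\sim\|a\|_{H_\lambda^p}^p\lesssim1$ by Proposition \ref{s4}. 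For the conjugate piece, let $t_0$ be the center of $I$ and fix a dilation constant $c>1$. On $cI$ I would use Hölder with exponent $2/p$, the $L^2_\lambda$ bound $\|Qa(\cdot,y)\|_{L_\lambda^2}\lesssim\|a\|_{L_\lambda^2}\lesssim|I|_\lambda^{1/2-1/p}$ (from Proposition \ref{u1} with $p=2$ and $\|a\|_{L_\lambda^\infty}\lesssim|I|_\lambda^{-1/p}$), and the doubling property $|cI|_\lambda\sim|I|_\lambda$, obtaining
$$\int_{cI}|Qa(x,y)|^p|x|^{2\lambda}\,dx\le\|Qa(\cdot,y)\|_{L_\lambda^2}^p\,|cI|_\lambda^{1-p/2}\lesssim1,$$
uniformly in $y$.

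The crux is the off-support estimate on $(cI)^c$. Here I would exploit the moment condition $\int a(t)|t|^{2\lambda}\,dt=0$ to write
$$Qa(x,y)=c_\lambda\int_I a(t)\big[(\tau_x Q_y)(-t)-(\tau_x Q_y)(-t_0)\big]\,|t|^{2\lambda}\,dt,$$
and then estimate the kernel difference through the integral representation of $(\tau_x Q_y)(-t)$ in Proposition \ref{conjugate-Poisson-a}(i). The comparison estimate \eqref{Basic-Es0} lets me treat $y^2+x^2+t^2-2|xt|\cos\theta$ and $y^2+x^2+t_0^2-2|xt_0|\cos\theta$ as equivalent for $x\in(cI)^c$, while Proposition \ref{estimate a} supplies the needed $\theta$-integral bound; together these produce a Hölder-type smoothness estimate of order $\gamma_\lambda=1/(4\lambda+2)$ in the $t$-variable against a weighted negative power of the distance from $t_0$. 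Summing the resulting bound over the dyadic dilates of $I$ and integrating against $|x|^{2\lambda}\,dx$ yields a bound for $\int_{(cI)^c}|Qa(x,y)|^p|x|^{2\lambda}\,dx$ that is finite and uniform in $y$ precisely in the range $\tfrac{1}{1+\gamma_\lambda}<p\le1$, which is our hypothesis. The main obstacle is exactly this off-support step: extracting the sharp smoothness exponent $\gamma_\lambda$ from the reflection-type conjugate $\lambda$-Poisson kernel uniformly in $y$ and in the position of $I$ (in particular when $I$ is close to, or straddles, the origin, where the weight $|x|^{2\lambda}$ degenerates), and matching the decay so obtained with the stated range of $p$.
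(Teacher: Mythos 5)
Your proposal diverges from the paper at the decisive step, and the divergence opens a genuine gap. The paper never estimates $Qa$ directly: it proves $\|P^*a\|^p_{L_\lambda^p}\lesssim C$ by a concrete computation and then obtains $P(a+i\SH_\lambda a)\in H_\lambda^p(\RR_+^2)$ \emph{for free} from Proposition \ref{s3} and Theorem \ref{u2}, which say that any $\lambda$-harmonic $u$ with $u^*_\nabla\in L_\lambda^p$ admits a $\lambda$-analytic completion lying in $H_\lambda^p(\RR_+^2)$ with comparable norm. You instead propose to bound $\sup_y\int|Qa(x,y)|^p|x|^{2\lambda}dx$ by hand, and the off-support part of that bound --- which you yourself label ``the crux'' and ``the main obstacle'' --- is exactly the step you do not carry out. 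It is not a routine adaptation of the Poisson-kernel argument: the conjugate kernel $(\tau_xQ_y)(-t)$ has numerator $(x-t)$ rather than $y$, and its $t$-smoothness near the \emph{reflected} singularity $x\approx-t$ is only of H\"older order $\gamma_\lambda=1/(4\lambda+2)$, not Lipschitz; extracting that exponent is precisely the hard kernel estimate that the paper's route is designed to avoid. Asserting that the pieces ``together produce a H\"older-type smoothness estimate of order $\gamma_\lambda$'' is a statement of the needed conclusion, not a proof of it.

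A second concrete defect in the direct route as you set it up: you excise only the dilate $cI$ of the supporting interval before applying the far-field estimate. Because the Dunkl translation carries mass to the reflected point, any estimate of the form $|{\cdot}|\lesssim|I|_\lambda^{1-1/p}\,\delta\,\bigl||x|-|t_0|\bigr|^{-2}(|x|+|t_0|)^{-2\lambda}$ (cf.\ the paper's Formula (\ref{106})) blows up when $x\approx -t_0$ and degenerates near $x=0$; this is why the paper's near-zone is $I_c\cup\widetilde I_c\cup I_0$ (the interval, its reflection, and a neighbourhood of the origin), with the $L^2$--H\"older argument applied on all three. Your $cI$ alone does not cover the reflected interval. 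Finally, note that your opening shortcut --- deducing $\|a\|^p_{H_\lambda^p}\lesssim1$ from the single-term decomposition $a=1\cdot a$ via Propositions \ref{s4} and \ref{s6} --- essentially declares the first assertion of Proposition \ref{us8} tautological; the paper instead verifies it by the direct maximal-function computation (splitting over $I_c\cup\widetilde I_c\cup I_0$ and its complement, using the moment condition, the mean value theorem with (\ref{Basic-Es0}), and Proposition \ref{estimate a}), which is the self-contained content the proposition is meant to supply. If you keep your architecture, you must actually prove the $\gamma_\lambda$-smoothness estimate for $(\tau_xQ_y)(-t)$ uniformly in $y$ and in the position of $I$; otherwise, switch to the paper's route through Theorem \ref{u2} and the argument closes immediately once the $P^*a$ bound is in hand.
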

\begin{proof}
 Notice that $H_\lambda^p(\RR)=L_\lambda^p(\RR)$ when $1<p<\infty$, thus we need only to prove the case when $\frac{1}{1+\gamma_\lambda}<p\leq1$.
Let $a(x)$ to be   a $p_{\lambda}$-atom, with
 $\|a(x)\|_{L_{\lambda}^{\infty}}\lesssim \frac{1}{|I|_{\lambda}^{1/p}}$,
 ${\rm supp}\,a(x)\subseteq  I$, and
 $\int_{\RR} a(t)|t|^{2\lambda}dt=0 $, where $I$ is denoted as the interval $(t_0-\delta, t_0+\delta)$ on the real line $\RR$. We use
 $\widetilde{I}$ to denote as the interval $(-t_0-\delta, -t_0+\delta)$, $I_c$ to denoted as the  $(t_0-c\delta, t_0+c\delta)$  and
 $\widetilde{I}_c$ to denote as the interval $(-t_0-c\delta, -t_0+c\delta)$, and $I_0=(-c\delta, +c\delta)$, where $c>0$ is a fixed constant.
Notice that $|I_0|_\lambda\sim\int_0^{c\delta}|t|^{2\lambda}dt\lesssim\int_{|t_0|}^{|t_0|+c\delta}|t|^{2\lambda}dt\sim |I_c|_{\lambda}\sim \int_{|t_0|/c}^{|t_0|/c+\delta}|t|^{2\lambda}dt\lesssim \int_{|t_0|}^{|t_0|+\delta}|t|^{2\lambda}dt\sim |I|_{\lambda}$ holds for $c\geq1$, and $|I_0|_\lambda\sim\int_0^{c\delta}|t|^{2\lambda}dt\lesssim\int_{|t_0|}^{|t_0|+c\delta}|t|^{2\lambda}dt\sim |I_c|_{\lambda}\lesssim \int_{|t_0|}^{|t_0|+\delta}|t|^{2\lambda}dt\sim |I|_{\lambda}$ holds for $c\leq1$, thus  we could have
$$|I_0|_{\lambda}\lesssim_\lambda |I_c|_{\lambda}\lesssim_\lambda |I|_{\lambda}.$$

By Proposition\,\ref{s4},
$\|a\|^p_{H_{\lambda}^p(\RR)}=\|P^*a\|^p_{L_{\lambda}^p(\RR)}$. Then we have
\begin{eqnarray*}
\|a\|^p_{H_{\lambda}^p(\RR)}&=&\|P^*a\|^p_{L_{\lambda}^p(\RR)}=\int_{\RR}  \sup_{y>0}|\left(a*_{\lambda}P_y\right)(x)|^p |x|^{2\lambda}dx
\\&=&\int_{I_c\bigcup \widetilde{I}_c\bigcup I_0}   \sup_{y>0}|\left(a*_{\lambda}P_y\right)(x)|^p |x|^{2\lambda}dx    \\
&+& \int_{\left(I_c\bigcup \widetilde{I}_c\bigcup I_0\right)^c}   \sup_{y>0}|\left(a*_{\lambda}P_y\right)(x)|^p |x|^{2\lambda}dx \\
&=&I+II.
\end{eqnarray*}
$\mathbf{CASE}$ 1: when $x\in I_c\bigcup \widetilde{I}_c\bigcup I_0$.\\
By  Holder inequality, Proposition\,\ref{Poisson-conjugate-CR}(iv) and $\|a(x)\|_{L_{\lambda}^{\infty}}\lesssim \frac{1}{|I|_{\lambda}^{1/p}}$,  we could  obtain:
\begin{eqnarray}\label{8}
I&=&\int_{I_c\bigcup \widetilde{I}_c\bigcup I_0}   \sup_{y>0}|\left(a*_{\lambda}P_y\right)(x)|^p |x|^{2\lambda}dx  \nonumber \\
&\leq& \left(\int_{I_c\bigcup \widetilde{I}_c \bigcup I_0}   \sup_{y>0}|\left(a*_{\lambda}P_y\right)(x)|^2 |x|^{2\lambda}dx\right)^{p/2}
\left(\int_{I_c\bigcup \widetilde{I}_c\bigcup I_0}  |x|^{2\lambda}dx\right)^{1-(p/2)} \nonumber \\
&\leq& C.
\end{eqnarray}
$\mathbf{CASE}$ 2: when $x\in \left(I_c\bigcup \widetilde{I}_c\bigcup I_0\right)^c$ .\\
When $x\in \left(I_c\bigcup \widetilde{I}_c\bigcup I_0\right)^c$, we could write $\left(a*_{\lambda}P_y\right)(x)$ as:
$$\left(a*_{\lambda}P_y\right)(x)=c_{\lambda}\int a(t)(\tau_xP_y)(-t)|t|^{2\lambda}dt .$$
The Formula of $\lambda$-Poisson Kernel can be written as
\begin{eqnarray}\label{us3}
(\tau_xP_y)(-t)&=&
\frac{\lambda\Gamma(\lambda+1/2)}{2^{-\lambda-1/2}\pi}\int_0^\pi\frac{y(1+{\rm
sgn}(xt)\cos\theta)
}{\big(y^2+x^2+t^2-2|xt|\cos\theta\big)^{\lambda+1}}\sin^{2\lambda-1}\theta
d\theta
\nonumber\\&=& \frac{\lambda\Gamma(\lambda+1/2)}{2^{-\lambda-1/2}\pi}\int_{-1}^1y\frac{(1+s)(1-s^2)^{\lambda-1}
}{(y^2+x^2+t^2-2xts)^{\lambda+1}}ds.
\end{eqnarray}
For convenience, we use $\langle\cdot ,\cdot\rangle_{y,s}$ to denote as:
$\langle x,t\rangle_{y,s}=y^2+x^2+t^2-2xts$, with $y>0$, $-1\leq s\leq1$.
By the vanish property of the $a(t)$, we could write $\left(a*_{\lambda}P_y\right)(x)$ as,
\begin{eqnarray}\label{us1}
\left(a*_{\lambda}P_y\right)(x)=c_{\lambda}\int a(t)\left((\tau_xP_y)(-t)-(\tau_xP_y)(-t_0)\right)|t|^{2\lambda}dt.
\end{eqnarray}
By the mean value theorem and Formula\,(\ref{Basic-Es0}), we could deduce that:
\begin{eqnarray}\label{us2}
\left|\frac{y}{\langle x,t\rangle_{y,s}^{\lambda+1}}-\frac{y}{\langle x,t_0\rangle_{y,s}^{\lambda+1}}\right|&=&\nonumber\left|y\frac{\langle x,t_0\rangle_{y,s}^{\lambda+1}-\langle x,t\rangle_{y,s}^{\lambda+1}}{\langle x,t\rangle_{y,s}^{\lambda+1}\langle x,t_0\rangle_{y,s}^{\lambda+1}}\right|
\\ &\lesssim& \nonumber \left|y|t-t_0|\frac{\langle x,t_0\rangle_{y,s}^{\lambda+(1/2)}}{\langle x,t_0\rangle_{y,s}^{2\lambda+2}}\right|
\\ &\lesssim&  \left|\frac{|t-t_0|}{\langle x,t_0\rangle_{y,s}^{\lambda+1}}\right|.
\end{eqnarray}
Thus Formulas\,(\ref{us1}, \ref{us2}, \ref{us3}) lead to
\begin{eqnarray}
\left|\left(a*_{\lambda}P_y\right)(x)\right|\leq C |I|_{\lambda}^{1-(1/p)} \int_{-1}^{1} \frac{|t-t_0|}{\left(\langle x,t_0\rangle_s\right)^{\lambda+1}} (1+s)(1-s^2)^{\lambda-1}ds.
\end{eqnarray}
By Proposition\,\ref{estimate a}, we obtain the following
\begin{eqnarray}\label{5}
\int_{-1}^1\frac{(1+s)(1-s^2)^{\lambda-1}
}{(x^2+t_0^2-2xt_0s)^{\lambda+1}}ds
&=&
\left( \frac{1}{x^2+t_0^2}\right)^{\lambda+1}\int_{-1}^1(1+s)(1-s^2)^{\lambda-1}
\left(1-\frac{2xt_0 s}{x^2+t_0^2 }\right)^{-\lambda-1}ds  \nonumber\\
&\leq&
C\frac{1}{(x^2+t_0^2)^{\lambda} (|x|-|t_0|)^2}.
\end{eqnarray}
Thus we could have
\begin{eqnarray}\label{106}
\left|\left(a*_{\lambda}P_y\right)(x)\right|\leq C |I|_{\lambda}^{1-(1/p)} \frac{\delta}{\left||x|-|t_0|\right|^{2}\left||x|+|t_0|\right|^{2\lambda}}
\end{eqnarray}
It is easy to see that
\begin{eqnarray}\label{106*}
|I|_\lambda\lesssim\left(|t_0|+\delta\right)^{2\lambda}\delta\leq\left(|t_0|+|x|\right)^{2\lambda}\delta,\ \ \hbox{for}\,x\in \left(I_c\bigcup \widetilde{I}_c\bigcup I_0\right)^c.
\end{eqnarray}
 By Formula\,(\ref{106}) and Formula\,(\ref{106*}), we could deduce that
\begin{eqnarray}\label{111}
\int_{\left(I_c\bigcup \widetilde{I}_c\bigcup I_0\right)^c} |\left(a*_{\lambda}P_y\right)(x)|^p |x|^{2\lambda}dx&\lesssim_{\lambda}& \left(\delta^{2p-1}\right)\int_{\delta}^{+\infty}\frac{1}{r^{2p}}dr
\nonumber\\&\lesssim_{\lambda,p}&C.
\end{eqnarray}
Thus by Formula\,(\ref{8}) and Formula\,(\ref{111}), we could deduce that:
$$\|a\|^p_{H_{\lambda}^p(\RR)}\lesssim_{\lambda,p} C.$$
By the fact $a(t)\in L_\lambda^2(\RR)$ and Proposition\,\ref{Poisson-conjugate-CR}, we could deduce that
the function $P(a+i\SH_\lambda a)(x, y)$ is a $\lambda$-analytic function. Thus by Proposition\,\ref{s3} and Theorem\,\ref{u2}, we could deduce that
$P(a+i\SH_\lambda a)(x, y)\in H_{\lambda}^p(\RR_+^2)$ for the range of $\frac{1}{1+\gamma_\lambda}<p\leq1$, with $\displaystyle{\gamma_\lambda=\frac{1}{2(2\lambda+1)}}$. This proves the Proposition.
\end{proof}

\begin{proposition}\label{us6}
$a(x)$ is a $p_\lambda$-atom. For $\frac{1}{1+\gamma_\lambda}<p\leq1$, with $\displaystyle{\gamma_\lambda=\frac{1}{2(2\lambda+1)}}$, we could deduce that $\SH_\lambda a(x)\in H_\lambda^p(\RR)$, furthermore we could deduce the following
$$\|\SH_\lambda a\|_{H_\lambda^p(\RR)}\sim\|a\|_{H_\lambda^p(\RR)}.$$
\end{proposition}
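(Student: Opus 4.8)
The plan is to realize $a+i\SH_\lambda a$ as the boundary data of a single $\lambda$-analytic function on $\RR^2_+$ and then transport the already-known norm equivalence in $H_\lambda^p(\RR^2_+)$ down to the boundary. Proposition~\ref{us8} gives precisely this analytic object: for $\frac{1}{1+\gamma_\lambda}<p\le1$ the function
$$F(z)=P(a+i\SH_\lambda a)(x,y)=u(x,y)+iv(x,y),\qquad u=Pa,\quad v=P(\SH_\lambda a),$$
is $\lambda$-analytic and lies in $H_\lambda^p(\RR^2_+)$. Since $v=P(\SH_\lambda a)=(\SH_\lambda a)\ast_\lambda P_y$ and $u=Pa=a\ast_\lambda P_y$, their nontangential maximal functions are exactly $v_{\nabla}^*=P_{\nabla}^*(\SH_\lambda a)$ and $u_{\nabla}^*=P_{\nabla}^* a$. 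This identification is the bridge: Proposition~\ref{uu2*} will compare $\|v_\nabla^*\|_{L_\lambda^p}$ with $\|u_\nabla^*\|_{L_\lambda^p}$ through the single quantity $\|F\|_{H_\lambda^p(\RR^2_+)}$.

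Before invoking the norm machinery I would first check that $\SH_\lambda a$ is a genuine element of $\widetilde H_\lambda^p(\RR)$, that is, $\SH_\lambda a\in L_\lambda^1(\RR)\cap L_\lambda^2(\RR)$, so that Definition~\ref{o1} applies to it directly. The membership $\SH_\lambda a\in L_\lambda^2(\RR)$ is immediate from the $L_\lambda^2$-boundedness of $\SH_\lambda$ (Proposition~\ref{u1} with $p=2$, Plancherel) since the atom is bounded with compact support. For $\SH_\lambda a\in L_\lambda^1(\RR)$ I would use the principal-value representation of Proposition~\ref{Poisson-b}, legitimate because $a\in L_\lambda^1(\RR)\cap L_\lambda^\infty(\RR)$, and split $\RR$ into the near region $I_c\cup\widetilde I_c\cup I_0$ and its complement exactly as in Proposition~\ref{us8}. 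On the near region, whose $\lambda$-measure is comparable to $|I|_\lambda$, Hölder's inequality together with $\SH_\lambda a\in L_\lambda^2(\RR)$ yields $\int|\SH_\lambda a|\,|x|^{2\lambda}dx\lesssim\|\SH_\lambda a\|_{L_\lambda^2}|I|_\lambda^{1/2}<\infty$. On the far region I would use the vanishing moment $\int a(t)|t|^{2\lambda}dt=0$ to replace $h(x,t)$ by $h(x,t)-h(x,t_0)$, and then run the mean-value estimate~(\ref{us2})-type argument together with Proposition~\ref{estimate a}, the estimate~(\ref{5}), and the comparison~(\ref{Basic-Es0}) used for the Poisson kernel.

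With $\SH_\lambda a\in L_\lambda^1(\RR)\cap L_\lambda^2(\RR)$ established, the conclusion follows from a single chain of comparisons. Proposition~\ref{uu2*} applied to $F=u+iv\in H_\lambda^p(\RR^2_+)$ gives $\|v_{\nabla}^*\|_{L_\lambda^p(\RR)}\sim\|u_{\nabla}^*\|_{L_\lambda^p(\RR)}\sim\|F\|_{H_\lambda^p(\RR^2_+)}$; in particular $\|v_{\nabla}^*\|_{L_\lambda^p(\RR)}<\infty$, so by Definition~\ref{o1} the function $\SH_\lambda a$ lies in $\widetilde H_\lambda^p(\RR)\subset H_\lambda^p(\RR)$. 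Using $\|g\|_{H_\lambda^p(\RR)}\sim\|P_\nabla^* g\|_{L_\lambda^p(\RR)}$ (Definition~\ref{o1} and Proposition~\ref{s4}) for $g=\SH_\lambda a$ and for $g=a$ then yields
$$\|\SH_\lambda a\|_{H_{\lambda}^p(\RR)}\sim\|P_{\nabla}^*(\SH_\lambda a)\|_{L_{\lambda}^p(\RR)}=\|v_{\nabla}^*\|_{L_{\lambda}^p(\RR)}\sim\|u_{\nabla}^*\|_{L_{\lambda}^p(\RR)}=\|P_{\nabla}^* a\|_{L_{\lambda}^p(\RR)}\sim\|a\|_{H_{\lambda}^p(\RR)},$$
which is the asserted equivalence.

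I expect the main obstacle to be the far-region $L_\lambda^1(\RR)$ bound for $\SH_\lambda a$. Unlike the $\lambda$-Poisson kernel treated in Proposition~\ref{us8}, the $\lambda$-Hilbert kernel $h(x,t)$ carries the extra factor $(x-t)$, so $h(x,t)\sim|x|^{-2\lambda-1}$ pointwise and is \emph{not} integrable against $|x|^{2\lambda}dx$ without cancellation. The delicate point is to verify that after subtracting $h(x,t_0)$ the extra power coming from $(x-t)$ is exactly compensated: writing $h(x,t)-h(x,t_0)=(x-t)\bigl(K(x,t)-K(x,t_0)\bigr)+(t_0-t)K(x,t_0)$ with $K$ the inner integral of Proposition~\ref{Poisson-b}, the mean-value theorem and the comparison~(\ref{Basic-Es0}) must be controlled uniformly in $s\in[-1,1]$ so that both terms decay like $\delta\,\big||x|-|t_0|\big|^{-2}\big(|x|+|t_0|\big)^{-2\lambda}$, giving a convergent $\int\big||x|-|t_0|\big|^{-2}\,dx$ on the far region. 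The admissible range $\frac{1}{1+\gamma_\lambda}<p\le1$ itself imposes no new restriction here, being inherited directly from Proposition~\ref{us8} and the equivalence of Proposition~\ref{uu2*}; everything else in the argument is routine once $\SH_\lambda a$ is known to lie in $L_\lambda^1(\RR)\cap L_\lambda^2(\RR)$.
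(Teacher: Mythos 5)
Your proposal is correct and follows essentially the same route as the paper: both realize $Pa+iP(\SH_\lambda a)=Pa+iQa$ as a single $\lambda$-analytic element of $H_\lambda^p(\RR_+^2)$ (Proposition~\ref{us8} together with Proposition~\ref{uu2}) and transfer the equivalence $\|v_\nabla^*\|_{L_\lambda^p}\sim\|u_\nabla^*\|_{L_\lambda^p}\sim\|F\|_{H_\lambda^p(\RR_+^2)}$ down to the boundary, the paper performing the rotation $F\mapsto -iF$ by hand where you invoke Proposition~\ref{uu2*}. Your additional verification that $\SH_\lambda a\in L_\lambda^1(\RR)$ via the cancellation estimate on the $\lambda$-Hilbert kernel is a sensible extra step that the paper's terser argument leaves implicit when it appeals to Definition~\ref{o1} and Proposition~\ref{s3}.
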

\begin{proof}Obviously $a(x)\in L_\lambda^2(\RR)$ when $a(x)$ is a $p_\lambda$-atom. Thus by Proposition\,\ref{uu2}, we could deduce that
$$P(\SH_\lambda a)(x, y)+iQ(\SH_\lambda a)(x, y) =Qa(x, y)-iPa(x, y).$$
Thus we could have
$$\|P(\SH_\lambda a)(\cdot, y)+iQ(\SH_\lambda a)(\cdot, y)\|_{H_\lambda^p(\RR_+^2)}=\|Pa(\cdot, y)+iQa(\cdot, y)\|_{H_\lambda^p(\RR_+^2)}.$$
Together with Proposition\,\ref{s3} and Theorem\,\ref{u2}, we could deduce that
$$\|\SH_\lambda a\|_{H_\lambda^p(\RR)}\sim\|P(\SH_\lambda a)(\cdot, y)+iQ(\SH_\lambda a)(\cdot, y)\|_{H_\lambda^p(\RR_+^2)}=\|Pa(\cdot, y)+iQa(\cdot, y)\|_{H_\lambda^p(\RR_+^2)}\sim\|a\|_{H_\lambda^p(\RR)}.$$
This proves the Proposition.
\end{proof}

\begin{proposition}\label{us7}
For $\frac{1}{1+\gamma_\lambda}<p\leq 1$,  $\displaystyle{\gamma_\lambda=\frac{1}{2(2\lambda+1)}}$, for any $f\in H_{\lambda, F}^p(\RR)$, we could deduce that
$$\|\SH_\lambda f\|_{H_\lambda^p(\RR)}\lesssim\|f\|_{H_\lambda^p(\RR)}.$$
\end{proposition}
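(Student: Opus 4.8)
The plan is to exploit linearity of $\SH_\lambda$ together with the subadditivity of $\|\cdot\|_{H_\lambda^p(\RR)}^p$ valid for $0<p\le1$, reducing the statement to the single-atom estimates already established in Proposition~\ref{us6} and Proposition~\ref{us8}. Since any $f\in H_{\lambda,F}^p(\RR)$ is by Definition~\ref{u5} a \emph{finite} sum $f=\sum_{n=1}^N\lambda_n a_n$ of $p_\lambda$-atoms with $\sum_{n=1}^N|\lambda_n|^p\sim\|f\|_{H_\lambda^p(\RR)}^p$, and since each atom lies in $L_\lambda^1(\RR)\cap L_\lambda^2(\RR)$ so that $\SH_\lambda a_n$ is well defined (Proposition~\ref{uu} and Definition~\ref{u3}) and by Proposition~\ref{u6} the whole sum lies in $H_\lambda^p(\RR)\cap L_\lambda^1(\RR)\cap L_\lambda^2(\RR)$, no convergence issue arises and I may write $\SH_\lambda f=\sum_{n=1}^N\lambda_n\,\SH_\lambda a_n$.

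First I would record that $\|\cdot\|_{H_\lambda^p(\RR)}^p$ is subadditive for $0<p\le1$, a fact built into Definition~\ref{o1} (where it is noted to be a norm in this range). Applying it to the finite sum gives
$$\|\SH_\lambda f\|_{H_\lambda^p(\RR)}^p=\Big\|\sum_{n=1}^N\lambda_n\,\SH_\lambda a_n\Big\|_{H_\lambda^p(\RR)}^p\le\sum_{n=1}^N|\lambda_n|^p\,\|\SH_\lambda a_n\|_{H_\lambda^p(\RR)}^p.$$
Next I would apply Proposition~\ref{us6} to each summand, giving $\|\SH_\lambda a_n\|_{H_\lambda^p(\RR)}^p\sim\|a_n\|_{H_\lambda^p(\RR)}^p$, and then Proposition~\ref{us8}, which bounds $\|a_n\|_{H_\lambda^p(\RR)}^p$ by a constant $C$ depending only on $\lambda$ and $p$ but \emph{not} on the individual atom. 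Combining the two yields $\|\SH_\lambda a_n\|_{H_\lambda^p(\RR)}^p\lesssim_{\lambda,p}C$ uniformly in $n$.

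Finally I would substitute this uniform bound back into the subadditivity estimate and invoke the norm equivalence built into $H_{\lambda,F}^p(\RR)$ (equivalently Proposition~\ref{s4}):
$$\|\SH_\lambda f\|_{H_\lambda^p(\RR)}^p\lesssim_{\lambda,p}\sum_{n=1}^N|\lambda_n|^p\sim\|f\|_{H_\lambda^p(\RR)}^p,$$
and take $p$-th roots to conclude $\|\SH_\lambda f\|_{H_\lambda^p(\RR)}\lesssim\|f\|_{H_\lambda^p(\RR)}$.

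The genuine analytic content has already been absorbed into Propositions~\ref{us8} and~\ref{us6}, so the only points that require care here are formal: verifying that the constant in Proposition~\ref{us8} is uniform over all $p_\lambda$-atoms (this uniformity is exactly what makes the per-atom bounds collapse into the clean sum $\sum_n|\lambda_n|^p$), and that restricting to the finite class $H_{\lambda,F}^p(\RR)$ permits the use of linearity of $\SH_\lambda$ without any limiting argument. The latter is precisely why the proposition is phrased for $H_{\lambda,F}^p(\RR)$ rather than all of $H_\lambda^p(\RR)$: the extension to the full space is to be carried out separately by density, once the uniform bound on this dense subclass is in hand.
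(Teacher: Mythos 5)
Your proposal is correct and follows essentially the same route as the paper's own proof: decompose $f$ into its finite atomic sum, use the $p$-subadditivity of $\|\cdot\|_{H_\lambda^p(\RR)}^p$ together with linearity of $\SH_\lambda$, and then invoke Proposition~\ref{us6} and the uniform atom bound of Proposition~\ref{us8} to control each term by $|\lambda_n|^p$. Your added remarks on the uniformity of the atom constant and on why the statement is restricted to the finite class $H_{\lambda,F}^p(\RR)$ are accurate but not points where the paper's argument differs.
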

\begin{proof}
For any $f\in H_{\lambda, F}^p(\RR)$, we could write $f$ as
\begin{eqnarray*}
f(x)=\sum_{n=1}^N\lambda_{n} a_n(x)\ in\ H_{\lambda}^p(\RR)\ spaces, where\  \|f\|_{H_{\lambda}^p(\RR)}^p\sim\sum_{n=1}^N|\lambda_n|^p<\infty,\,and\,\{a_n(x)\}\, are\, p_\lambda-atoms.
\end{eqnarray*}
Thus by Proposition\,\ref{us8} and Proposition\,\ref{us6}, we could deduce that
\begin{eqnarray*}
\|\SH_\lambda f\|_{H_{\lambda}^p(\RR)}^p\lesssim \sum_{n=1}^{N} |\lambda_n|^p\|\SH_\lambda a_n\|_{H_{\lambda}^p(\RR)}^p\lesssim \sum_{n=1}^{N} |\lambda_n|^p\sim\| f\|_{H_{\lambda}^p(\RR)}^p.
\end{eqnarray*}
This proves the Proposition.
\end{proof}
By Proposition\,\ref{u6} and Proposition\,\ref{us7}, we could obtain our main theorem:
\begin{theorem}\label{us9}
For $\frac{1}{1+\gamma_\lambda}<p\leq 1$,  $\displaystyle{\gamma_\lambda=\frac{1}{2(2\lambda+1)}}$, $\lambda$-Hilbert transform can be extended as a bounded operator from $H_{\lambda}^p(\RR)$ to $H_{\lambda}^p(\RR)$:
$$\|\SH_\lambda f\|_{H_{\lambda}^p(\RR)}\lesssim\|f\|_{H_{\lambda}^p(\RR)},\ \ \hbox{for\,any\,}f\in H_{\lambda}^p(\RR).$$

\end{theorem}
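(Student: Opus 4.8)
The plan is to obtain Theorem~\ref{us9} as a routine density-and-continuity consequence of the two preceding propositions, with all of the analytic substance already carried out in Propositions~\ref{us8}, \ref{us6} and \ref{us7}. First I would record that $\SH_\lambda$ is a linear operator on the space $H_{\lambda,F}^p(\RR)$ of finite atomic combinations, and that by Proposition~\ref{us7} it is bounded there: there is a constant $C=C(\lambda,p)$ with
$$\|\SH_\lambda g\|_{H_\lambda^p(\RR)}\leq C\|g\|_{H_\lambda^p(\RR)},\qquad g\in H_{\lambda,F}^p(\RR).$$
Since $H_{\lambda,F}^p(\RR)\subset H_\lambda^p(\RR)\cap L_\lambda^1(\RR)\cap L_\lambda^2(\RR)$ by Proposition~\ref{u6}, on this subspace $\SH_\lambda$ is precisely the operator introduced in Definition~\ref{u3}, so no ambiguity of meaning arises.

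Next I would invoke the density of $H_{\lambda,F}^p(\RR)$ in $H_\lambda^p(\RR)$ from Proposition~\ref{u6} together with the completeness of $H_\lambda^p(\RR)$, which holds by construction, since $H_\lambda^p(\RR)$ is defined in Definition~\ref{o1} as the completion of $\widetilde{H}_\lambda^p(\RR)$ with respect to $\|\cdot\|_{H_\lambda^p(\RR)}^p$. Given $f\in H_\lambda^p(\RR)$, I would choose $f_n\in H_{\lambda,F}^p(\RR)$ with $f_n\to f$. For $0<p\leq1$ the quantity $\|\cdot\|_{H_\lambda^p(\RR)}^p$ is $p$-subadditive, so the boundedness above yields
$$\|\SH_\lambda f_n-\SH_\lambda f_m\|_{H_\lambda^p(\RR)}^p=\|\SH_\lambda(f_n-f_m)\|_{H_\lambda^p(\RR)}^p\leq C^p\|f_n-f_m\|_{H_\lambda^p(\RR)}^p,$$
and since $\{f_n\}$ is Cauchy, so is $\{\SH_\lambda f_n\}$; by completeness it converges in $H_\lambda^p(\RR)$, and I would define the limit to be $\SH_\lambda f$. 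A standard argument shows the limit is independent of the approximating sequence, so the extension is well defined and linear, and passing to the limit in the estimate $\|\SH_\lambda f_n\|_{H_\lambda^p(\RR)}\leq C\|f_n\|_{H_\lambda^p(\RR)}$ (using continuity of the quasi-norm) gives $\|\SH_\lambda f\|_{H_\lambda^p(\RR)}\leq C\|f\|_{H_\lambda^p(\RR)}$, which is the asserted bound.

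The only point requiring genuine care is that, for $p<1$, $H_\lambda^p(\RR)$ is merely a complete quasi-normed (in fact $p$-normed) space rather than a Banach space, so throughout I would work with the $p$-homogeneous functional $\|\cdot\|_{H_\lambda^p(\RR)}^p$ and the metric it induces, appealing to extension by uniform continuity in that metric setting rather than to the Banach-space bounded-linear-transformation theorem. Here one must also ensure that the constant in Proposition~\ref{us7} is genuinely uniform over all finite atomic sums and does not degrade under the summation estimate; granting that, which is exactly the content of Proposition~\ref{us7}, the extension and the final inequality follow at once. I expect no further obstacle, because the substantive atomic estimates $\|a\|_{H_\lambda^p(\RR)}^p\lesssim_{\lambda,p}C$ and $\|\SH_\lambda a\|_{H_\lambda^p(\RR)}\sim\|a\|_{H_\lambda^p(\RR)}$ for $p_\lambda$-atoms have already been established in Propositions~\ref{us8} and \ref{us6}.
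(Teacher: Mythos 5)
Your proposal is correct and follows exactly the route the paper takes: the paper's entire proof of Theorem~\ref{us9} is the one-line observation that it follows from Proposition~\ref{u6} (density of $H_{\lambda,F}^p(\RR)$) and Proposition~\ref{us7} (uniform bound on that dense class), and you simply supply the standard extension-by-uniform-continuity details in the $p$-normed setting. Your explicit attention to completeness and to the $p$-subadditivity of $\|\cdot\|_{H_{\lambda}^p(\RR)}^p$ is a faithful (and somewhat more careful) rendering of what the paper leaves implicit.
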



\begin{thebibliography}{}






\bibitem{dJ}  M.\,F.\,E. de Jeu, The Dunkl transform, \it Invent. Math. \bf 113\rm(1993), 147-162.

\bibitem{Du3} C.\,F. Dunkl, Differential-difference  operators  associated  to reflection groups,
                \it Trans. Amer. Math. Soc. \bf 311\rm(1989), 167-183.



\bibitem{Du6} C.\,F. Dunkl, Hankel transforms associated to finite reflection groups, in ``Proc. of the special session on hypergeometric
               functions on domains of positivity, Jack polynomials and applications (Tampa,
                1991)", \it Contemp. Math. \bf 138\rm(1992), 123-138.







\bibitem{BGS} D.L.Burkholder, R.F.Gundy, and M.L.Silverstein, The maximal function characterization of the class $H^p(\RR)$, \it Trans. Amer. Math. Soc, \bf157\rm(1971), 137-153.




\bibitem{ZhongKai Li 1}Zh-K Li, Hardy Spaces for Jacobi expansions, Analysis \bf16\rm(1996), 27-49.


\bibitem{ZhongKai Li 2}Zh-K Li and J-Q Liao, Hardy Spaces For Dunkl-Gegenbauer Expansions, J.Funct.Anal.\bf265\rm(2013), 687-742.

\bibitem{ZhongKai Li 3}Zh-K Li and J-Q Liao, Harmonic Analysis Associated with One-dimensional Dunkl Transform, \it J.Approx.\bf37\rm(2013), 233-281.


\bibitem{Ro1} M. R\"osler, Bessel-type signed hypergroups on $\RR$, in ``Probability measures on groups and related structures XI", Edited
               by H. Heyer and A. Mukherjea, World Scientific, Singapore, 1995, \it pp. \rm 292-304.



\bibitem{MS}  B. Muckenhoupt and E.\,M. Stein, Classical expansions and their relation to conjugate harmonic functions, \it Trans. Amer. Math.
                Soc. \bf 118\rm(1965), 17-92.




\bibitem{MS2} R. Macias and C. Segovia, A decomposition into atoms of distributions on spaces of homogeneous type, \it Adv. in Math.
               \bf 33\rm(1979), 271-309.


\bibitem{hu}ZhuoRan Hu, Hardy spaces  associated with One-dimensional Dunkl transform for  $\frac{2\lambda}{2\lambda+1}<p\leq1$, submitted\, (preprint).



\end{thebibliography}
\end{document}